\def\mathcal{\mathscr}
\newtheorem{thm}{Theorem}[section]
\newtheorem{lem}[thm]{Lemma}
\newtheorem{cor}[thm]{Corollary}
\newtheorem{prop}[thm]{Proposition}
\newtheorem{conj}[thm]{Conjecture}
\theoremstyle{definition}
\newtheorem{rem}[thm]{Remark}
\newtheorem{claim}[thm]{Claim}
\newtheorem{ex}[thm]{Example}
\newcommand{\mca}[1]{{\mathcal{#1}}}
\def\ad{\text{\rm ad}}
\def\Z{{\mathbb Z}}
\def\C{{\mathbb C}}
\def\R{{\mathbb R}}
\def\len{\text{\rm len}}
\def\CF{\text{\rm CF}}
\def\CrV{\text{\rm CrV}}
\def\CrP{\text{\rm CrP}}
\def\CM{\text{\rm CM}}
\def\CZ{\text{\rm CZ}}
\def\ev{\text{\rm ev}}
\def\HF{\text{\rm HF}}
\def\HM{\text{\rm HM}}
\def\hor{\text{\rm hor}}
\def\HZ{\text{\rm HZ}}
\def\id{\text{\rm id}}
\def\Im{\text{\rm Im}\,}
\def\ind{\text{\rm ind}}
\def\interior{\text{\rm int}}
\def\lin{\text{\rm lin}}
\def\neg{\text{\rm neg}}
\def\Per{\text{\rm Per}\,}
\def\Spec{\text{\rm Spec}\,}
\def\supp{\text{\rm supp}}
\def\ver{\text{\rm ver}}
\begin{document}
\pagestyle{plain}
\thispagestyle{plain}

\title[Hofer-Zehnder capacity of unit disk cotangent bundles and the loop product]
{Hofer-Zehnder capacity of unit disk cotangent bundles and the loop product}

\author[Kei Irie]{Kei Irie}
\address{Department of Mathematics, Faculty of Science, Kyoto University,
Kyoto 606-8502, Japan}
\email{iriek@math.kyoto-u.ac.jp}

\subjclass[2010]{Primary:53D40, Secondary:70H12}
\date{\today}

\begin{abstract}
We prove the finiteness of the Hofer-Zehnder capacity of unit disk cotangent bundles of closed Riemannian manifolds, 
under some simple topological assumptions on the manifolds. 
The key ingredient of the proof is a computation of the pair-of-pants product on Floer homology of cotangent bundles. 
We reduce it to a simple computation of the loop product, making use of results of 
A. Abbondandolo - M. Schwarz.
\end{abstract}

\maketitle

\section{Introduction}
\subsection{Main result}
This paper concerns the Hofer-Zehnder capacity of unit disk cotangent bundles of closed Riemannian manifolds. 
First we recall the definition of the Hofer-Zehnder capacity, following \cite{HZ}.
Let $(X,\omega)$ be a symplectic manifold, possibly with boundary. 
For any Hamiltonian $H \in C^\infty(X)$, its \textit{Hamiltonian vector field} $X_H \in \mca{X}(X)$ is defined by 
$\omega(X_H, \, \cdot \,) = -dH(\,\cdot\,)$. 
$H \in C^\infty(X)$ is called \textit{nice} if and only if it satisfies the following conditions:
\begin{itemize}
\item There exists a compact set $K \subset \interior X:= X \setminus \partial X$ such that 
$H \equiv \max H$ on $X \setminus K$. 
\item There exists a nonempty open set $U \subset X$ such that $H \equiv 0$ on $U$. 
\item $H(x) \ge 0$ for any $x \in M$.
\item Any nonconstant periodic orbit of $X_H$ has a period strictly larger than $1$.
\end{itemize}
Then, the Hofer-Zehnder capacity $c_{\HZ}(X,\omega)$ is defined as 
\[
c_{\HZ}(X,\omega):= \sup \bigl\{ \max H \bigm{|} \text{$H$ is nice} \bigr\}.
\]

To explain our main result and the idea of its proof, we fix some notations. 
Let $M$ be a closed oriented Riemannian manifold. 

\begin{itemize}
\item $\Lambda_M$ denotes the Hilbert manifold of free loops $S^1(:=\R/\Z) \to M$ of Sobolev class $W^{1,2}$ (i.e. 
loops with square integrable derivatives).
\item A homotopy class of free loops on $M$ is \textit{trivial} if it consists of contractible loops. 
\item Let $\alpha$ be a homotopy class of free loops on $M$, and $a>0$. Let us define 
\begin{align*}
\Lambda^{\alpha}_M&:= \{ \gamma \in \Lambda_M \mid [\gamma]=\alpha\}, \qquad
\Lambda^{<a}_M:= \{ \gamma \in \Lambda_M \mid \len(\gamma)<a \}, \\
\Lambda^{<a,\alpha}_M&:= \Lambda^{<a}_M \cap \Lambda^{\alpha}_M.
\end{align*}
$\len(\gamma)$ denotes the length of $\gamma$.
Obviously, $\Lambda^{<a}_M, \Lambda^\alpha_M$ are open sets in $\Lambda_M$.
\item $DT^*M$ denotes the unit disk cotangent bundle of $M$, and $ST^*M$ denotes its boundary, i.e.
\[
DT^*M:=\bigl\{(q,p) \in T^*M \bigm{|} |p| \le 1 \bigr\}, \quad
ST^*M:=\bigl\{(q,p) \in T^*M \bigm{|} |p|=1 \bigr\}.
\]
\item $\pi_M$ denotes the natural projection map $T^*M \to M; (q,p) \mapsto q$. 
\item $\lambda_M \in \Omega^1(T^*M)$, $\omega_M \in \Omega^2(T^*M)$, $Z_M \in \mca{X}(T^*M)$ is defined as
\begin{align*}
\lambda_M(\xi)&:= p(d\pi_M(\xi))\, \bigl((q,p) \in T^*M, \, \xi \in T_{(q,p)}(T^*M) \bigr), \\
\omega_M&:=d\lambda_M, \\
i_{Z_M}\omega_M&:= \lambda_M.
\end{align*}
\item For any periodic orbit $\gamma$ of a certain vector field, its period is denoted as $\Per(\gamma)$. 
\end{itemize}

Our main result is the following: 

\begin{thm}\label{thm:main}
Let $M$ be a closed oriented Riemannian manifold. 
Let $\alpha$ be a nontrivial homotopy class of free loops on $M$. 
Suppose that the evaluation map 
$\ev: \Lambda^{\alpha}_M \to M; \gamma \mapsto \gamma(0)$ has a smooth section $s$
(i.e. $s: M \to \Lambda^{\alpha}_M$ such that $\ev \circ s = \id_M$).
Then $c_{\HZ}(DT^*M, \omega_M) < \infty$. 
\end{thm}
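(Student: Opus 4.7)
The plan is to combine two main ingredients: (i) the Abbondandolo--Schwarz isomorphism between $\HF_*(T^*M)$ and $H_*(\Lambda_M)$, together with its multiplicative refinement matching the pair-of-pants product with the Chas--Sullivan loop product; and (ii) a spectral invariant argument of Hofer--Zehnder type. The idea is that the section $s$ will produce an infinite sequence of non-vanishing Floer classes whose spectral action grows only linearly, which is incompatible with having a nice Hamiltonian of arbitrarily large maximum on $DT^*M$.

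First I would use $s$ to construct a family of non-zero loop space homology classes with controlled length. Set $A:=s_*[M]\in H_n(\Lambda^\alpha_M)$; this is non-zero because $s_*$ splits $\ev_*$. Pointwise iterated concatenation $s^{*k}(x):=s(x)*s(x)*\cdots*s(x)$ ($k$ copies) is again a section of $\ev$ into $\Lambda^{\alpha^k}_M$, so $A_k:=(s^{*k})_*[M]\in H_n(\Lambda^{\alpha^k}_M)$ is non-zero for every $k\ge 1$. Setting $L:=\sup_{x\in M}\len(s(x))$, each $A_k$ admits a representative lying in $\Lambda^{<kL,\alpha^k}_M$, and a chain-level computation of the Chas--Sullivan product on ``sectional'' classes identifies $A_k$ with the $k$-fold loop product $A^{*k}$. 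Via Abbondandolo--Schwarz, the $A_k$ correspond to non-zero Floer homology classes $\tilde A_k$ on $T^*M$ (for a suitable fiberwise convex Hamiltonian) with action filtration $\le kL$, and the compatibility of pair-of-pants with loop product---the paper's main technical input---gives $\tilde A_k=\tilde A_1^{*k}$.

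Next I would run the Hofer--Zehnder argument by contradiction. Suppose $H$ is nice on $DT^*M$ with $\max H=D$ very large. Extend $H$ to a Hamiltonian $\bar H$ on $T^*M$ that equals $D$ on a neighborhood of $ST^*M$ and is fiberwise convex at infinity, so that $\HF_*(T^*M,\bar H)\cong H_*(\Lambda_M)$. The subadditivity of spectral invariants under the pair-of-pants product yields $c(\tilde A_k,\bar H)\le k\cdot c(\tilde A_1,\bar H)$, giving linear growth in $k$. On the other hand, since $\bar H\equiv D$ on a nonempty open set (where $X_{\bar H}\equiv 0$, all orbits are constant, with action essentially $-D$), a min--max/PSS-type lower bound of the form $c(\tilde A_k,\bar H)\ge kD - C$ should hold, unless a genuine nonconstant $1$-periodic orbit of $X_H$ inside $DT^*M$ carries the spectral value. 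For $D$ sufficiently larger than $L$ these two inequalities are incompatible unless such a nonconstant orbit exists; but that orbit would have period $\le 1$, contradicting niceness of $H$. Hence $D$ is bounded, proving $c_{\HZ}(DT^*M,\omega_M)<\infty$.

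The main obstacle is the third step: calibrating the extension $\bar H$ and the two spectral estimates so that the orbit produced really lies in the interior of $DT^*M$, where the niceness condition applies, rather than being an artifact of the modification at infinity; Floer-theoretic maximum principles and an appropriate choice of slope at infinity should take care of this. The second step---the chain-level identification of the pair-of-pants and Chas--Sullivan products with matching action/length filtrations---is also delicate, but one may invoke the work of Abbondandolo and Schwarz. The first step is essentially topological and relies only on the section hypothesis.
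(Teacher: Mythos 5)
Your proposal recognizes the right ingredients (Abbondandolo--Schwarz isomorphism, compatibility of pair-of-pants with the loop product, spectral invariants), but the combinatorial core of your argument differs from the paper's and contains a genuine gap.

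The paper's pivotal observation (Lemma \ref{lem:key}) is that $s_*[M]\circ\bar{s}_*[M]=c_*[M]$, where $\bar{s}(p)$ is the \emph{time-reversed} loop $\overline{s(p)}$. The point is that $\alpha$ and $\bar{\alpha}=\alpha^{-1}$ multiply to the trivial free-homotopy class, so this single loop product lands on the fundamental class $c_*[M]\in H_n(\Lambda_M)$, which under Theorem \ref{thm:loop} corresponds to $F_\infty=\iota_\infty(1)$. That is what makes the spectral argument close: Proposition \ref{prop:criterion} pins down the spectral invariant of the \emph{unit} class $F$ to be $-\min H$ (via Lemma \ref{lem:Morse}), while the two factors $x\in\HF^\alpha_n$, $y\in\HF^{\bar\alpha}_n$ have small spectral invariants because every $1$-periodic orbit in a nontrivial class must be nonconstant, hence (assuming no short orbit inside) lives in the cylindrical end where the action is bounded by $S(\nu)$. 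Subadditivity of $\rho$ under $*$ then yields the contradiction.

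Your argument iterates $s$ instead of pairing $s$ with $\bar{s}$, producing classes $A_k\in H_n(\Lambda^{\alpha^k}_M)$. Since $\alpha^k$ is never trivial (unless $\alpha$ is torsion, which is not assumed), $\tilde A_k$ is \emph{not} the unit class, so there is no PSS/Morse-theoretic reason for the lower bound $c(\tilde A_k,\bar H)\ge kD-C$ to hold. Quite the opposite: under your standing assumption that $H$ has no short nonconstant orbit inside $DT^*M$, all $1$-periodic orbits in the class $\alpha^k$ live outside $DT^*M$ and have uniformly bounded (small) action, so $\rho$ for $\tilde A_k$ would be small, not of order $kD$. The contradiction you aim for therefore does not materialize. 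There is also a secondary imprecision in the subadditivity step: the inequality $\rho(H*K:x*y)\le\rho(H:x)+\rho(K:y)$ changes the Hamiltonian from $\bar H$ to the concatenation $\bar H * \bar H$ (roughly $2\bar H$), so one cannot iterate to get $c(\tilde A_k,\bar H)\le k\,c(\tilde A_1,\bar H)$ for a \emph{fixed} $\bar H$; the Hamiltonian must be scaled along with $k$. The fix that makes everything click is exactly the paper's Lemma \ref{lem:key}: replace iteration by the single product $s_*[M]\circ\bar s_*[M]$, landing on $F_\infty$, and then a single application of subadditivity together with Proposition \ref{prop:criterion} suffices. Your step (iii), the calibration of the extension so that the detected orbit really lies inside $DT^*M$, is indeed delicate and corresponds to the convexity lemmas (Lemma \ref{lem:convexity}, \ref{lem:convexity3}) and Step 3 of the proof of Proposition \ref{prop:criterion}.
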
 

The following corollary is immediate from Theorem \ref{thm:main}.

\begin{cor}\label{cor:circle}
Let $M$ be a closed oriented Riemannian manifold.
Suppose that $M$ admits a smooth $S^1$ action $S^1 \times M \to M$ 
such that $S^1$ orbit $\gamma_p: S^1 \to M; t \mapsto t \cdot p$ is not contractible for any $p \in M$. 
Then $c_{\HZ}(DT^*M,\omega_M) <\infty$. 
\end{cor}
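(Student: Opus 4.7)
The plan is to use the $S^1$-action to directly construct the smooth section required by Theorem~\ref{thm:main}, for the free homotopy class $\alpha$ of any orbit. The argument is essentially a matter of unpacking definitions.

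First, I would define the candidate section $s : M \to \Lambda_M$ by $s(p) := \gamma_p$, where $\gamma_p(t) := t \cdot p$ is the $S^1$-orbit through $p$. Each $\gamma_p$ is smooth, hence of Sobolev class $W^{1,2}$, so $s$ lands in $\Lambda_M$. Smoothness of $s$ as a map between Hilbert manifolds is a standard consequence of smoothness of the action $S^1 \times M \to M$. Working on each connected component of $M$ separately (since $DT^*M$ decomposes accordingly), the continuous family $\{\gamma_p\}_{p \in M}$ lies in a single path component of $\Lambda_M$; let $\alpha$ denote the corresponding free homotopy class. By hypothesis, no $\gamma_p$ is contractible, so $\alpha$ is nontrivial, and $s$ in fact takes values in $\Lambda^\alpha_M$.

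Finally, one checks $\ev \circ s = \id_M$: indeed, $\ev(s(p)) = \gamma_p(0) = 0 \cdot p = p$, using that $0 \in S^1 = \R/\Z$ is the identity of the group. Thus $s$ satisfies the hypothesis of Theorem~\ref{thm:main}, and we conclude $c_{\HZ}(DT^*M, \omega_M) < \infty$. I expect no genuine obstacle here; the $S^1$-action hypothesis is precisely calibrated to manufacture the section demanded by the main theorem, so the corollary really is immediate once one records that evaluation at $0 \in S^1$ provides a canonical left inverse to the orbit map.
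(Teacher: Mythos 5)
Your proposal is correct and is exactly the paper's argument: define $s(p) := \gamma_p$, note it is a smooth section of $\ev$ landing in a single nontrivial class $\alpha = [\gamma_p]$ (after reducing to $M$ connected), and invoke Theorem~\ref{thm:main}. The paper's proof is just a terser version of what you wrote.
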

\begin{proof}
We may assume that $M$ is connected. 
Set $\alpha:=[\gamma_p]$.
Then, 
$s: M \to \Lambda^\alpha_M; p \mapsto \gamma_p$ 
satisfies the assumption of Theorem \ref{thm:main}.
\end{proof}

\subsection{Idea of the proof}
We briefly explain an idea to prove Theorem \ref{thm:main}.
We use some terminologies without their definitions.
Their definitions are explained in section 2. 

For any homotopy class $\alpha$ of free loops on $M$, one can define 
\textit{Floer homology} $\HF_*^{\alpha}(DT^*M)$ of a Liouville domain $(DT^*M, \lambda_M)$. 
$\bigoplus_{\alpha} \HF_*^{\alpha}(DT^*M)$ (where $\alpha$ runs over all homotopy classes of free loops) is abbreviated as $\HF_*(DT^*M)$. 
It satisfies the following properties ($n:=\dim M$):
\begin{itemize}
\item There exists a natural homomorphism 
$\iota_\infty: H^{n-*}(DT^*M) \to \HF_*(DT^*M)$. $\iota_\infty(1) \in \HF_n(DT^*M)$ is denoted as $F_\infty$. 
\item There exists a natural product structure (the \textit{pair-of-pants product}) on Floer homology: 
$\HF_i(DT^*M) \otimes \HF_j(DT^*M) \to \HF_{i+j-n}(DT^*M); x \otimes y \mapsto x*y$.
\end{itemize}

The following facts are established in \cite{AS1}, \cite{AS2}, \cite{SW}, \cite{Viterbo}:
\begin{enumerate}
\item[(a):] For any $\alpha$, there exists a natural isomorphism $\HF^{\alpha}_*(DT^*M) \cong H_*(\Lambda^{\alpha}_M)$. 
\item[(b):] The isomorphism in (a) 
 interwines the pair of pants product on $\HF_*(DT^*M)$ with the \textit{loop product} on $H_*(\Lambda_M)$. 
\end{enumerate}
In section 3, we recall the definition of the loop product, and state the above results in more precise form (Theorem \ref{thm:loop}).

The key ingredient of the proof is a simple computation of the loop product (Lemma \ref{lem:key}).
Combined with the facts (a), (b), 
Lemma \ref{lem:key} implies the following (Corollary \ref{cor:key}): 
\begin{enumerate}
\item[(c):] Under the asssumptions of Theorem \ref{thm:main}, there exist $x \in \HF_n^{\alpha}(DT^*M)$ and 
$y \in \HF_n(DT^*M)$ such that $x*y = F_\infty \in \HF_n(DT^*M)$.
\end{enumerate}
Theorem \ref{thm:main} is proved by 
(c) and 
a criterion for Hamiltonians to have nonconstant periodic orbits (Proposition \ref{prop:criterion}), 
which is based on the theory of spectral invariants. 

\subsection{Organization of the paper}
In section 2, we recall Floer theory on Liouville domains. 
We define truncated Floer homology of Liouville domains, and define the pair-of-pants product on Floer homology. 
Section 3 concerns Floer homology of cotangent bundles. We recall the definition of the loop product, and state the above facts (a), (b) 
in more precise form 
(Theorem \ref{thm:loop}). 
Although Theorem \ref{thm:loop} is essentially established in \cite{AS1} and \cite{AS2}, it requires some technical arguments to deduce it 
from results in those papers in a rigorous manner. 
These arguments are rather technical, hence they are postponed until section 6.
In section 4, we introduce the notion of spectral invariants, and establish a criterion 
for Hamiltonians to have nonconstant periodic orbits (Proposition \ref{prop:criterion}).
In section 5, we prove our main result, Theorem \ref{thm:main}.
In section 6, we prove Theorem \ref{thm:loop}.
In section 7, we discuss a quantitative refinement of our main result. 

\textbf{Acknowledgement.}
The author would like to appreciate his advisor professor Kenji Fukaya
for precious comments. 

\section{Floer theory on Liouville domains}
In this section, we recall Floer theory on Liouville domains. 
In section 2.1, we recall basic objects (Liouville domains, Hamiltonians, almost complex structures) and 
prove some convexity results for solutions of the Floer equations (Lemma \ref{lem:convexity}, Lemma \ref{lem:convexity3}).
In section 2.2, 
first we define truncated Floer homology of (admissible) Hamiltonians. 
After that, we define truncated Floer homology of Liouville domains. 
In section 2.3, we define the pair-of-pants product on truncated Floer homology. 

\subsection{Preliminaries}
\subsubsection{Liouville domains}

\textit{Liouville domain} is a pair $(X, \lambda)$, where $X$ is a $2n$-dimensional compact manifold with boundary, $\lambda \in \Omega^1(X)$ such that 
$d\lambda$ is a symplectic form on $X$, and 
$\lambda \wedge (d\lambda)^{n-1}>0$ on $\partial X$. 
\textit{Liouville vector field} $Z \in \mca{X}(X)$ is defined as $i_Z(d\lambda)=\lambda$. 
It is easy to show that $Z$ points strictly outwards on $\partial X$.
For any Liouville domain $(X,\lambda)$, 
$(\partial X, \lambda)$ is a contact manifold. 
We define $\Spec(X,\lambda)$ by 
\[
\Spec (X,\lambda):= \biggl\{ \int_\gamma \lambda \biggm{|} \text{$\gamma$ is a periodic Reeb orbit on $(\partial X, \lambda)$} \biggr\}.
\]
Obviously, $\Spec(X,\lambda) \subset (0,\infty)$. Moreover, 
it is well-known that $\Spec (X,\lambda)$ is closed and nowhere dence in $\R$. 

Let $I: \partial X \times (0,1] \to X$ be an embedding defined by 
\[
I(z,1)=z, \qquad \partial_rI(z,r) = r^{-1}Z\bigl(I(z,r)\bigr).
\]
It is easy to check that $I^*\lambda(z,r)=r \lambda(z)$ for any $(z,r) \in \partial X \times (0,1]$.

Define a manifold $\hat{X}$ by 
\[
\hat{X}:= X \cup_I \partial X \times (0,\infty), 
\]
and $\hat{\lambda} \in \Omega^1(\hat{X})$ by 
\[
\hat{\lambda}(x):= \begin{cases}
                   \lambda(x) &( x \in X), \\
                   r\lambda(z) &\bigl( x= (z,r) \in \partial X \times (0,\infty) \bigr). 
                   \end{cases}
\]
$(\hat{X}, \hat{\lambda})$ is called the \textit{completion} of $(X,\lambda)$. 
$d\hat{\lambda}$ is a symplectic form on $\hat{X}$.
For each $r>0$, $X(r)$ denotes the bounded domain in $\hat{X}$ with boundary $\partial X \times \{r\}$, i.e.
\[
X(r):=\begin{cases}
      X \cup \partial X \times [1,r] &( r \ge 1), \\
      X \setminus \partial X \times (r,1] &( r<1).
      \end{cases}
\]

\begin{ex}\label{ex:TM}
When $M$ is a closed Riemannian manifold, $(DT^*M, \lambda_M)$ is a Liouville domain.
There exists a unique diffeomorphism $\varphi: \widehat{DT^*M} \to T^*M$, such that $\varphi^*\lambda_M = \hat{\lambda_M}$ and 
$\varphi|_{DT^*M}$ is the inclusion $DT^*M \to T^*M$. 
Hence we identify $(T^*M,\lambda_M)$ with the completion of $(DT^*M,\lambda_M)$.
\end{ex}

\subsubsection{Hamiltonians}

For $H \in C^\infty(S^1 \times \hat{X})$, $H_t \in C^\infty(\hat{X})$ is defined by $H_t(x):=H(t,x)$. 
$\mca{P}(H)$ denotes the set of $1$-periodic orbits of $(X_{H_t})_{t \in S^1}$, i.e.
\[
\mca{P}(H):= \{x: S^1 \to \hat{X} \mid X_{H_t}(x(t))= \partial_tx(t) \}.
\]
$H$ is \textit{nondegenerate} when all orbits in $\mca{P}(H)$ are nondegenerate.
$H$ is \textit{linear at $\infty$} when there exist $a_H>0$, $b_H \in \R$ and $r_0 \ge 1$ such that 
$H_t(z,r)=a_Hr+b_H$ for any $t \in S^1$, $z \in \partial X$, $r \ge r_0$.
$H$ is \textit{admissible} when it is nondegenerate and linear at $\infty$.
We denote the set of admissible Hamiltonians by $\mca{H}_{\ad}(X,\lambda)$.
Notice that any $H \in \mca{H}_{\ad}(X,\lambda)$ satisfies 
$a_H \notin \Spec(X,\lambda)$, since otherwise $\mca{P}(H)$ contains infinitely many degenerate orbits.

\subsubsection{Almost complex structures}

Let $J$ be an almost complex structure on $\hat{X}$. 
$J$ is \textit{compatible with $d\hat{\lambda}$} when 
\[
g_J: TM \otimes TM \to \R; \qquad v \otimes w \mapsto d\hat{\lambda}(v,Jw)
\]
is a Riemannian metric (we denote $g_J(v,v)^{1/2}$ as $|v|_J$).
$\mca{J}(\hat{X},\hat{\lambda})$ denotes the set of almost complex structures on $\hat{X}$, which are compatible with $d\hat{\lambda}$.

Let $I \subset (0,\infty)$ be an interval. 
$J$ is \textit{of contact type on} $\partial X \times I$, when 
$dr \circ J(z,r)= -\lambda(z)$ for any $(z,r) \in \partial X \times I$.
If $J$ is of contact type on $\partial X \times (r_0,\infty)$ for some $r_0$, 
$J$ is \textit{of contact type at} $\infty$.
A family of almost complex structures $(J_a)_{a \in A}$ is of contact type on $\partial X \times I$ 
when each $J_a$ is of contact type on $\partial X \times I$.

\subsubsection{Convexity results}
We prove the following convexity results, which are necessary to develop Floer theory 
on Liouville domains.

\begin{lem}\label{lem:convexity}
Let $(X,\lambda)$ be a Liouville domain, 
$(H_{s,t})_{(s,t) \in \R \times S^1}$ be a family of Hamiltonians on $\hat{X}$, and 
$(J_{s,t})_{(s,t) \in \R \times S^1}$ be a family of elements in $\mca{J}(\hat{X},\hat{\lambda})$. 
Suppose that there exists $r_0 >0$, such that the following holds:
\begin{itemize}
\item There exist $a,b \in C^\infty(\R)$ such that 
$H_{s,t}(z,r)=a(s)r+b(s)$ on $\partial X \times [r_0,\infty)$, and $a'(s) \ge 0$ for any $s \in \R$.
\item For any $(s,t) \in \R \times S^1$, $J_{s,t}$ is of contact type on $\partial X \times [r_0, \infty)$.
\end{itemize}
Under these assumptions, if $u: \R \times S^1 \to \hat{X}$ satisfies the Floer equation 
$\partial_s u - J_{s,t}(\partial_t u- X_{H_{s,t}}(u)) = 0$ and 
$u^{-1}\bigl(\partial X \times (r_0, \infty) \bigr)$ is bounded, 
then $u(\R \times S^1) \subset X(r_0)$.
\end{lem}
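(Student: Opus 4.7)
The plan is to apply a maximum-principle argument to the auxiliary function $\rho := r\circ u$ on the open set $V := u^{-1}(\partial X \times (r_0,\infty))$, which is bounded in $\R \times S^1$ by hypothesis. If $V = \emptyset$ there is nothing to prove, so assume otherwise. By continuity $\rho = r_0$ on the topological boundary of $V$, and the task reduces to producing an elliptic inequality for $\rho$ on $V$ with coefficients that are bounded on $V$. The weak maximum principle will then force $\sup_V \rho \le r_0$, contradicting $\rho > r_0$ on $V$ and yielding $V = \emptyset$, i.e.\ $u(\R \times S^1) \subset X(r_0)$.

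For the key computation, note first that on the cylindrical end $\hat\lambda = r\lambda$, so $d\hat\lambda = dr\wedge\lambda + r\,d\lambda$; combined with the Reeb identities $i_R\lambda = 1$, $i_R d\lambda = 0$, this forces $X_{H_{s,t}} = a(s)\, R$. The Floer equation together with the contact-type relation $dr \circ J_{s,t} = -\lambda$ then yields
\[
\partial_s\rho = a(s) - \lambda(\partial_t u), \qquad \partial_t\rho = \lambda(\partial_s u),
\]
and hence, by $d(u^*\lambda) = u^*(d\lambda)$, the identity $\Delta\rho = a'(s) - d\lambda(\partial_s u,\partial_t u)$. To control the curvature term, I would compute $d\hat\lambda(\partial_s u,\partial_t u)$ in two ways: once from the Floer equation and the compatibility of $J_{s,t}$ with $d\hat\lambda$, giving $-a(s)\partial_s\rho - |\partial_s u|_{J_{s,t}}^2$, and once from the expansion $d\hat\lambda = dr\wedge\lambda + r\,d\lambda$, giving a linear expression in $d\lambda(\partial_s u,\partial_t u)$, $\partial_s\rho$, and $\partial_t\rho$. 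Equating and solving yields
\[
\Delta\rho - \frac{2a(s)}{r}\partial_s\rho \;=\; a'(s) + \frac{1}{r}\bigl(|\partial_s u|_{J_{s,t}}^2 - (\partial_s\rho)^2 - (\partial_t\rho)^2\bigr).
\]

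To see the right-hand side is nonnegative, I would observe that the contact-type condition together with $J_{s,t}^2 = -\id$ and compatibility with $d\hat\lambda$ rigidly determines $J_{s,t}$ along the $\R\partial_r \oplus \R R$ subspace: $J_{s,t}\partial_r = R$, $J_{s,t}R = -\partial_r$, and $J_{s,t}$ preserves the contact distribution $\xi := \ker\lambda$. With respect to the corresponding $g_{J_{s,t}}$-orthogonal splitting $\R\partial_r \oplus \R R \oplus \xi$, the parenthesized expression reduces to $|(\partial_s u)^\xi|_{J_{s,t}}^2 \ge 0$, where $(\partial_s u)^\xi$ is the $\xi$-component of $\partial_s u$. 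Combined with the monotonicity $a'(s) \ge 0$, this gives the elliptic inequality $\Delta\rho - \tfrac{2a(s)}{r}\partial_s\rho \ge 0$ on $V$. Since $V$ is bounded and $r > r_0 > 0$ throughout $V$, the coefficient $2a(s)/r$ is bounded on $V$, so the weak maximum principle for subsolutions of a linear elliptic operator of the form $\Delta + b(s,t)\partial_s$ applies and concludes the argument.

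The main obstacle is the sign-tracking in the two-way computation of $d\hat\lambda(\partial_s u,\partial_t u)$: the various quadratic pieces must assemble so that the cross-terms $(\partial_s\rho)^2 + (\partial_t\rho)^2$ are exactly cancelled by the $\partial_r$ and $R$ components of $|\partial_s u|_{J_{s,t}}^2$, leaving only the manifestly nonnegative $\xi$-component. This cancellation is precisely what the contact-type hypothesis on $J_{s,t}$ produces, and it is also where the monotonicity $a'(s) \ge 0$ must enter as an additive nonnegative contribution rather than as an error term to be absorbed---any weakening of either hypothesis would destroy the sign structure and force a different argument.
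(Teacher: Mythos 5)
Your maximum-principle argument is correct, but it is a genuinely different route from the paper's proof, which argues by a Stokes-type contradiction: after picking a regular level $r_1>r_0$ of $r\circ u$, the paper sets $D:=u^{-1}\bigl(\partial X\times[r_1,\infty)\bigr)$, writes $0<\int_D|\partial_s u|^2_{J_{s,t}}\,ds\,dt$, and shows by integration by parts and the boundary orientation that this integral is $\le 0$. Both are standard ways to get $C^0$-estimates on Liouville completions; yours is pointwise and local (a differential inequality for $\rho:=r\circ u$), the paper's is integral and global. The Stokes version avoids the rigidity-of-$J$ lemma and elliptic PDE theory at the cost of a Sard/transversality step to choose $r_1$; your version avoids that step but must identify the elliptic operator explicitly. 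Two small points about your write-up. First, the rigidity of a contact-type compatible $J$ (that $J\partial_r=R$, $JR=-\partial_r$, and $J\xi=\xi$) is correct, but it uses the \emph{symmetry} of $g_J$ and not merely $J^2=-\id$ together with $dr\circ J=-\lambda$: symmetry is what forces the a priori possible $\xi$-components of $J\partial_r$ and $JR$ to vanish, and this deserves to be spelled out. Second, with the paper's conventions $\omega(X_H,\cdot)=-dH$ and $g_J(v,w)=d\hat\lambda(v,Jw)$, one has $d\hat\lambda(\partial_s u, X_{H_{s,t}})=dH_{s,t}(\partial_s u)=+a(s)\partial_s\rho$, so the Floer-equation computation gives $d\hat\lambda(\partial_s u,\partial_t u)=a(s)\partial_s\rho-|\partial_s u|^2_{J_{s,t}}$ rather than $-a(s)\partial_s\rho-|\partial_s u|^2_{J_{s,t}}$. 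The $a(s)\partial_s\rho$ terms then cancel, and the identity has no first-order drift:
\[
\Delta\rho \;=\; a'(s)\;+\;\frac{1}{r}\Bigl(|\partial_s u|^2_{J_{s,t}}-(\partial_s\rho)^2-(\partial_t\rho)^2\Bigr)\;\ge\;0.
\]
This only streamlines your argument: the weak maximum principle for subharmonic functions on the bounded open set $u^{-1}\bigl(\partial X\times(r_0,\infty)\bigr)$ then suffices, with no need to discuss bounded drift coefficients.
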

\begin{proof}
If $u(\R \times S^1)$ is not contained in $X(r_0)$, then there exists $r_1>r_0$ such that 
$u(\R \times S^1)$ is not contained in $X(r_1)$, and $u$ is transversal to $\partial X \times \{r_1\}$.
Then, $D:=u^{-1}\bigl(\partial X \times [r_1,\infty) \bigr)$ is a compact surface with boundary, and 
\[
0 < \int_D |\partial_s u|_{J_{s,t}}^2 ds dt =\int_D d\hat{\lambda}(\partial_t u - X_{H_{s,t}}(u), \partial_s u)\, dsdt 
= \int_D dH_{s,t}(\partial_s u)\, dsdt - u^*(d\hat{\lambda}).
\]
On the otherhand, if $u(s,t) \in \partial X \times [r_0,\infty)$, 
\[
dH_{s,t}(\partial_s u) =a(s) \partial_sr(s,t)
\le a(s)\partial_sr(s,t) + a'(s) r(s,t)
= \partial_s(a(s)r(s,t))
= \partial_s\bigl(\hat{\lambda}(X_{H_{s,t}}(u)) \bigr).
\]
Hence we get
\[
\int_D dH_{s,t}(\partial_s u)\, dsdt - u^*(d\hat{\lambda})
\le \int_D \partial_s\bigl(\hat{\lambda}(X_{H_{s,t}}(u)) \bigr)\, ds dt - u^*(d\hat{\lambda})
= \int_{\partial D} \hat{\lambda}(X_{H_{s,t}}(u) dt - du). 
\]
We can compute the right hand side as follows
($j$ denotes the almost complex structure on $\R \times S^1$ which is defined by $j(\partial_s)=\partial_t$):
\[
\int_{\partial D} \hat{\lambda}(X_{H_{s,t}} (u)\,dt - du) 
= \int_{\partial D} \hat{\lambda}(J_{s,t} \circ (X_{H_{s,t}}(u)\, dt - du)) \circ j  
= r_1 \int_{\partial D} dr(X_{H_{s,t}}(u)\,dt - du) \circ j.
\]
The first equality follows from the Floer equation, 
the second equality holds since $J_{s,t}$ is of contact type on $\partial X \times [r_0,\infty)$ and
$u(\partial D) \subset \partial X \times \{r_1\}$.
Finally, 
\[
\int_{\partial D} dr(X_{H_{s,t}}(u)\,dt - du) \circ j <0.
\]
This is because $dr(X_{H_{s,t}})=0$ on $\partial X \times \{r_1\}$, and 
$dr\bigl(du(jV)\bigr)>0$ when $V$ is a vector tangent to $\partial D$, positive with respect to the boundary orientation.
Hence we get a contradiction.
\end{proof}

\begin{lem}\label{lem:convexity3}
Let $(X,\lambda)$ be a Liouville domain, 
$(H_{s,t})_{(s,t) \in \R \times S^1}$ be a family of Hamiltonians on $\hat{X}$, and 
$(J_{s,t})_{(s,t) \in \R \times S^1}$ be a family of elements in $\mca{J}(\hat{X},\hat{\lambda})$. 
Suppose that $\partial_sH_{s,t}(x) \ge 0$ for any $(s,t) \in \R \times S^1$ and $x \in \hat{X}$, and there exist $0<r_0<r_1$ with the
following properties:
\begin{itemize}
\item There exist $a,b \in C^\infty(\R)$ such that $H_{s,t}(z,r)=a(s)r+b(s)$ on $\partial X \times [r_0,r_1]$, and $b'(s) \le 0$ for any $s \in \R$.
\item For any $(s,t) \in \R \times S^1$, $J_{s,t}$ is of contact type on $\partial X \times [r_0,r_1]$.
\end{itemize}
Under these assumptions, if $u:\R \times S^1 \to \hat{X}$ satisfies the Floer equation $\partial_s u-J_{s,t}(\partial_t u - X_{H_{s,t}}(u))=0$
and $u^{-1}(\partial X \times (r_0,\infty))$ is bounded, then $u(\R \times S^1) \subset X(r_0)$.
\end{lem}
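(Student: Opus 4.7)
The plan is to adapt the proof of Lemma \ref{lem:convexity} to the finite-slab setting. The two features that differ are that the conical form $H_{s,t}(z,r)=a(s)r+b(s)$ is only known on $\partial X\times[r_0,r_1]$ (rather than on the whole end $\partial X\times[r_0,\infty)$), and that the slope monotonicity $a'(s)\ge 0$ is replaced by the global condition $\partial_s H_{s,t}\ge 0$ together with $b'(s)\le 0$. The idea is to reorganize the Stokes computation so that the conical structure is invoked only on $\partial D$ (where $u$ is at a single level $r=r_0'\in(r_0,r_1)$), rather than throughout $D$; this sidesteps any need to control what $u$ does in the region $r>r_1$.

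Arguing by contradiction: assume $u(\R\times S^1)\not\subset X(r_0)$ and, by Sard's theorem, pick $r_0'\in(r_0,r_1)$ with $u$ transverse to $\partial X\times\{r_0'\}$ and $u$ exceeding $r_0'$ somewhere. Put $D:=u^{-1}\bigl(\partial X\times[r_0',\infty)\bigr)$; by the boundedness assumption $D$ is compact, with $\partial D=u^{-1}\bigl(\partial X\times\{r_0'\}\bigr)$ a nonempty compact $1$-manifold. The Floer equation yields
\[
\int_D |\partial_s u|_{J_{s,t}}^2\,ds\,dt \;=\; \int_D dH_{s,t}(\partial_s u)\,ds\,dt - \int_D u^*d\hat\lambda.
\]
Writing $dH_{s,t}(\partial_s u)=\partial_s\bigl(H_{s,t}(u)\bigr)-(\partial_s H_{s,t})(u)$, using $\partial_s H_{s,t}\ge 0$ to discard the second term, and applying Stokes to $d(H_{s,t}(u)\,dt)=\partial_s(H_{s,t}(u))\,ds\wedge dt$ and to $u^*d\hat\lambda=d(u^*\hat\lambda)$, one reduces to
\[
\int_D |\partial_s u|_{J_{s,t}}^2\,ds\,dt \;\le\; \int_{\partial D}\bigl(H_{s,t}(u)\,dt - u^*\hat\lambda\bigr).
\]

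On $\partial D$, the values of $u$ lie in the conical slice $\partial X\times\{r_0'\}\subset\partial X\times[r_0,r_1]$, so $H_{s,t}(u)=a(s)r_0'+b(s)$ and $\hat\lambda=r_0'\lambda$. Repeating the calculation at the end of the proof of Lemma \ref{lem:convexity}, based on the contact-type identity $dr\circ J_{s,t}=-\lambda$ together with the Floer equation, one verifies that along $\partial D$
\[
u^*\lambda \;=\; a(s)\,dt \;+\; dr\bigl(du\circ j\bigr),
\]
whence $H_{s,t}(u)\,dt - u^*\hat\lambda = b(s)\,dt - r_0'\,dr(du\circ j)$ on $\partial D$. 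A second application of Stokes gives $\int_{\partial D} b(s)\,dt=\int_D b'(s)\,ds\wedge dt \le 0$ thanks to $b'\le 0$, while $\int_{\partial D} dr(du\circ j)>0$ strictly, since the inward normal $jV$ at $\partial D$ points into the region of larger $r$ and hence $dr(du(jV))>0$ pointwise for $V$ positively tangent. Combined with $r_0'>0$ this forces $\int_D |\partial_s u|_{J_{s,t}}^2 <0$, a contradiction.

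The main obstacle is precisely the finiteness of the conical slab: in the proof of Lemma \ref{lem:convexity}, the key step replaces $dH_{s,t}(\partial_s u)$ by $\partial_s\bigl(\hat\lambda(X_{H_{s,t}}(u))\bigr)$, which requires the conical form of $H_{s,t}$ throughout $D$. The trick here is that the stronger global hypothesis $\partial_s H_{s,t}\ge 0$ permits the simpler substitution $dH_{s,t}(\partial_s u)\le \partial_s(H_{s,t}(u))$, which is a total $s$-derivative and needs no conical structure in the interior; the conical form is then only invoked on $\partial D$, and the extra boundary term involving $b(s)$ is absorbed by the sign condition $b'\le 0$ via a second Stokes argument.
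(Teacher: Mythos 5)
Your proposal is correct and takes essentially the same approach as the paper's proof: both pick a regular level $r_2\in(r_0,r_1)$, use $\partial_s H_{s,t}\ge 0$ to replace $dH_{s,t}(\partial_s u)$ by the total $s$-derivative $\partial_s(H_{s,t}(u))$ rather than by $\partial_s(\hat\lambda(X_{H_{s,t}}(u)))$, then Stokes to $\partial D$ where the conical form holds, separate the $b(s)$ contribution (handled by $b'\le 0$) from the $dr(du\circ j)$ contribution (strictly positive by transversality), and conclude by contradiction. The only difference is cosmetic: where the paper cites ``the same argument as in Lemma \ref{lem:convexity}'' for the boundary term, you unpack that contact-type computation explicitly into the identity $u^*\lambda = a(s)\,dt + dr(du\circ j)$ along $\partial D$.
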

\begin{proof}
If $u(\R \times S^1)$ is not contained in $X(r_0)$, there exists $r_2 \in (r_0,r_1)$ such that 
$u(\R \times S^1)$ is not contained in $X(r_2)$, and $u$ is transversal to $\partial X \times \{r_2\}$. 
Then, $D:=u^{-1}\bigl(\partial X \times [r_2,\infty)\bigr)$ is a compact surface with boundary, and 
\[
0< \int_D |\partial_s u|_{J_{s,t}}^2 + \partial_sH_{s,t}(u) \, dsdt
=\int_{\partial D} H_{s,t}(u)\,dt - u^*\hat{\lambda}.
\]
When $(s,t) \in \partial D$, 
$H_{s,t}\bigl(u(s,t)\bigr)=a(s)r_2+b(s)$ and 
$\hat{\lambda}(X_{H_{s,t}}(u(s,t))\bigr)=a(s)r_2$.
Therefore
\[
\int_{\partial D} \hat{\lambda}(X_{H_{s,t}}(u)) - H_{s,t}(u)\,dt = -\int_{\partial D} b(s)\,dt 
=-\int_D b'(s)\, dsdt \ge 0.
\]
Hence we get $0 < \int_{\partial D}\hat{\lambda}(X_{H_{s,t}}(u) dt - du)$, 
but we can show the opposite inequality by exactly same arguments as in the proof of Lemma \ref{lem:convexity}.
\end{proof}

\subsection{Truncated Floer homology of Liouville domains}
In section 2.2.1, we define truncated Floer homology of admissible Hamiltonians, and introduce the \textit{monotonicity homomorphism}.
In section 2.2.2, we define truncated Floer homology of Liouville domains, by taking a direct limit with respect to the monotonicity homomorphism.

\subsubsection{Truncated Floer homology of admissible Hamiltonians}
Let $(X,\lambda)$ be a Liouville domain, 
$H \in \mca{H}_{\ad}(X,\lambda)$, $\alpha$ be a homotopy class of free loops on $X$, 
and $I \subset \R$ be a nonempty interval. 
We develop our arguments under the following assumption:
\begin{quote}
We are given $\Z$-valued Conley-Zehnder index $\ind_{\CZ}: \mca{P}(H) \to \Z$. 
\end{quote}
\begin{rem}
In general (when $c_1(TX) \ne 0$), it is impossible to assign $\Z$-valued Conley-Zehnder index to periodic orbits of Hamiltonian vector fields on $(X,\lambda)$.
Even when $c_1(TX)=0$, there is no canonical way to assign $\Z$-valued Conley-Zehnder index to noncontractible orbits. 
However, when $(X,\lambda)=(DT^*M,\lambda_M)$ where $M$ is an oriented Riemannian manifold, there exists a canonical way 
to assign $\Z$-valued Conley-Zehnder index to periodic orbits of Hamiltonian vector fields on $(X,\lambda)$
(see Lemma 1.2, Lemma 1.3 in \cite{AS1}). 
In the proof of our main result Theorem \ref{thm:main}, 
we only deal with the case $(X,\lambda)=(DT^*M,\lambda_M)$, where $M$ is a closed 
oriented Riemannian manifold.  
Hence, in this paper, it is enough to develop our arguments under the above assumption. 
\end{rem}

For each integer $k$,
$\CF^{I,\alpha}_k(H)$ denotes a free $\Z_2$ module generated over
\[
\{ x \in \mca{P}(H) \mid \ind_{\CZ}(x)=k, \mca{A}_H(x) \in I, [x]=\alpha \},
\]
where $\mca{A}_H(x)$ is defined by 
\[
\mca{A}_H(x):= \int_{S^1} x^*\hat{\lambda} - H_t\bigl(x(t)\bigr)\, dt. 
\]
$\CF^\alpha_k(H)$ abbreviates $\CF^{\R,\alpha}_k(H)$.

\begin{rem}
Throughout this paper, we work on $\Z_2$ - coefficient homology.
\end{rem}

Let $J=(J_t)_{t \in S^1}$ be a family of elements in $\mca{J}(\hat{X},\hat{\lambda})$, which is of contact type at $\infty$. 
For $x,y \in \mca{P}(H)$, define $\hat{\mca{M}}(x,y)$ as follows
($u(s)$ donotes $S^1 \to \hat{X}; t \mapsto u(s,t)$):
\begin{align*}
\hat{\mca{M}}(x,y):=\bigl\{u:\R \times S^1 \to \hat{X} &\mid \partial_s u - J_t\bigl(\partial_t u - X_{H_t}(u)\bigr)=0, \\
&\qquad \lim_{s \to -\infty} u(s) =x, \, \lim_{s \to \infty} u(s)=y \bigr\}.
\end{align*}
Notice that one can define a natural $\R$ action on $\hat{\mca{M}}(x,y)$ by shifting trajectories in the $s$-variable. 
$\mca{M}(x,y)$ denotes its quotient $\hat{\mca{M}}(x,y)/\R$.
For generic $J=(J_t)_{t \in S^1}$, the following holds: 
$\mca{M}(x,y)$ is a smooth manifold of dimension $\ind_\CZ(x) - \ind_{\CZ}(y)-1$.
When $\dim \mca{M}(x,y)=0$, $\mca{M}(x,y)$ is compact (hence is a finite point set).
Moreover, 
\[
\partial_{H,J}: \CF^\alpha_*(H) \to \CF^\alpha_{*-1}(H); \qquad [x] \mapsto \sum_y \sharp \mca{M}(x,y) \cdot [y]
\]
satisfies $\partial_{H,J}^2=0$. 
These claims are proved by the usual transversality and glueing arguments, combined with a $C^0$-estimate for solutions of the Floer equation
(Lemma \ref{lem:convexity}). 
The homology group of $(\CF^\alpha_*(H), \partial_{H,J})$ does not depend on $J$, and is denoted as $\HF^\alpha_*(H)$. 
It is called \textit{Floer homology} of $H$. 

It is easy to check that for any $x,y \in \mca{P}(H)$ and $u \in \hat{\mca{M}}(x,y)$, there holds 
\[
\mca{A}_H(x) - \mca{A}_H(y) = \int_{\R \times S^1} |\partial_s u(s,t)|_{J_t}^2 \, dsdt \ge 0.
\]
In particular, $\mca{M}(x,y) \ne \emptyset \implies \mca{A}_H(x) \ge \mca{A}_H(y)$. 
Hence for any nonempty interval $I \subset \R$, $\bigl(\CF_*^{I,\alpha}(H), \partial_{H,J})$ is a chain complex. 
Its homology group $\HF^{I,\alpha}_*(H)$ is called \textit{truncated Floer homology} of $H$. 
We introduce some abbreviations:
\[
\HF^{<a,\alpha}_*(H):= \HF^{(-\infty,a),\alpha}_*(H), \quad
\HF^I_*(H):= \bigoplus_\alpha \HF^{I,\alpha}_*(H), \quad
\HF_*(H):=\HF^\R_*(H).
\]

We define $I_+, I_- \subset \R$ as
$I_+:=(-\infty,\inf I\,]\cup I$, 
$I_-:=I_+ \setminus I$.
The following statements are immediate from the definition: 
\begin{itemize}
\item 
For any nonempty intervals $I, I' \subset \R$ such that $I_\pm \subset I'_\pm$, 
there exists a natural homomorphism $\Phi^{II'}_H:\HF^{I,\alpha}_*(H) \to \HF^{I',\alpha}_*(H)$.
\item
For any $-\infty \le a<b<c \le \infty$, there holds the following long exact sequence:
\[
\xymatrix{
\cdots\ar[r]&\HF_*^{[a,b),\alpha}(H)\ar[r]&\HF_*^{[a,c),\alpha}(H)\ar[r]&\HF_*^{[b,c),\alpha}(H)\ar[r]&\HF_{*-1}^{[a,b),\alpha}(H)\ar[r]&\cdots \\
}.
\]
\end{itemize}

Next we introduce the \textit{monotonicity homomorphism}. 

\begin{prop}\label{prop:monotonicity}
Let $H, H' \in \mca{H}_{\ad}(X,\lambda)$ and assume that $a_H \le a_{H'}$.
Notice that
\[
\Delta:=\int_{S^1} \max (H_t - H'_t) \, dt < \infty.
\]
Let $I, I' \subset \R$ be nonempty intervals which satisfy
$I_\pm + \Delta \subset I'_\pm$, and $\alpha$ be a homotopy class of free loops on $X$. 
Then, there exists a natural homomorphism 
$\Phi^{II'}_{HH'}:\HF_*^{I,\alpha}(H) \to \HF_*^{I',\alpha}(H')$.
Moreover, there holds the following properties:
\begin{itemize}
\item When $H=H'$, $\Phi^{II'}_{HH}$ coincides with $\Phi^{II'}_H$.
\item Suppose that $H, H', H'' \in \mca{H}_{\ad}(X)$ satisfy $a_H \le a_{H'} \le a_{H''}$, and 
let $I, I', I'' \subset \R$ be nonempty intervals. 
Then, the following diagram commutes if the homomorphisms $\Phi^{II'}_{HH'}$, $\Phi^{II''}_{HH''}$, $\Phi^{I'I''}_{H'H''}$ are defined. 
\[
\xymatrix{
&\HF_*^{I',\alpha}(H')\ar[rd]^{\Phi^{I'I''}_{H'H''}}& \\
\HF_*^{I,\alpha}(H)\ar[ru]^{\Phi^{II'}_{HH'}}\ar[rr]_{\Phi^{II''}_{HH''}}&&\HF_*^{I'',\alpha}(H'')
}.
\]
\end{itemize}
\end{prop}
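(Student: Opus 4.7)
The plan is to construct $\Phi^{II'}_{HH'}$ by counting solutions of a Floer equation with an $s$-dependent Hamiltonian interpolating between $H$ and $H'$, following the standard monotone-homotopy procedure. First, choose a smooth cutoff $\beta:\R \to [0,1]$ with $\beta' \ge 0$, $\beta \equiv 0$ for $s \le 0$, $\beta \equiv 1$ for $s \ge 1$, and set $H_{s,t}:=(1-\beta(s))H_t+\beta(s)H'_t$. At infinity one has $H_{s,t}(z,r)=a(s)r+b(s)$ with $a(s)=(1-\beta(s))a_H+\beta(s)a_{H'}$, and the assumption $a_H \le a_{H'}$ gives $a'(s) \ge 0$, so the hypothesis of Lemma \ref{lem:convexity} applies. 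Take a generic family $(J_{s,t})$ in $\mca{J}(\hat{X},\hat{\lambda})$ of contact type at infinity which is $s$-independent for $|s|$ large. For $x\in \mca{P}(H)$, $y\in \mca{P}(H')$ in the class $\alpha$ with $\ind_\CZ(x)=\ind_\CZ(y)$, the (generically zero-dimensional and finite, thanks to the $C^0$-bound from Lemma \ref{lem:convexity}) moduli space of $s$-dependent Floer trajectories from $x$ to $y$ defines a chain map
\[
\phi_{HH'}:\CF^\alpha_*(H) \to \CF^\alpha_*(H').
\]

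The key step is the action estimate. For any such Floer trajectory $u$,
\[
\mca{A}_H(x)-\mca{A}_{H'}(y)=\int_{\R\times S^1}|\partial_s u|^2_{J_{s,t}}\,ds\,dt+\int_{\R\times S^1}(\partial_s H_{s,t})(u)\,ds\,dt,
\]
and since $\partial_s H_{s,t}=\beta'(s)(H'_t-H_t)$, the second integral is bounded below by $-\int\beta'(s)\max_{\hat{X}}(H_t-H'_t)\,ds\,dt=-\Delta$. Hence $\mca{A}_{H'}(y)\le \mca{A}_H(x)+\Delta$. The hypothesis $I_\pm+\Delta \subset I'_\pm$ unpacks to $\sup I+\Delta \le \sup I'$ and $\inf I+\Delta \le \inf I'$, so $\phi_{HH'}$ sends $\CF^{I_+,\alpha}_*(H)$ into $\CF^{I'_+,\alpha}_*(H')$ and the subcomplex $\CF^{I_-,\alpha}_*(H)$ into $\CF^{I'_-,\alpha}_*(H')$. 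Thus $\phi_{HH'}$ descends to a chain map of quotient complexes, and the induced map on homology is the desired $\Phi^{II'}_{HH'}$.

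Independence of the choice of $(\beta, J_{s,t})$ follows from the standard homotopy-of-homotopies argument: a generic two-parameter family of admissible data joins any two valid choices and produces a chain homotopy, which respects the truncations by the same action estimate applied pointwise in the parameter. The identity $\Phi^{II'}_{HH}=\Phi^{II'}_H$ when $H=H'$ is obtained by taking the constant homotopy (so $\Delta=0$) and noting the resulting chain map agrees with the inclusion-induced map on the quotient complexes up to the canonical continuation identification. The composition identity is proved by concatenating monotone homotopies $H\rightsquigarrow H'\rightsquigarrow H''$ separated by a long neck and deforming to a single monotone homotopy $H\rightsquigarrow H''$; a standard neck-stretching/gluing argument identifies the glued moduli count with $\Phi^{I'I''}_{H'H''}\circ \Phi^{II'}_{HH'}$, while homotopy-of-homotopies identifies it with $\Phi^{II''}_{HH''}$.

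The main technical obstacle is arranging all the homotopies and two-parameter deformations within the class covered by Lemma \ref{lem:convexity} (linear at infinity with nondecreasing slope) while retaining enough flexibility for generic transversality, and then carefully auditing the action estimate along every such family to ensure all chain homotopies descend to the truncated complexes with precisely the stated index shifts. Once the admissible-data spaces are set up correctly, the remaining steps are routine gluing and transversality in the spirit of \cite{AS1}.
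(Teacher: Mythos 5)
Your proposal is correct and takes essentially the same approach as the paper: the paper simply invokes the standard monotone-homotopy construction (citing Schwarz, p.~431) and notes that the needed $C^0$-bound is supplied by Lemma \ref{lem:convexity}, which is exactly the argument you carry out in detail. Your action estimate $\mca{A}_{H'}(y)\le\mca{A}_H(x)+\Delta$ and the observation that the linearly interpolated slope $a(s)=(1-\beta(s))a_H+\beta(s)a_{H'}$ is nondecreasing (so Lemma \ref{lem:convexity} applies) are precisely the two points the paper's one-line proof is pointing to.
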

\begin{proof} 
The proof is almost same as the case of closed aspherical symplectic manifolds (see \cite{Schwarz} pp.431).
The only difference is that we need a $C^0$-estimate for Floer trajectories, and it follows from Lemma \ref{lem:convexity}.
\end{proof}

The homomorphism $\Phi^{II'}_{HH'}$ defined in Proposition \ref{prop:monotonicity} is called the \textit{monotonicity homomorphism}.
The following corollary is immediate from Proposition \ref{prop:monotonicity}.

\begin{cor}\label{cor:monotonicity}
Let $(X,\lambda)$ be a Liouville domain, $\alpha$ be a homotopy class of free loops on $X$, 
and $H, H' \in \mca{H}_\ad(X,\lambda)$. 
\begin{enumerate}
\item If $a_H=a_{H'}$, there exists a natural isomorphism $\Phi_{HH'}: \HF^\alpha_*(H) \to \HF^\alpha_*(H')$.
\item If $H_t \le H'_t$ for any $t \in S^1$, there exists a natural homomorphism 
$\HF^{I,\alpha}_*(H) \to \HF^{I,\alpha}_*(H')$ for any nonempty interval $I$.
\end{enumerate}
\end{cor}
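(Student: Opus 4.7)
The plan is to derive both statements as direct applications of Proposition \ref{prop:monotonicity} for suitable choices of intervals. The only content is in verifying the hypotheses of that proposition in each case and then invoking its compatibility properties.

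For (1), I would apply Proposition \ref{prop:monotonicity} with $I = I' = \R$. Since $\R_+ = \R$ and $\R_- = \emptyset$, the required inclusion $I_\pm + \Delta \subset I'_\pm$ holds for any $\Delta$. The equality $a_H = a_{H'}$ gives both $a_H \le a_{H'}$ and $a_{H'} \le a_H$, so the proposition supplies monotonicity maps $\Phi^{\R\R}_{HH'}: \HF^\alpha_*(H) \to \HF^\alpha_*(H')$ and $\Phi^{\R\R}_{H'H}$ in the reverse direction. Applying the composition property of Proposition \ref{prop:monotonicity} with $H'' = H$ yields $\Phi^{\R\R}_{H'H} \circ \Phi^{\R\R}_{HH'} = \Phi^{\R\R}_{HH}$, which by the first bullet of the proposition coincides with $\Phi^{\R\R}_H$, the identity on $\HF^\alpha_*(H)$. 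The symmetric argument shows the opposite composition is also the identity, so $\Phi_{HH'} := \Phi^{\R\R}_{HH'}$ is the desired isomorphism.

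For (2), the pointwise inequality $H_t \le H'_t$ has two consequences. First, comparing the linear ends $H_t(z,r) = a_Hr + b_H$ and $H'_t(z,r) = a_{H'}r + b_{H'}$ as $r \to \infty$ forces $a_H \le a_{H'}$. Second, directly from the definition,
\[
\Delta = \int_{S^1} \max(H_t - H'_t)\,dt \le 0.
\]
For any nonempty interval $I \subset \R$, one checks from the definition that $I_+ = (-\infty, \inf I] \cup I$ is a downward ray (of the form $(-\infty, \sup I]$ or $(-\infty, \sup I)$), and $I_- = I_+ \setminus I$ is likewise a downward ray. Since $\Delta \le 0$, shifting either ray by $\Delta$ keeps it inside itself, so $I_\pm + \Delta \subset I_\pm$. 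Taking $I' = I$ in Proposition \ref{prop:monotonicity} then produces the desired homomorphism $\HF^{I,\alpha}_*(H) \to \HF^{I,\alpha}_*(H')$.

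The corollary is essentially a bookkeeping consequence of Proposition \ref{prop:monotonicity}, so there is no genuine obstacle. The only points that require attention are the elementary verifications above: that $H_t \le H'_t$ forces $a_H \le a_{H'}$ on the linear ends and makes $\Delta \le 0$, and that each of $I_+, I_-$ is a downward ray so that shifting by a nonpositive number leaves it invariant.
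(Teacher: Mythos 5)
Your proposal is correct and takes precisely the approach the paper intends: the paper simply declares the corollary ``immediate from Proposition \ref{prop:monotonicity}'', and your argument supplies the routine verification of the hypotheses (the two directions plus composition property for (1); the inference $a_H \le a_{H'}$, $\Delta \le 0$, and $I_\pm + \Delta \subset I_\pm$ for (2)). Nothing further is needed.
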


\subsubsection{Truncated Floer homology of Liouville domains}

Let $(X,\lambda)$ be a Liouville domain, 
$\alpha$ be a homotopy class of free loops on $X$, and 
$I \subset \R$ be a nonempty interval. 
Setting $\mca{H}^\neg_\ad(X,\lambda):=\{ H \in \mca{H}_\ad(X,\lambda) \mid  H|_{S^1 \times X} <0 \}$, 
we define 
\[
\HF^{I,\alpha}_*(X,\lambda):= \varinjlim_{H \in \mca{H}^\neg_\ad(X,\lambda)} \HF^{I,\alpha}_*(H),
\]
where the right hand side is a direct limit with respect to monotonicity homomorphisms in Corollary \ref{cor:monotonicity} (2).
If two nontrivial intervals $I, I'$ satisfy $I_\pm \subset I'_\pm$, 
there exists a natural homomorphism $\HF^{I,\alpha}_*(X,\lambda) \to \HF^{I',\alpha}_*(X,\lambda)$.
We prove the following useful lemma. 

\begin{lem}\label{lem:truncated}
For any $H \in \mca{H}_{\ad}(X,\lambda)$, there exists a natural isomorphism 
$\Psi_H: \HF^\alpha_*(H) \to \HF^{<a_H,\alpha}_*(X,\lambda)$. 
Moreover, if $H_-, H_+ \in \mca{H}_{\ad}(X,\lambda)$ satisfy $a_{H_-} \le a_{H_+}$, 
the following diagram commutes:
\[
\xymatrix{
\HF^\alpha_*(H_-) \ar[r]^-{\Psi_{H_-}} \ar[d]_{\Phi_{H_-H_+}} & \HF_*^{<a_{H_-},\alpha}(X,\lambda) \ar[d] \\
\HF^\alpha_*(H_+) \ar[r]_-{\Psi_{H_+}} & \HF_*^{<a_{H_+},\alpha}(X,\lambda)
}.
\]
\end{lem}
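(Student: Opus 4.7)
The plan is to produce a ``canonical'' representative of the slope $a_H$ in $\mca{H}^\neg_\ad(X,\lambda)$ whose $1$-periodic orbits all have action strictly below $a_H$, so that $\HF^\alpha_*$ and $\HF^{<a_H,\alpha}_*$ agree on that representative, and the latter computes the direct limit defining $\HF^{<a_H,\alpha}_*(X,\lambda)$. Since $\Spec(X,\lambda)$ is closed and nowhere dense in $\R$, I first pick $\delta>0$ so small that $(a_H-\delta,a_H]\cap\Spec(X,\lambda)=\emptyset$, and then construct $H_0\in\mca{H}^\neg_\ad(X,\lambda)$ with $a_{H_0}=a_H$ of the following shape: $H_0\equiv-\delta$ on a neighborhood of $X$; $H_0=f(r)$ with $f$ convex and increasing on a collar $\partial X\times[1,R]$ for some $R$; and $H_0(z,r)=a_H r+b$ for $r\ge R$; plus a small generic time-dependent perturbation making it nondegenerate.

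First I would compute the $1$-periodic orbits of the unperturbed radial Hamiltonian: they lie at radii $r_*$ with $f'(r_*)\in\Spec(X,\lambda)\cap(0,a_H)$, and carry action $r_* f'(r_*)-f(r_*)$. Using $f(1)\approx-\delta$, $f(r)=a_H r+b$ for $r\ge R$, convexity of $f$, and the spectral gap coming from the choice of $\delta$, I would verify that every such action lies strictly below $a_H$. Since the nondegenerate perturbation can be made arbitrarily small, the same holds for $H_0$ itself, giving
\[
\HF^\alpha_*(H_0)=\HF^{<a_H,\alpha}_*(H_0).
\]

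Next I would build a cofinal sequence $(K_n)_{n\ge 1}\subset\mca{H}^\neg_\ad(X,\lambda)$ with $a_{K_n}\to\infty$, where each $K_n$ agrees with $H_0$ on $X(R_n)$ for some $R_n$ and then convexly ramps up to a linear piece of slope $a_{K_n}$ on $\partial X\times[R_n',\infty)$. By an analogous computation applied to the transition layer, every $1$-periodic orbit of $K_n$ which is not already an orbit of $H_0$ has action $\ge a_H$ (this is where one must engineer the convex profile carefully). Consequently the monotonicity chain map $\CF^{<a_H,\alpha}_*(H_0)\to\CF^{<a_H,\alpha}_*(K_n)$ is an isomorphism of complexes, so
\[
j_0\colon\HF^{<a_H,\alpha}_*(H_0)\xrightarrow{\;\cong\;}\HF^{<a_H,\alpha}_*(X,\lambda)
\]
is an isomorphism. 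I then define
\[
\Psi_H\colon\HF^\alpha_*(H)\xrightarrow{\;\Phi_{HH_0}\;}\HF^\alpha_*(H_0)=\HF^{<a_H,\alpha}_*(H_0)\xrightarrow{\;j_0\;}\HF^{<a_H,\alpha}_*(X,\lambda),
\]
where $\Phi_{HH_0}$ is the canonical isomorphism from Corollary \ref{cor:monotonicity}(1), available since $a_H=a_{H_0}$.

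Independence of the auxiliary choice of $H_0$ and of the cofinal sequence $(K_n)$ is then automatic from the commuting triangle in Proposition \ref{prop:monotonicity}, and the commutativity of the stated square for $H_-,H_+$ with $a_{H_-}\le a_{H_+}$ follows by choosing a common $K\in\mca{H}^\neg_\ad(X,\lambda)$ whose slope dominates both $a_{H_\pm}$ and re-invoking that triangle identity. The main obstacle is the action estimate in the transition layer of $K_n$: one has to choose the convex profile so that any orbit at radius $r_*$ with $f_n'(r_*)\in(a_H,a_{K_n})\cap\Spec(X,\lambda)$ satisfies $r_* f_n'(r_*)-f_n(r_*)\ge a_H$, and this bound must hold uniformly in $n$. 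This is a standard but somewhat delicate convexity computation with the Reeb dynamics.
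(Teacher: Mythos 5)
Your overall strategy is the same as the paper's: realize $\HF^{<a_H,\alpha}_*(X,\lambda)$ as a direct limit over $\mca{H}^\neg_\ad(X,\lambda)$ and show that it stabilizes at slope $a_H$, using a representative $H_0$ of slope $a_H$ that is negative on $X$ and whose orbits in class $\alpha$ all have action $<a_H$, then transporting along the canonical isomorphism $\Phi_{HH_0}$ from Corollary \ref{cor:monotonicity}(1). The paper compresses this into a chain of four ``not hard to check'' colimit isomorphisms; you try to instantiate it concretely, and the basic idea and the action estimates are sound.

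However, there is a genuine gap in the cofinality claim. Your $K_n$ all satisfy $K_n\equiv H_0\approx-\delta$ on $X$ with $\delta>0$ fixed. But $\mca{H}^\neg_\ad(X,\lambda)$ contains Hamiltonians $G$ with $\max_X G$ arbitrarily close to $0$, and for such $G$ one has $K_n\not\ge G$ on $X$ for every $n$. Hence $(K_n)$ is \emph{not} cofinal in $\mca{H}^\neg_\ad(X,\lambda)$, and the asserted isomorphism $j_0\colon\HF^{<a_H,\alpha}_*(H_0)\to\HF^{<a_H,\alpha}_*(X,\lambda)$ does not follow from the argument as written. If you try to fix this by letting the plateau value $\delta_n\to 0$ along the sequence, you lose the assertion that $K_n$ agrees with $H_0$ on $X(R_n)$, so the truncated complexes no longer coincide on the nose and you must make a genuine continuation-map (``sandwich'') argument comparing $\HF^{<a_H,\alpha}_*$ across Hamiltonians that do not literally share generators. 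This is exactly the content hidden in the paper's fourth colimit isomorphism. (Also a more routine point: a monotonicity map between complexes with identical generators is still a continuation map, not the identity; you need the action-filtration/upper-triangularity observation to conclude it is an isomorphism of complexes.)

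Finally, you flag the transition-layer action estimate as the ``main obstacle,'' but that part is actually unproblematic: convexity of $f_n$ and $f_n(1)=-\delta$ give, via the tangent line at $r_*$, $f_n(r_*)\le f_n'(r_*)(r_*-1)-\delta$, hence $r_*f_n'(r_*)-f_n(r_*)\ge f_n'(r_*)+\delta>a_H$ whenever $f_n'(r_*)\ge a_H$, uniformly in $n$. The real obstacle is the cofinality issue above, and once you address it (for instance by a sandwich argument or by reorganizing along the paper's four-step colimit comparison), the rest of your proof -- the choice of $\delta$ using the spectral gap, the definition $\Psi_H=j_0\circ\Phi_{HH_0}$, independence of choices via the triangle identity in Proposition \ref{prop:monotonicity}, and commutativity of the square -- goes through.
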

\begin{proof}
First we construct $\Psi_H: \HF^\alpha_*(H) \to \HF^{<a_H,\alpha}_*(X,\lambda)$. 
It is not hard to check that the following natural homomorphisms are all isomorphic:
\begin{align*}
\HF^\alpha_*(H) &\to \varinjlim_{\substack{G \in \mca{H}_\ad \\ a_G \le a_H}} \HF^\alpha_*(G), \\
\varinjlim_{\substack{G \in \mca{H}^\neg_\ad \\ a_G \le a_H}} \HF^\alpha_*(G) &\to 
\varinjlim_{\substack{G \in \mca{H}_\ad \\ a_G \le a_H}} \HF^\alpha_*(G), \\
\varinjlim_{\substack{G \in \mca{H}^\neg_\ad \\ a_G \le a_H}} \HF_*^{<a_H,\alpha}(G) &\to
\varinjlim_{\substack{G \in \mca{H}^\neg_\ad \\ a_G \le a_H}} \HF_*^\alpha (G), \\
\varinjlim_{\substack{G \in \mca{H}^\neg_\ad \\ a_G \le a_H}} \HF_*^{<a_H,\alpha}(G) &\to 
\varinjlim_{G \in \mca{H}^\neg_\ad} \HF_*^{<a_H,\alpha}(G)=\HF_*^{<a_H,\alpha}(X,\lambda). \\
\end{align*}
By composing the above isomorphisms and their inverses, we get an isomophsim
$\Psi_H: \HF^\alpha_*(H) \to \HF^{<a_H,\alpha}_*(X,\lambda)$. This proves the first assertion.
The second assertion follows from the above construction.
\end{proof}

It is a standard fact that for any $\delta \in \bigl(0, \min \Spec(X,\lambda)\bigr)$, 
there exists a natural isomorphism 
$H^{n-*}(X) \cong \HF^{<\delta}_*(X,\lambda)$. 
(see \cite{Viterbo}, Proposition 1.4). 
Then, for any $0<a \le \infty$, one can define a natural homomorphism 
\[
\iota_a: H^{n-*}(X) \cong \HF_*^{<\delta}(X,\lambda) \to \HF_*^{<a}(X,\lambda) 
\]
by taking sufficiently small $\delta>0$.
Using $\iota_a$, 
we define an important homology class $F_a \in \HF_n^{<a}(X,\lambda)$ by 
\[
F_a:= \iota_a(1). 
\]

For any $H \in \mca{H}_{\ad}(X,\lambda)$, we define 
$F_H \in \HF_n(H)$ by 
$F_H:= \Psi_H^{-1}(F_{a_H})$. 
The second assertion in Lemma \ref{lem:truncated} shows that 
for any $H_-, H_+ \in \mca{H}_{\ad}(X,\lambda)$ with $a_{H_-} \le a_{H_+}$, 
there holds $\Phi_{H_-H_+}(F_{H_-})=F_{H_+}$.

\subsection{Product structure}

First we define the \textit{pair-of-pants} Riemannian surface $\Pi$.
The following definition is taken from \cite{AS2}, pp.1602--1603.
In the disjoint union $\R \times [-1,0] \sqcup \R \times [0,1]$, we consider the identifications 
\begin{align*}
(s,-1) \sim (s,0^{-}), \quad  (s,0^+) \sim (s,1)   \qquad &(s \le 0) , \\
(s,0^-) \sim (s,0^+), \quad (s,-1) \sim (s,1)  \qquad &(s \ge 0), 
\end{align*}
and define $\Pi$ to be the quotient. 
We define the standard complex structure at every point on $\Pi$ other than $P:=(0,0) \sim (0,-1) \sim (0,1)$.
On a neighborhood of $P$, we define a complex structure by the following holomorphic coordinate:
\[
(\star):\qquad \bigl\{ \zeta \in \C \bigm{|}  |\zeta| < 1/\sqrt{2} \bigr\} \to \Pi; \quad
\zeta=\sigma+\tau i \mapsto 
\begin{cases}
(\sigma^2-\tau^2,2\sigma\tau) &( \sigma \ge 0), \\
(\sigma^2-\tau^2,2\sigma\tau+1) &( \sigma \le 0, \tau \ge 0), \\
(\sigma^2-\tau^2,2\sigma\tau-1) &( \sigma \le 0, \tau \le 0).
\end{cases}
\]
$j_{\Pi}$ denotes the complex structure on $\Pi$.
We need the following convexity result:

\begin{lem}\label{lem:convexity2}
Let $(H_{s,t})_{(s,t) \in \Pi}$ be a family of Hamiltonians on $\hat{X}$, 
and 
$(J_{s,t})_{(s,t) \in \Pi}$ be a family of elements in $\mca{J}(\hat{X},\hat{\lambda})$.
Suppose that there exists $r_0 >0$, such that the following holds:
\begin{itemize}
\item There exist $a,b \in C^\infty(\R)$ such that 
$H_{s,t}(z,r)=a(s)r+b(s)$ on $\partial X \times [r_0,\infty)$, and $a'(s) \ge 0$ for any $s \in \R$.
\item For any $(s,t) \in \Pi$, $J_{s,t}$ is of contact type on $\partial X \times [r_0, \infty)$.
\end{itemize}
If $u: \Pi \to \hat{X}$ satisfies the Floer equation 
\begin{align*}
\partial_s u- J_{s,t}(\partial_t u- X_{H_{s,t}}(u)) =0 \qquad &(\text{at $(s,t) \ne P$}) \\
J_P \circ du \circ j_{\Pi} - du=0                            \qquad &(\text{at $P$})
\end{align*}
and $u^{-1}(\partial X \times (r_0,\infty))$ is bounded, then $u(\Pi) \subset X(r_0)$. 
\end{lem}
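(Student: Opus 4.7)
My plan is to follow the proof of Lemma \ref{lem:convexity} essentially step for step, treating the pair-of-pants $\Pi$ as a Riemann surface on which the cylindrical coordinates $(s,t)$ form a valid chart away from the single point $P$, and handling $P$ through the holomorphic chart $\zeta = \sigma + i\tau$ provided by $(\star)$. First I would assume for contradiction that $u(\Pi) \not\subset X(r_0)$. Boundedness of $u^{-1}(\partial X \times (r_0,\infty))$ ensures that $\sup r \circ u$ is attained and finite, so Sard's theorem supplies a regular value $r_1 \in (r_0, \sup r \circ u)$ of $r \circ u$, making $D := u^{-1}(\partial X \times [r_1,\infty))$ a compact surface with boundary $\partial D \subset u^{-1}(\partial X \times \{r_1\})$.

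The key pointwise identity
\[
|\partial_s u|^2_{J_{s,t}}\,ds \wedge dt = dH_{s,t}(\partial_s u)\,ds \wedge dt - u^* d\hat\lambda
\]
holds on $D \setminus \{P\}$ by the same calculation as in the proof of Lemma \ref{lem:convexity}. In the $\zeta$-chart one computes $ds \wedge dt = 4|\zeta|^2\,d\sigma \wedge d\tau$, which vanishes to second order at $P$ and dominates both the at-worst $|\zeta|^{-2}$ singularity of $|\partial_s u|^2$ and the $|\zeta|^{-1}$ singularity of $dH_{s,t}(\partial_s u)$ (since $u$ is smooth at $P$ in the $\zeta$-chart, $\partial_\sigma u$ and $\partial_\tau u$ are bounded there). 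Consequently both sides of the identity extend continuously across $P$ as two-forms, the integration over $D$ is well-defined, and one obtains
\[
0 < \int_D |\partial_s u|^2_{J_{s,t}}\,ds\,dt = \int_D dH_{s,t}(\partial_s u)\,ds\,dt - \int_D u^* d\hat\lambda.
\]
Equivalently, one could excise a small $\zeta$-disk $B_\epsilon(P)$ from $D$, run the computation on $D \setminus B_\epsilon(P)$, and pass to the limit $\epsilon \to 0$; the boundary contributions along $\partial B_\epsilon(P)$ vanish because $u$ is smooth at $P$ and $|\partial B_\epsilon(P)| = O(\epsilon)$.

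From here the argument is verbatim the one in Lemma \ref{lem:convexity}: use $a'(s) \ge 0$ to estimate $dH_{s,t}(\partial_s u) \le \partial_s\bigl(\hat\lambda(X_{H_{s,t}}(u))\bigr)$ on $\partial X \times [r_0,\infty)$, apply Stokes' theorem (noting that the $1$-form $\hat\lambda(X_{H_{s,t}}(u))\,dt$ extends smoothly across $P$ because $dt = 2\tau\,d\sigma + 2\sigma\,d\tau$ vanishes there), and then invoke the contact-type property of $J_{s,t}$ along $\partial D$ to rewrite the boundary integral as $r_1 \int_{\partial D} dr\bigl(X_{H_{s,t}}(u)\,dt - du\bigr) \circ j_\Pi$, which is strictly negative since $dr(X_{H_{s,t}})$ vanishes on $\partial X \times \{r_1\}$ while $dr\bigl(du(j_\Pi V)\bigr) > 0$ for $V$ tangent to $\partial D$ positively. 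This contradicts the strict positivity of the energy. The only substantive new issue compared to Lemma \ref{lem:convexity} is the behavior at the singular point $P$, and this is resolved by the vanishing orders of $ds \wedge dt$ and $dt$ in the $\zeta$-chart.
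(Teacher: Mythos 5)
Your proposal is correct and follows essentially the same approach as the paper: reduce to the proof of Lemma \ref{lem:convexity}, work in the holomorphic chart $(\star)$ near $P$, and justify the Stokes step by the excision-and-limit argument on $D \setminus D_\varepsilon$. In fact your ``alternative'' excision argument is precisely what the paper does; your primary argument via integrability of the two-forms across $P$ is an additional (equally valid) justification.

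Two small points worth tightening. First, you should also require that $r_1 \ne r\bigl(u(P)\bigr)$, so that $P \notin \partial D$; this excludes at most one value of $r_1$, and the paper makes this choice explicit. Without it, the excision near $P$ happens at a boundary point of $D$ and the argument needs a (harmless but separate) modification. Second, your claim that both sides of the pointwise identity ``extend continuously across $P$'' is slightly stronger than what your bounds give: from $|\partial_s u|^2 = O(|\zeta|^{-2})$ and $ds \wedge dt = 4|\zeta|^2\,d\sigma \wedge d\tau$ you get boundedness, not continuity. (Continuity is in fact true, but it uses the Floer equation $J_P \circ du \circ j_\Pi = du$ at $P$ to kill the ray-dependence of the quotient $\bigl|\sigma\,\partial_\sigma u(0) - \tau\,\partial_\tau u(0)\bigr|^2/|\zeta|^2$.) Boundedness, and hence integrability over $D$, is all you actually use, so the argument stands.
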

\begin{proof}
If $u(\Pi)$ is not contained in $X(r_0)$, then there exists $r_1>r_0$ such that 
$u(\Pi)$ is not contained in $X(r_1)$, $u$ is transversal to $\partial X \times \{r_1\}$ and 
$u(P) \notin \partial X \times \{r_1\}$. 
Then, $D:=u^{-1}\bigl(\partial X \times [r_1,\infty)\bigr)$ is a compact surface with boundary, and $P \notin \partial D$. 

The rest part of the proof is almost same as that of Lemma \ref{lem:convexity}, replacing $D$ with $D \setminus \{P\}$. 
The only delicate point is that we have to check
\[
\int_{D \setminus \{P\}} \partial_s\bigl(\hat{\lambda}(X_{H_{s,t}}(u)) \bigr)\, ds dt - u^*(d\hat{\lambda})
= \int_{\partial D} \hat{\lambda}(X_{H_{s,t}}(u)\, dt - du) 
\]
holds true, where we cannot apply Stokes's theorem (when $P \in \interior D$, $D \setminus \{P\}$ is noncompact).
It is enough to consider the case $P \in \interior D$. 
Take a complex chart $(\star)$ near $P$, and set 
$D_\varepsilon:=\{\zeta \in \C \mid 0 \le |\zeta| \le \varepsilon \}$.
Then, the above identity is proved as 
\begin{align*}
&\int_{D \setminus \{P\}} \partial_s\bigl(\hat{\lambda}(X_{H_{s,t}}(u)) \bigr)\, ds dt - u^*(d\hat{\lambda})
=\lim_{\varepsilon \to 0} \int_{D \setminus D_\varepsilon} \partial_s\bigl(\hat{\lambda}(X_{H_{s,t}}(u)) \bigr)\, ds dt - u^*(d\hat{\lambda}) \\
&\qquad =\lim_{\varepsilon \to 0} \int_{\partial D_\varepsilon \cup \partial D} \hat{\lambda}(X_{H_{s,t}}(u)\,dt - du) 
=\int_{\partial D} \hat{\lambda}(X_{H_{s,t}}(u)\, dt- du).
\end{align*}
\end{proof}

Let $H, K \in \mca{H}_\ad(X,\lambda)$. Suppose that the following holds:
\begin{enumerate}
\item[(P0):] $\partial_t^rH|_{t=0} = \partial_t^rK|_{t=0}$ for any integer $r \ge 0$. In particular, $a_H=a_K$.
\end{enumerate}
We define $H*K \in C^\infty(S^1 \times \hat{X})$ by 
\[
(H * K)_t:= \begin{cases} 2H_{2t} &( 0 \le t \le 1/2) \\ 2K_{2t-1} &(1/2 \le t \le 1) \end{cases}.
\]
Suppose also that the following holds:
\begin{enumerate}
\item[(P1):] $H*K \in \mca{H}_\ad(X)$, 
\item[(P2):] For any $x \in \mca{P}(H)$ and $y \in \mca{P}(K)$, $x(0) \ne y(0)$. 
\end{enumerate}

Let $(J_t)_{-1 \le t \le 1}$ be a family of elements in $\mca{J}(\hat{X},\hat{\lambda})$, which is of contact type at $\infty$ and 
$\partial_t^r J_t|_{t=-1} = \partial_t^r J_t|_{t=0} = \partial_t^r J_t|_{t=1}$ for any integer $r \ge 0$. 
For any $x \in \mca{P}(H)$, $y \in \mca{P}(K)$ and $z \in \mca{P}(H*K)$, 
let $\mca{M}(x,y:z)$ denote the set of $u: \Pi \to \hat{X}$ which satisfies 
\begin{align*}
\partial_s u- J_t(\partial_t u- X_{(H*K)_{(t+1)/2}}(u)) =0 \qquad &(\text{at $(s,t) \ne P$}) \\
J_P \circ du \circ j - du=0                            \qquad &(\text{at $P$})
\end{align*}
with boundary conditions 
\begin{align*}
&\lim_{s \to -\infty} u(s,t) = x(t) \quad (0 \le t \le 1), \\
&\lim_{s \to -\infty} u(s,t) = y(t+1) \quad (-1 \le t \le 0), \\
&\lim_{s \to \infty} u(s,t) = z\bigl((t+1)/2\bigr) \quad( -1 \le t \le 1).
\end{align*}
For generic $(J_t)_{-1 \le t \le 1}$, following holds:
$\mca{M}(x,y:z)$ is a smooth manifold with dimension $\ind_{\CZ}(x)+\ind_{\CZ}(y) - \ind_{\CZ}(z)-n$. 
When $\dim \mca{M}(x,y:z)=0$, $\mca{M}(x,y:z)$ is compact (hence is a finite point set). Moreover, 
\[
\CF_i(H) \otimes \CF_j(K) \to \CF_{i+j-n}(H*K): \quad 
[x] \otimes [y] \mapsto \sum_z \sharp \mca{M}(x,y:z) \cdot  [z]
\]
is a chain map. These claims are proved by the usual transversality and glueing arguments, combined with a $C^0$-estimate
for Floer trajectories, which follows from Lemma \ref{lem:convexity2}.
Hence we can define the \textit{pair-of-pants product} on Floer homology of Hamiltonians:
\[
\HF_i(H) \otimes \HF_j(K) \to \HF_{i+j-n}(H*K); \quad \alpha \otimes \beta \mapsto \alpha*\beta.
\]

Simple computations show that for any $x \in \mca{P}(H)$, $y \in \mca{P}(K)$, $z \in \mca{P}(H*K)$ and 
$u \in \mca{M}(x,y:z)$, 
\[
\mca{A}_H(x) + \mca{A}_K(y) - \mca{A}_{H*K}(z)=\int_{\Pi \setminus \{P\}} |\partial_s u|_{J_t}^2 \, dsdt \ge 0.
\]
In particular, $\mca{M}(x,y:z) \ne \emptyset \implies \mca{A}_H(x) + \mca{A}_K(y) \ge \mca{A}_{H*K}(z)$.
Hence for any $-\infty \le a,b \le \infty$, one can define the pair-of-pants product on truncated Floer homology of Hamiltonians: 
\[
\HF^{<a}_i(H) \otimes \HF^{<b}_j(K) \to \HF^{<a+b}_{i+j-n}(H*K).
\]
By using Lemma \ref{lem:convexity2}, it is easy to show that it commutes with monotonicity homomorphisms: 

\begin{lem}\label{lem:product}
Suppose that $H, K, \bar{H}, \bar{K} \in \mca{H}_\ad(X,\lambda)$ satisfy the following: 
\begin{enumerate}
\item $H$ and $K$ satisfy (P0), (P1), (P2).
\item $\bar{H}$ and $\bar{K}$ satisfy (P0), (P1), (P2).
\item $H_t \le \bar{H}_t$, $K_t \le \bar{K}_t$ for any $t \in S^1$. 
\end{enumerate}
Then, the following diagram commutes for any $-\infty \le a,b \le \infty$: 
\[
\xymatrix{
\HF^{<a}_i(H) \otimes \HF^{<b}_j(K) \ar[r]\ar[d]_{\Phi_{H\bar{H}} \otimes \Phi_{K\bar{K}}} & \HF^{<a+b}_{i+j-n}(H*K)\ar[d]^{\Phi_{H*K,\bar{H}*\bar{K}}} \\
\HF^{<a}_i(\bar{H}) \otimes \HF^{<b}_j(\bar{K}) \ar[r] & \HF^{<a+b}_{i+j-n}(\bar{H}*\bar{K})
}.
\]
\end{lem}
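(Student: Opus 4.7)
The plan is to compare chain-level representatives of the two compositions in the diagram by constructing a parametrized moduli space on the pair-of-pants $\Pi$ that supplies a chain homotopy.

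First I would realize each composition as a single chain map defined by a moduli space of Floer trajectories on $\Pi$. For the upper-right route $\Phi_{H*K,\bar{H}*\bar{K}} \circ (*_{H,K})$, choose a Floer datum $(H^0_{s,t}, J^0_{s,t})$ on $\Pi$ whose Hamiltonian equals $H$ on the first input cylindrical end and $K$ on the second, and along the output end is an $s$-monotone interpolation $H*K \rightsquigarrow \bar{H}*\bar{K}$ (this is allowed since $H \le \bar{H}$ and $K \le \bar{K}$ imply $H*K \le \bar{H}*\bar{K}$). For the lower-left route $(*_{\bar{H},\bar{K}}) \circ (\Phi_{H\bar{H}} \otimes \Phi_{K\bar{K}})$, choose a datum $(H^1_{s,t},J^1_{s,t})$ that interpolates $H \rightsquigarrow \bar{H}$ and $K \rightsquigarrow \bar{K}$ along the two input ends and equals $\bar{H}*\bar{K}$ on the output end. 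Counts of the associated moduli spaces recover the two compositions, and by construction both satisfy $\partial_s H^i_{s,t} \ge 0$ everywhere on $\Pi$.

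Next, I would interpolate by the convex combination $H^\tau_{s,t} := (1-\tau) H^0_{s,t} + \tau H^1_{s,t}$ for $\tau \in [0,1]$, together with a generic family $J^\tau_{s,t}$ connecting $J^0$ and $J^1$ through almost complex structures of contact type at infinity. Convex combination preserves (a) the common asymptotics $H,K$ (inputs) and $\bar{H}*\bar{K}$ (output), (b) the condition that the slope at infinity is a non-decreasing function of $s$ (a convex combination of non-decreasing functions is non-decreasing), and (c) the pointwise inequality $\partial_s H^\tau_{s,t} \ge 0$. Property (b) together with the pair-of-pants version of the convexity estimate (Lemma \ref{lem:convexity2}) yields a uniform $C^0$-bound on all Floer solutions as $\tau$ varies; property (c), combined with the standard energy identity
\[
\mca{A}_H(x) + \mca{A}_K(y) - \mca{A}_{\bar{H}*\bar{K}}(z) = \int_{\Pi \setminus \{P\}} |\partial_s u|_{J^\tau_{s,t}}^2 \, ds\, dt + \int_{\Pi \setminus \{P\}} \partial_s H^\tau_{s,t}(u) \, ds\, dt,
\]
forces the filtration estimate $\mca{A}_H(x) + \mca{A}_K(y) \ge \mca{A}_{\bar{H}*\bar{K}}(z)$ on every $u \in \mca{M}^\tau(x,y:z)$.

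For generic $(J^\tau_{s,t})_{\tau \in [0,1]}$, the parametrized moduli space $\widetilde{\mca{M}}(x,y:z) := \bigsqcup_{\tau \in [0,1]} \{\tau\} \times \mca{M}^\tau(x,y:z)$ is a smooth manifold with boundary $\{0,1\} \times \mca{M}^i(x,y:z)$, of dimension $\ind_\CZ(x) + \ind_\CZ(y) - \ind_\CZ(z) - n + 1$. Counting its zero-dimensional components defines a degree-one operator
\[
T : \CF_i^{<a}(H) \otimes \CF_j^{<b}(K) \to \CF_{i+j-n+1}^{<a+b}(\bar{H}*\bar{K}),
\]
and the standard analysis of the boundary of one-dimensional components (endpoint fibers at $\tau=0,1$ versus broken trajectories of the usual two types) yields a chain-homotopy identity $\partial T + T \partial = \Psi^1 - \Psi^0$, where $\Psi^i$ is the chain map defined by $(H^i, J^i)$. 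Passing to homology gives exactly the commutativity of the square. The main obstacle, as with the pair-of-pants product itself, is the uniform $C^0$-bound across the parameter $\tau$; the convex combination construction for $H^\tau$ is chosen precisely so that the slope function at infinity depends monotonically on $s$ for every $\tau$, allowing a direct invocation of Lemma \ref{lem:convexity2} with constants independent of $\tau$. Once this compactness is secured, the transversality and gluing arguments parallel those used to define the pair-of-pants product and the monotonicity homomorphism, and no essentially new difficulty arises.
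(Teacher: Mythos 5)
Your proposal is correct and follows the standard chain-homotopy argument that the paper has in mind when it asserts this lemma "is easy to show" using Lemma \ref{lem:convexity2}; the paper itself omits the details. The two compositions are indeed realized by single pair-of-pants moduli problems, the convex combination $H^\tau_{s,t}=(1-\tau)H^0_{s,t}+\tau H^1_{s,t}$ preserves the $s$-monotone slope at infinity and the fixed asymptotics (noting $H_t \le \bar H_t$ forces $a_H \le a_{\bar H}$, so the slope interpolations are genuinely non-decreasing for all $\tau$), Lemma \ref{lem:convexity2} then gives uniform $C^0$ bounds, and the parametrized moduli space yields the chain homotopy. One small imprecision to tighten: for the zero-dimensional components of $\widetilde{\mca{M}}(x,y:z)$ the fibers at $\tau=0,1$ are generically empty (index $-1$), so $T$ is a count of interior "crossing" solutions, and the homotopy identity $\partial T + T\partial=\Psi^1-\Psi^0$ comes from analyzing boundaries of the one-dimensional components, where the contributions are either endpoint fibers at $\tau=0,1$ or index-one breakings at the inputs/output — you do say this in the next sentence, but the initial claim "with boundary $\{0,1\}\times\mca{M}^i$" is misleading since broken configurations also contribute to the compactified boundary. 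Also, the filtration inequality should be stated slightly more carefully so it is compatible with open truncation windows $(-\infty,a)$: the energy identity gives $\mca{A}_H(x)+\mca{A}_K(y)\ge\mca{A}_{\bar H*\bar K}(z)$, which suffices. None of this affects the correctness of the approach.
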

%\begin{proof}
%It is enough to show that the following diagram
%\[
%\xymatrix{
%\CF^a_i(H) \otimes \CF^b_j(K) \ar[r]\ar[d] & \CF^{a+b}_{i+j-n}(H*K)\ar[d] \\
%\CF^a_i(\bar{H}) \otimes \CF^b_j(\bar{K}) \ar[r] & \CF^{a+b}_{i+j-n}(\bar{H}*\bar{K})
%}
%\]
%commutes up to homotopy, i.e. there exists a homotopy operator 
%\[
%\psi_*: (\CF^a(H) \otimes \CF^b(K))_* \to \CF^{a+b}_{*-n+1}
%\]
%such that 
%\begin{equation}\label{eq:hptyop}
%\pi_{\bar{H}\bar{K}} \cdot \varphi_{H\bar{H}} \otimes \varphi_{K\bar{K}} - 
%\varphi_{H*K, \bar{H}*\bar{K}} \cdot \pi_{HK} 
%=\psi \cdot \partial_{\CF(H) \otimes \CF(K)} - \partial_{\CF(\bar{H}*\bar{K})}\cdot \psi.
%\end{equation}
%To define $\psi$, we take $\chi \in C^\infty(\R)$ so that 
%$\chi(s)=0\,(s \le -1)$, $\chi(s)=1\,(s \ge 1)$ and $\chi'(s) \ge 0$ for any $s \in \R$.
%We set $H_s:= \bigl(1-\chi(s)\bigr) H + \chi(s)\bar{H}$, 
%$K_s:=\bigl(1-\chi(s)\bigr)K + \chi(s)\bar{K}$.
%Then, $\psi$ is defined as
%\[
%\psi\bigl([x] \otimes [y] \bigr) := \sum_{z} n_{x,y:z} [z],
%\]
%where $n_{x,y:z}$ is the number of pairs $(u, s_0)$, where $u:\Sigma_P \to \hat{X}$ and $s_0 \in \R$ satisfying
%\begin{align*}
%&\partial_s u- J_t(\partial_t u - X_{(H_{s-s_0}*K_{s-s_0})_{(t+1)/2}}(u))=0, \\
%&\lim_{s \to -\infty} u(s,t) = x(t) \quad (0 \le t \le 1), \\
%&\lim_{s \to -\infty} u(s,t) = y(t+1) \quad (-1 \le t \le 0), \\
%&\lim_{s \to \infty} u(s,t) = z\bigl((t+1)/2\bigr) \quad( -1 \le t \le 1).
%\end{align*}
%Then, lemma \ref{lem:convexity2} and the usual glueing argument show that 
%$\psi$ satisfies (\ref{eq:hptyop}).
%\end{proof}

By Lemma \ref{lem:product}, one can define the 
pair-of-pants product on truncated Floer homology of Liouville domains: 
$\HF^{<a}_i(X,\lambda) \otimes \HF^{<b}_j(X,\lambda) \to \HF^{<a+b}_{i+j-n}(X,\lambda)$.

Moreover, the isomorphism in Lemma \ref{lem:truncated} commutes with products. 
More precisely, if $H, K \in \mca{H}_\ad(X,\lambda)$ satisfy (P0), (P1), (P2), 
the following diagram commutes ($a:=a_H=a_K$):
\[
\xymatrix{
\HF_i(H) \otimes \HF_j(K) \ar[r]\ar[d]_{\Psi_H \otimes \Psi_K} & \HF_{i+j-n} (H*K) \ar[d]^{\Psi_{H*K}} \\
\HF_i^{<a}(X,\lambda) \otimes \HF_j^{<a}(X,\lambda) \ar[r] & \HF_{i+j-n}^{<2a}(X,\lambda)
}.
\]

\section{Floer homology of cotangent bundles and loop product}
The computation of Floer homology of cotengent bundles has been studied by several authors 
 (see \cite{AS1}, \cite{AS2}, \cite{SW}, \cite{Viterbo}).
We mainly follow \cite{AS1}, \cite{AS2}. 
The aim of this section is to state Theorem \ref{thm:loop}, which relates Floer homology of cotangent bundles
 with singular homology of loop spaces, and 
the pair-of-pants product with the loop product. 

In section 3.1, we recall the definition of the loop product following \cite{AS2}.
In section 3.2, we state Theorem \ref{thm:loop}. 
Although Theorem \ref{thm:loop} is essentially established in \cite{AS1} and \cite{AS2}, 
it requires some technical arguments to deduce it from results in those papers in a rigorous manner. 
Since these arguments are rather technical, they are postponed until section 6.

\subsection{Loop product}
In this subsection, we recall the definition of the loop product, which was discovered in \cite{CS}.
The following exposition is taken from section 1.2 in \cite{AS2}
(although authors work on $C^0(S^1,M)$ rather than $W^{1,2}(S^1,M)$ there). 

First we recall the definition of the \textit{Umkehr map}.
Let $X$ be a Hilbert manifold, $Y$ be its closed submanifold with codimension $n$, and $i:Y \to X$ be the inclusion map. 
Let $NY$ denote the normal bundle of $Y$. 
The tublar neighborhood theorem (see \cite{Lang}, IV, sections 5-6)
claims that there exists a unique (up to isotopy) open embedding 
$u: NY \to X$ such that $u(y,0)= i(y)$ for any $y \in Y$. 
Setting $U:= u(NY)$, the \textit{Umkehr map} $i_!: H_*(X) \to H_{*-n}(Y)$ is defined as 
\[
\xymatrix{
H_*(X)\ar[r]&H_*(X, X \setminus Y) \ar[r]^{\cong}&H_*(U,U \setminus Y) \ar[r]^{(u_*)^{-1}}&H_*(NY, NY \setminus Y) \ar[r]^-{\tau}&H_{*-n}(Y).
}
\]
The second arrow is the isomorphism given by excision, the last one is the Thom isomorphism associated to the vector bundle $NY \to Y$
(although it is not oriented, we can consider the Thom isomorphism since we are working on $\Z_2$-coefficient homology).
For later purposes, we state the following lemma which is immediate from the above definition:

\begin{lem}\label{lem:umkehr}
Let $X$ be a Hilbert manifold, $Y$ be its closed submanifold with codimension $n$, and $i:Y \to X$ be the inclusion map. 
Let $Z$ be a $k$-dimensional compact manifold, and $f: Z \to X$ be a smooth map which is transversal to $Y$. Then, there holds
the following identity in $H_{k-n}(Y)$ : 
$i_{!}\bigl(f_*[Z]\bigr) = f_*\bigl[f^{-1}(Y)\bigr] $.
\end{lem}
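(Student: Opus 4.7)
The plan is to derive the identity from the naturality of each of the four arrows in the definition of $i_!$, combined with the model computation that the Umkehr map sends the fundamental class of the ambient manifold to that of the submanifold.

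First I would use transversality to equip $W:=f^{-1}(Y)$ with the structure of a closed codimension-$n$ submanifold of $Z$; let $j:W \hookrightarrow Z$ denote the inclusion and set $g:=f|_W:W \to Y$. The differential of $f$ restricted to $W$ descends to a bundle map $df_N: NW \to NY$ covering $g$, which is fiberwise linear and, by transversality, a fiberwise isomorphism. Using the tubular neighborhood theorem of Lang cited in the excerpt, one can then choose tubular neighborhood embeddings $u: NY \to X$ and $v: NW \to Z$ with $U:=u(NY)$, $V:=v(NW)$ satisfying the compatibility $f(V) \subset U$ and $f \circ v = u \circ df_N$.

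With these choices in hand, $f$ and $df_N$ assemble into a morphism from the sequence defining $j_!$ to the sequence defining $i_!$, namely a commutative diagram whose two rows are the two Umkehr constructions and whose vertical arrows are $f_*$, $f_*$, $f_*$, $(df_N)_*$, $g_*$. The first three squares commute by functoriality of singular homology and by naturality of excision. The last square commutes because $df_N$ is a fiberwise linear isomorphism and therefore pulls the mod-$2$ Thom class of $NY$ back to the mod-$2$ Thom class of $NW$, which together with the projection formula for the cap product yields the naturality $g_* \circ \tau_W = \tau_Y \circ (df_N)_*$. Reading the outer rectangle then gives the master identity $i_! \circ f_* = g_* \circ j_!$.

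It remains to verify the model case $j_!([Z])=[W]$: under the tubular identification, $[Z]$ corresponds to the local fundamental class in $H_k(NW, NW\setminus W)$, which the Thom isomorphism sends to $[W]$. Combining with the master identity gives
\[
i_!\bigl(f_*[Z]\bigr) \,=\, g_*\bigl(j_!([Z])\bigr) \,=\, g_*[W] \,=\, f_*[f^{-1}(Y)],
\]
as required. The one technical subtlety is arranging the compatibility $f \circ v = u \circ df_N$ of the two tubular neighborhoods in the Hilbert manifold setting; once this has been secured, every remaining step is a formal consequence of the naturality of excision and of the Thom isomorphism.
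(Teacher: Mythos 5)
The paper offers no proof of this lemma at all; it simply declares it immediate from the definition of the Umkehr map. Your proof is the standard naturality argument and is essentially correct in outline.

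The one step I would flag is your assertion that tubular neighborhoods can be chosen so that $f \circ v = u \circ df_N$ holds exactly. This is more delicate than the phrasing suggests: $g = f|_W : W \to Y$ need not be injective, so over a point $y \in Y$ with $g^{-1}(y) = \{w_1, w_2, \dots\}$ the requirement $u \circ df_N = f \circ v$ imposes several a priori independent constraints on $u$ near $NY_y$, and one has to check carefully that these are mutually consistent and that the resulting $u$ (respectively the corrected $v$ on the remaining sheets) is a genuine open embedding. Rather than securing exact compatibility, it is cleaner to observe that only homotopy compatibility is needed: the two maps $f \circ v$ and $u \circ df_N$, viewed as maps from a neighborhood of the zero section in $NW$ to $X$, both restrict to $g$ on the zero section and both have normal derivative $df_N$ along the zero section; since $df_N$ is a fiberwise isomorphism, on a sufficiently small neighborhood of the zero section they are homotopic rel the zero section through maps that carry $NW \setminus W$ into $X \setminus Y$. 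This is all that is required for your ladder of homology groups to commute, and it avoids the tubular-neighborhood-matching problem entirely. With that modification, the remaining steps — naturality of excision, the fact that $df_N$ being a fiberwise isomorphism pulls back the $\Z_2$ Thom class of $NY$ to that of $NW$, and the model computation $j_!([Z]) = [W]$ — are correct and yield the identity $i_!\bigl(f_*[Z]\bigr) = f_*\bigl[f^{-1}(Y)\bigr]$.
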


Then we define the loop product.
Let $M$ be a $n$-dimensional Riemannian manifold, and $a,b>0$. 
Let us consider the evaluation map $\ev \times \ev: \Lambda^{<a}_M \times \Lambda^{<b}_M \to M \times M; (\gamma,\gamma') \mapsto \bigl(\gamma(0), \gamma'(0)\bigr)$ and 
the diagonal $\Delta_M:=\{(x,x) \in M \times M \mid x \in M\} \subset M \times M$.
Then,
$\Theta^{a,b}_M:=(\ev \times \ev)^{-1}(\Delta_M)$
is a $n$-codimensional submanifold of $\Lambda^{<a}_M \times \Lambda^{<b}_M$. 
Let us denote the embedding map $\Theta^{a,b}_M \to \Lambda^{<a}_M \times \Lambda^{<b}_M$ by $e$. 
Moreover, $\Gamma:\Theta^{a,b}_M \to \Lambda^{<a+b}_M$ denotes the concatenation map, i.e.
\[
\Gamma(\gamma,\gamma')(t):= \begin{cases}
                            \gamma(2t) &( 0 \le t \le 1/2) \\
                            \gamma'(2t-1) &(1/2 \le t \le 1)
                            \end{cases}.
\]
Then, the loop product 
\[
\circ: H_i(\Lambda^{<a}_M) \otimes H_j(\Lambda^{<b}_M) \to H_{i+j-n}(\Lambda^{<a+b}_M)
\]
is defined as the composition of the following three homomorphisms:
\[
\xymatrix{
H_i(\Lambda^{<a}_M) \otimes H_j(\Lambda^{<b}_M) \ar[r]^{\times}&H_{i+j}(\Lambda^{<a}_M \times \Lambda^{<b}_M) \ar[r]^{e_!}&H_{i+j-n}(\Theta^{a,b}_M) \ar[r]^{\Gamma_*} &H_{i+j-n}(\Lambda^{<a+b}_M).
}
\]
The first arrow is the usual cross-product in singular homology.
The second one is the Umkehr map associated to the embedding $e: \Theta^{a,b}_M \to \Lambda^{<a}_M \times \Lambda^{<b}_M$, 
and the last one is induced by the concatenation map $\Gamma$. 

\subsection{Floer homology of cotangent bundles}

We state Theorem \ref{thm:loop}. 
As we have explained at the beginning of this section, its proof is postponed until section 6:

\begin{thm}\label{thm:loop}
For any closed oriented $n$-dimensional Riemannian manifold $M$, there holds the following statements:
\begin{enumerate}
\item For any homotopy class $\alpha$ of free loops on $M$ and 
$0<a \le \infty$, there exists a natural isomorphism $\HF_*^{<a,\alpha}(DT^*M) \cong H_*(\Lambda^{<a,\alpha}_M)$.
\item For any $0<a \le b \le \infty$, the following diagram commutes:
\[
\xymatrix{
\HF^{<a,\alpha}_*(DT^*M) \ar[r]^-{\cong}\ar[d] & H_*(\Lambda^{<a,\alpha}_M)\ar[d] \\
\HF^{<b,\alpha}_*(DT^*M) \ar[r]_-{\cong}& H_*(\Lambda^{<b,\alpha}_M)
}.
\]
The right arrow is induced by the inclusion $\Lambda^{<a,\alpha}_M \to \Lambda^{<b,\alpha}_M$.
\item The following diagram commutes:
\[
\xymatrix{
\HF_i(DT^*M) \otimes \HF_j(DT^*M)\ar[r]\ar[d]_{\cong}&\HF_{i+j-n}(DT^*M)\ar[d]^{\cong} \\
H_i(\Lambda_M) \otimes H_j(\Lambda_M) \ar[r]& H_{i+j-n}(\Lambda_M)
}.
\]
The top arrow is the pair-of-pants product, the bottom arrow is the loop product.
\end{enumerate}
\end{thm}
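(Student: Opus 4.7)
The strategy is to import the chain-level isomorphism of Abbondandolo--Schwarz \cite{AS1,AS2}, originally formulated for fiberwise Tonelli Hamiltonians of quadratic growth on $T^*M$, into the Liouville-domain framework of Section~2, in which Floer homology is computed using admissible Hamiltonians linear at infinity and is filtered by the symplectic action $\mca{A}$. The key input from \cite{AS1} is, for each Tonelli Hamiltonian $K$ on $T^*M$, a chain map
\[
\Theta_K : \CF^\alpha_*(K) \to \CM^\alpha_*(S_L)
\]
from the Floer complex to the Morse complex of the Lagrangian action $S_L(\gamma)=\int_0^1 L(t,\gamma,\dot\gamma)\,dt$, where $L$ is the fiberwise Legendre dual of $K$. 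The map $\Theta_K$ counts half-cylinders $u:(-\infty,0]\times S^1\to T^*M$ asymptotic to a Hamiltonian orbit at $s\to-\infty$ with $s=0$ boundary on the unstable manifold of a Morse critical point of $S_L$; it is a chain homotopy equivalence, and by \cite{AS2} it intertwines the pair-of-pants product with the loop product via an analogous Floer problem on the pair-of-pants with two half-strip ends.

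\textbf{Adapting to the Liouville setup.} Given $0<a\le\infty$, I would fix a cofinal sequence $\{H_\nu\}\subset\mca{H}^\neg_\ad(DT^*M,\lambda_M)$ with slopes $a_{H_\nu}\nearrow a$, and for each $\nu$ construct a Tonelli Hamiltonian $K_\nu$ on $T^*M$ that agrees with $H_\nu$ on a compact region $\{|p|\le R_\nu\}$ containing all $1$-periodic orbits of $H_\nu$ of action $<a_{H_\nu}$, and grows quadratically outside. A $C^0$-estimate using Lemma~\ref{lem:convexity} confines every Floer trajectory with both endpoints in that action window to the common region, yielding an equality of truncated complexes
\[
\CF^{<a_{H_\nu},\alpha}_*(H_\nu)=\CF^{<a_{H_\nu},\alpha}_*(K_\nu).
\]
If $K_\nu$ is chosen so that its Legendre dual $L_\nu$ is a small Tonelli perturbation of the kinetic energy with $\mca{A}_{K_\nu}(x)=S_{L_\nu}(\pi_M\circ x)$ on generators, and the sublevels $\{S_{L_\nu}<c\}\cap\Lambda^\alpha_M$ are homotopy equivalent to $\Lambda^{<c,\alpha}_M$, then $\Theta_{K_\nu}$ restricts to filtered quasi-isomorphisms in each action window. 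Part~(1) follows by Lemma~\ref{lem:truncated} after passing to the direct limit $\nu\to\infty$; part~(2) is the naturality of $\Theta_{K_\nu}$ under the monotonicity homomorphisms of Proposition~\ref{prop:monotonicity}. Part~(3) is the same comparison applied to the pair-of-pants Floer moduli and justified by Lemma~\ref{lem:convexity2} together with Lemma~\ref{lem:product}, identifying the pair-of-pants product with the chain-level loop product of \cite{AS2}.

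\textbf{Main obstacle.} The decisive difficulty is the filtered compatibility in the middle step: the symplectic action $\mca{A}_{H_\nu}$, the Lagrangian action $S_{L_\nu}$, and the length functional $\len$ are three genuinely different functionals on $\Lambda_M$, and only for a delicately chosen Tonelli interpolant $K_\nu$ does the AS chain map become filtration-preserving in the sense demanded by Theorem~\ref{thm:loop}(1). One must arrange the slopes and smoothings so that $K_\nu$ is fiberwise strictly convex and superlinear enough to invoke \cite{AS1,AS2}, agrees with the linear $H_\nu$ on a region capturing all action-$<a_{H_\nu}$ dynamics, and produces a Morse functional $S_{L_\nu}$ whose sublevels deformation retract onto the length sublevels of $\Lambda^\alpha_M$. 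All other ingredients—transversality, gluing, and compactness for the half-cylinder and pair-of-pants problems—are verbatim adaptations of \cite{AS1,AS2}, with the convexity lemmas of Section~2 replacing the quadratic-growth $C^0$-estimates used there.
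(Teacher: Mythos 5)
Your plan correctly identifies the two translations that have to happen (linear-at-infinity vs.\ quadratic Hamiltonians, and symplectic action vs.\ length filtration), and parts (2), (3) are in the same spirit as the paper. But the ``main obstacle'' paragraph is precisely where the proof actually lives, and your sketch of how to deal with it is not only incomplete but points in the wrong direction. You propose choosing $K_\nu$ so that its Legendre dual $L_\nu$ is ``a small Tonelli perturbation of the kinetic energy.'' The kinetic energy $L(t,q,v)=\tfrac12|v|^2$ has $S_L(\gamma)=\tfrac12\int_{S^1}|\gamma'|^2\,dt$, whose sublevel sets are $W^{1,2}$-energy sublevels, not length sublevels; no small perturbation of it will make $\{S_L<a\}$ agree with (or even be homotopy equivalent to) $\Lambda^{<a}_M=\{\len<a\}$ for finite $a$, since length and $L^2$-energy are genuinely different filtrations. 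So the ``filtered quasi-isomorphism in each action window'' you want is not obtained by this choice.

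The paper's resolution (Lemma \ref{lem:sequence}) is to run the construction in the opposite direction: pick the Lagrangians first, of the radially symmetric form $L^i(t,q,v)=l^i(|v|)$ where $l^i$ is a decreasing sequence of smooth, strictly convex, quadratic-at-infinity functions with $l^i(t)>t$ that converge to $t\mapsto t$ from above (i.e.\ cofinal below the length integrand). Then $S_{L^i}(\gamma)=\int l^i(|\gamma'|)\,dt>\len(\gamma)$ for every $i$, and $\bigcup_i\{S_{L^i}<a\}=\Lambda^{<a}_M$ on the nose, while the Fenchel duals $H^i$ are automatically cofinal in the set of quadratic-growth Hamiltonians negative on $DT^*M$; both isomorphisms in Theorem \ref{thm:loop}(1) then come out simultaneously in the direct limit, with no deformation-retraction argument required. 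The other ingredient (Proposition \ref{prop:HFinAS4}) handles the linear/quadratic comparison once and for all by truncating a quadratic $H$ to $H^{\lin}$ on an intermediate shell and invoking Lemma \ref{lem:convexity3}, rather than building matched pairs $(H_\nu,K_\nu)$ as you propose; your version is workable but more entangled. In short: the chain-map machinery you cite is correct, but the theorem is false for the filtration you set up, and the specific cofinal family of Lagrangians adapted to the length functional is the missing idea.
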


Here are two corollaries: 

\begin{cor}\label{cor:loop}
For any $0<a \le \infty$, the isomorphism $\HF^{<a}_*(DT^*M) \cong H_*(\Lambda^{<a}_M)$ maps $F_a \in \HF^{<a}_n(DT^*M)$ to 
$c_*[M] \in H_n(\Lambda^{<a}_M)$, where $c:M \to \Lambda^{<a}_M$ maps $p \in M$ to the constant loop at $p$.
\end{cor}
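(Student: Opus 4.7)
The plan is to reduce to a short-action computation and then identify both $F_a$ and $c_*[M]$ via a common Morse-theoretic picture.

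First, pick $0 < \delta < \min \Spec(DT^*M, \lambda_M)$ with $\delta \le a$. By the definition of $\iota_a$, the class $F_a$ is the image of $F_\delta$ under the natural map $\HF^{<\delta}_n(DT^*M) \to \HF^{<a}_n(DT^*M)$. Since every constant loop has length $0$, the map $c: M \to \Lambda^{<a}_M$ factors through $\Lambda^{<\delta}_M$, so $c_*[M] \in H_n(\Lambda^{<a}_M)$ is the image of $c_*[M] \in H_n(\Lambda^{<\delta}_M)$ under the inclusion-induced map. By Theorem \ref{thm:loop} (2), it therefore suffices to prove the statement at level $\delta$. Shrinking $\delta$ further if necessary so that $\delta$ is less than the injectivity radius of $M$, one sees that $\Lambda^{<\delta}_M$ deformation retracts onto $c(M)$ (contract each short loop along its unique minimizing chord to its basepoint), so $c_*[M]$ is the fundamental class of $H_n(\Lambda^{<\delta}_M) \cong H_n(M)$.

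Next, I identify $F_\delta$ using a specific chain-level model. Choose $H \in \mca{H}_\ad(DT^*M, \lambda_M)$ with $a_H < \delta$ whose restriction to $DT^*M$ is a $C^2$-small Morse function. For such $H$, all elements of $\mca{P}(H)$ are constants at critical points of $H|_{DT^*M}$, and the Floer differential reduces to the Morse differential (with Floer grading $k$ equal to $n$ minus the Morse index). This reproduces the Viterbo isomorphism $H^{n-*}(DT^*M) \cong \HF^{<\delta}_*(DT^*M)$, under which $1 \in H^0(DT^*M)$ is sent to $F_\delta$, represented by a local maximum of $H|_{DT^*M}$ in Floer grading $n$. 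Applying the Abbondandolo--Schwarz isomorphism of Theorem \ref{thm:loop} (1) to the same $H$, all orbits lie in $c(M) \subset \Lambda^{<\delta}_M$ and the action $\mca{A}_H$ restricted to $c(M) \cong M$ equals $-H|_M$. The Floer complex is thus identified with the Morse complex of $-H|_M$, which computes $H_*(M) \cong H_n(\Lambda^{<\delta}_M)$. The resulting composition $H^{n-*}(M) \cong H_*(M)$ is Poincar\'e duality on $M$, which sends $1 \in H^0(M)$ to the fundamental class $[M] \in H_n(M)$, i.e., to $c_*[M]$. This completes the proof.

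The main obstacle is justifying the last identification rigorously: one must check that the specific Abbondandolo--Schwarz isomorphism built in section 6 really restricts, in the $C^2$-small regime, to Poincar\'e duality on $M$. Working with $\Z_2$-coefficients removes sign issues, but the matching of Conley--Zehnder indices with Morse indices on $M$, and of Floer trajectories connecting constant orbits with the corresponding Morse trajectories of $H|_M$, must be extracted from the explicit chain-level construction postponed to section 6.
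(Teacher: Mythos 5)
Your reduction to small $\delta$ via Theorem \ref{thm:loop}~(2) is exactly the first step of the paper's proof, and your observation that $\Lambda^{<\delta}_M$ deformation retracts onto $c(M)$, so that $c_*[M]$ generates $H_n(\Lambda^{<\delta}_M)\cong\Z_2$, is also correct and relevant. But after that you take an unnecessary detour into chain-level Morse/Floer comparisons, and the step you flag at the end as the ``main obstacle'' --- that the Abbondandolo--Schwarz isomorphism restricts in the $C^2$-small regime to Poincar\'e duality on $M$ --- is indeed a genuine gap in what you wrote. To make it rigorous one would have to unwind the construction of the chain isomorphism via the moduli spaces $\mca{M}(\gamma,x)$ in section~6, match Conley--Zehnder indices with Morse indices at constant orbits, and show the hybrid trajectories degenerate to Morse flow lines of $-H|_M$; none of this is done, and it is a substantial piece of work.

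The point is that this entire chain-level analysis is avoidable. You already have all the pieces of the paper's argument in hand: for $\delta$ small, $\HF^{<\delta}_n(DT^*M)\cong H^0(DT^*M)\cong\Z_2$ is generated by $F_\delta=\iota_\delta(1)$, and $H_n(\Lambda^{<\delta}_M)\cong H_n(M;\Z_2)\cong\Z_2$ is generated by $c_*[M]$ (assuming, as one may, that $M$ is connected). A $\Z_2$-linear isomorphism between two modules, each isomorphic to $\Z_2$, necessarily sends the nonzero element to the nonzero element; since $F_\delta$ and $c_*[M]$ are precisely those nonzero elements, the isomorphism of Theorem~\ref{thm:loop}~(1) must send $F_\delta$ to $c_*[M]$, with no need to identify the map explicitly. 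This is the paper's proof. Your attempt to pin down what the isomorphism ``really is'' answers a stronger question than the corollary asks, and it is that extra ambition that creates the gap.
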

\begin{proof}
Theorem \ref{thm:loop} (2) implies that it is enough to prove the assertion for sufficiently small $a>0$.
We may assume that $M$ is connected. 
When $a>0$ is sufficiently small, both $\HF^{<a}_n(DT^*M)$ and $H_n(\Lambda^{<a}_M)$ are isomorphic to $\Z_2$, and they are generated by 
$F_a$ and $c_*[M]$, respectively. Hence the assertion is obvious in this case. 
\end{proof}

\begin{cor}\label{cor:loop2}
For any $0<a \le \infty$, $F_a \ne 0$. 
\end{cor}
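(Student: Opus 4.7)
The plan is to reduce the claim immediately to a statement about singular homology of the loop space via Corollary \ref{cor:loop}, and then exhibit a left inverse to the constant-loop inclusion at the level of $H_*$ given by the evaluation map.

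More concretely, by Corollary \ref{cor:loop} the natural isomorphism $\HF^{<a}_n(DT^*M) \cong H_n(\Lambda^{<a}_M)$ sends $F_a$ to $c_*[M]$, where $c \colon M \to \Lambda^{<a}_M$ is the inclusion of $M$ as the space of constant loops and $[M] \in H_n(M;\Z_2)$ is the fundamental class. Since this is an isomorphism, it suffices to show that $c_*[M] \ne 0$ in $H_n(\Lambda^{<a}_M)$.

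To see this, I would observe that the evaluation $\ev \colon \Lambda^{<a}_M \to M$, $\gamma \mapsto \gamma(0)$, satisfies $\ev \circ c = \id_M$, so at the level of singular homology $\ev_* \circ c_* = \id_{H_*(M;\Z_2)}$. The class $[M]$ is nonzero in $H_n(M;\Z_2)$ because $M$ is a closed manifold (and we work with $\Z_2$-coefficients, so no orientability assumption is even needed for the fundamental class to exist and be nonzero). Therefore $\ev_*(c_*[M]) = [M] \ne 0$, forcing $c_*[M] \ne 0$.

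There is essentially no obstacle here: the only nontrivial input is Corollary \ref{cor:loop} itself (whose proof is a standard low-action computation reducing to the case of small $a$), and once that identification is made the argument is a one-line retraction-in-homology argument. One could replace the evaluation map by the projection $\pi_M$-induced map coming from $DT^*M$ if desired, but using $\ev$ on the loop-space side is the cleanest route.
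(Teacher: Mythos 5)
Your proof is correct and follows exactly the same route as the paper: invoke Corollary \ref{cor:loop} to identify $F_a$ with $c_*[M]$, then use that $\ev_*$ is a left inverse of $c_*$ (since $\ev \circ c = \id_M$) to conclude $c_*[M] \ne 0$. No meaningful difference.
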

\begin{proof}
$c_*: H_*(M) \to H_*(\Lambda^{<a}_M)$ is injective, since $\ev_*: H_*(\Lambda^{<a}_M) \to H_*(M)$ is a left inverse of $c_*$. 
Hence $c_*[M] \ne 0$. Therefore Corollary \ref{cor:loop} implies $F_a \ne 0$. 
\end{proof}

\section{Spectral invariants} 
The notion of \textit{spectral invariants} has been developed by several authors
(see \cite{FGS}, \cite{MS} section 12.4, and references therein). 
In this section, we define the spectral invariants for admissible Hamiltonians on Liouville domains.  
In section 4.1, we define the spectral invariants and summerize their basic properties. 
In section 4.2, we establish a criterion for Hamiltonians to have nonconstant periodic orbits (Proposition \ref{prop:criterion}), 
 which is used in the proof of our main result, Theorem \ref{thm:main}.

\subsection{Definition and basic properties}
Let $(X,\lambda)$ be a Liouville domain, 
and $H \in \mca{H}_\ad(X,\lambda)$.
For any $a \in \R$, there exists a long exact sequence 
\[
\xymatrix{
\cdots \HF_*^{<a}(H) \ar[r]^-{i^a_*} & \HF_*(H) \ar[r]^-{j^a_*} & \HF^{\ge a}_*(H) \ar[r]^-{\partial^a_*} &\HF^{<a}_{*-1}(H) \cdots.
}
\]
Recall that there exists a natural isomorphism 
$\Psi_H:\HF_*(H) \to \HF^{<a_H}_*(X,\lambda)$.
Then, for any $x \in \HF^{<a_H}_*(X,\lambda)$, we define the \textit{spectral invariant} $\rho(H:x)$ by 
\[
\rho(H:x):= \inf\{a \in \R  \mid \Psi_H^{-1}(x) \in \Im i^a_* \} = \inf \{a \in \R \mid j^a_*(\Psi_H^{-1}(x))=0 \}.
\]
Notice that $\rho(H:0)=-\infty$.
We abbreviate $\rho(H: F_{a_H})$ as $\rho(H)$.

In the next Lemma \ref{lem:spectral}, 
we summerize basic properties of the spectral invariants. 
First we introduce some notations:
\begin{itemize}
\item 
For $H \in C^\infty(S^1 \times \hat{X})$ and a homotopy class $\alpha$ of free loops on $X$, 
we define 
\begin{align*}
\mca{P}^\alpha(H)&:= \{ x \in \mca{P}(H) \mid [x] = \alpha \}, \qquad
\Spec^\alpha(H):= \{ \mca{A}_H(x) \mid x \in \mca{P}^\alpha(H) \}, \\
\Spec(H)&:= \{ \mca{A}_H(x) \mid x \in \mca{P}(H) \}. 
\end{align*}
It is easy to see that for any $H \in C^\infty(S^1 \times \hat{X})$ which is linear at $\infty$ and $a_H \notin \Spec(X,\lambda)$, 
$\Spec^\alpha(H), \Spec(H) \subset \R$ are closed, nowhere dence sets.
\item For $H \in C_0^\infty(S^1 \times \hat{X})$, 
its \textit{Hofer norm} is defined as
\[ 
\|H\|:= \int_{S^1} \max H_t - \min H_t \, dt.
\]
\end{itemize}
\begin{lem}\label{lem:spectral}
\begin{enumerate}
\item For any $H \in \mca{H}_{\ad}(X,\lambda)$ and $x \in \HF^{<a_H,\alpha}_*(X,\lambda) \setminus \{0\}$,  $\rho(H:x) \in \Spec^\alpha(H)$.
\item Suppose $H, K \in \mca{H}_{\ad}(X,\lambda)$ satisfy that $\supp (H-K)$ is compact. 
Then, for any $x \in \HF_*^{<a_H}(X,\lambda) \setminus \{0\}$, $|\rho(H:x) - \rho(K:x)| \le \| H- K\|$.
\item Suppose $H, K \in \mca{H}_{\ad}(X,\lambda)$ satisfy (P0), (P1), (P2) in section 2.3.
Then, for any $x,y \in \HF_*^{<a_H}(X,\lambda)$, 
$\rho(H*K: x*y) \le \rho(H:x) + \rho(K:y)$.
\end{enumerate}
\end{lem}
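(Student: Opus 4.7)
The plan is to prove the three properties in order, each time translating between the $\HF_*(H)$ description of $\rho(H:x)$ and the direct-limit description $\HF^{<a_H}_*(X,\lambda)$ via the isomorphism $\Psi_H$, and exploiting the structure (action filtration, monotonicity, pair-of-pants product) already set up in Section 2.

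For (1), first I would observe that since $H \in \mca{H}_\ad(X,\lambda)$ we have $a_H \notin \Spec(X,\lambda)$, so by Lemma \ref{lem:convexity} every $1$-periodic orbit of $X_{H_t}$ lies in some $X(r_0)$; nondegeneracy then forces $\mca{P}(H)$ to be finite, so $\Spec^\alpha(H) \subset \R$ is a finite (in particular closed and nowhere dense) set. The truncated chain complex $\CF^{<a,\alpha}_*(H)$ changes only when $a$ crosses an element of $\Spec^\alpha(H)$, so both $\HF^{<a,\alpha}_*(H)$ and the map $i^a_*$ are locally constant on $\R \setminus \Spec^\alpha(H)$. Next I would bound $\rho(H:x)$: for $a > \max\Spec^\alpha(H)$ one has $i^a_* = \id$ so $\rho(H:x) < \infty$, and for $a < \min\Spec^\alpha(H)$ one has $\HF^{<a,\alpha}_*(H) = 0$ so $j^a_*\bigl(\Psi_H^{-1}(x)\bigr) = \Psi_H^{-1}(x) \neq 0$, giving $\rho(H:x) > -\infty$. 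Finally, if $\rho(H:x) \notin \Spec^\alpha(H)$, local constancy on an open interval around $\rho(H:x)$ would let me strictly decrease $a$ while preserving the property $j^a_* \circ \Psi_H^{-1}(x) = 0$, contradicting the infimum.

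For (2), I would invoke Proposition \ref{prop:monotonicity} with $\Delta := \int_{S^1} \max(H_t - K_t)\,dt$; note that $a_H = a_K$ since $H - K$ has compact support, so the hypothesis $a_H \le a_K$ is trivially satisfied and the monotonicity homomorphism $\Phi^{(-\infty,a),(-\infty,a+\Delta)}_{HK}: \HF^{<a,\alpha}_*(H) \to \HF^{<a+\Delta,\alpha}_*(K)$ is defined. The naturality properties in Proposition \ref{prop:monotonicity} together with the construction of $\Psi_H$ in Lemma \ref{lem:truncated} yield a commutative diagram relating $i^a_*$ for $H$ and $i^{a+\Delta}_*$ for $K$ via $\Psi_H, \Psi_K$. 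Thus for any $a > \rho(H:x)$ one has $\Psi_H^{-1}(x) \in \Im i^a_*$, which pushes forward to $\Psi_K^{-1}(x) \in \Im i^{a+\Delta}_*$, giving $\rho(K:x) \le a+\Delta$; taking the infimum yields $\rho(K:x) - \rho(H:x) \le \Delta \le \|H-K\|$. Swapping $H$ and $K$ with $\Delta' := \int \max(K_t - H_t)\,dt$ gives the reverse bound.

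For (3), I would use the pair-of-pants product on truncated Floer homology of Liouville domains,
\[
\HF^{<a}_i(X,\lambda) \otimes \HF^{<b}_j(X,\lambda) \to \HF^{<a+b}_{i+j-n}(X,\lambda),
\]
together with the final commutative square in Section 2.3, which says $\Psi_{H*K}$ intertwines the product of $H$ and $K$ with the truncated product on the Liouville domain. If $a > \rho(H:x)$ and $b > \rho(K:y)$, then $\Psi_H^{-1}(x)$ lifts through $i^a_*$ and $\Psi_K^{-1}(y)$ lifts through $i^b_*$, and the filtered-product diagram produces a lift of $\Psi_{H*K}^{-1}(x*y)$ through $i^{a+b}_*$. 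Hence $\rho(H*K:x*y) \le a+b$, and taking infima gives the claim.

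The least mechanical step is the Lipschitz estimate (2), which ultimately rests on the fact that the monotonicity homomorphism shifts action filtration by exactly $\Delta$; this is the essential Floer-theoretic input and requires the $C^0$-estimate of Lemma \ref{lem:convexity} applied to a monotone homotopy. The remaining steps are essentially formal manipulations of the filtered long exact sequence and the filtered product structure established in Section 2.
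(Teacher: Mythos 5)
Your proof is correct and follows essentially the same approach as the paper. For (1) you use the closedness (in fact finiteness) of $\Spec^\alpha(H)$ and local constancy of the action-filtered homology to derive a contradiction with the infimum, which is exactly what the paper does; your extra care in checking $\rho(H:x) \notin \{\pm\infty\}$ is a worthwhile detail that the paper only partly spells out. For (2) you apply the monotonicity homomorphism from Proposition \ref{prop:monotonicity} with the optimal shift $\Delta$ and then symmetrize (the paper takes the slightly coarser shift $\|H-K\|$ directly, which is permissible since $\Delta \le \|H-K\|$), and for (3) you invoke the compatibility of the pair-of-pants product with the action filtration and $\Psi_H$ exactly as the paper does via the last diagram of Section 2.3.
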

\begin{proof}
(1): Suppose the contrary: i.e. $\rho(H:x) \notin \Spec^\alpha(H)$. 
We abbreviate $\rho(H:x)$ as $\rho$. 
Since $x \ne 0$, $\rho \ne -\infty$. 
Since $\Spec^\alpha(H)$ is closed, there exists $\varepsilon>0$ such that 
$[\rho-\varepsilon, \rho+\varepsilon] \cap \Spec^\alpha(H)=\emptyset$.
Hence $\HF_*^{[\rho-\varepsilon,\rho+\varepsilon),\alpha}(H)=0$, therefore 
$\HF_*^{<\rho-\varepsilon,\alpha}(H) \to \HF_*^{<\rho+\varepsilon,\alpha}(H)$ is isomorphic. 
Hence we get 
\[
\Im \bigl( \HF_*^{<\rho-\varepsilon,\alpha}(H) \to \HF_*^\alpha(H) \bigr) 
=\Im \bigl( \HF_*^{<\rho+\varepsilon,\alpha}(H) \to \HF_*^\alpha(H) \bigr). 
\]
However, the definition of the spectral invariant implies that 
$\Psi_H^{-1}(x) \notin \Im \bigl( \HF_*^{<\rho-\varepsilon,\alpha}(H) \to \HF_*^\alpha(H) \bigr) $ and 
$\Psi_H^{-1}(x) \in \Im \bigl( \HF_*^{<\rho+\varepsilon,\alpha}(H) \to \HF_*^\alpha(H) \bigr)$, hence a contradiction.

(2): By Proposition \ref{prop:monotonicity}, for any $a \in \R$ there exists a monotonicity homomorphism 
$\HF_*^{<a}(H) \to \HF_*^{<a+\|H-K\|}(K)$.
The commutativity of the diagram (horizontal arrows are monotonicity homomorphisms)
\[
\xymatrix{
\HF_*^{<a}(H)\ar[r]\ar[d]_{i^a_*}&\HF_*^{<a+\|H-K\|}(K) \ar[d]^{i^{a+\|H-K\|}_*}\\
\HF_*(H)\ar[r]&\HF_*(K)
}
\]
shows that $\rho(K:x) \le \rho(H:x)+\|H-K\|$. A similar argument shows that $\rho(H:x) \le \rho(K:x)+\|H-K\|$, hence (2) is proved. 

(3) follows from the fact that the isomorphism in Lemma \ref{lem:truncated} commutes with products (see the last paragraph of section 2.3). 
\end{proof}

Using Lemma \ref{lem:spectral} (2), we can define the spectral invariants for larger class of Hamiltonians. 
Let $H \in C^\infty(S^1 \times \hat{X})$ be linear at $\infty$ and $a_H \notin \Spec(X,\lambda)$, 
but $\mca{P}(H)$ may contain degenerate orbits. 
Take a sequence $(H_j)_{j=1,2,\ldots}$ of admissible Hamiltonians so that
$\supp (H_j - H)$ is compact for any $j$ and  
$\lim_{j \to \infty} \|H_j - H\| =0$. Define $\rho(H:x)$ by 
\[
\rho(H:x):= \lim_{j \to \infty} \rho(H_j:x).
\]
By Lemma \ref{lem:spectral} (2), the right hand side converges and does not depend on choices of $(H_j)_j$. 
The following lemma is immediate from Lemma \ref{lem:spectral} and the above definition.

\begin{lem}\label{lem:spectral2}
Let $H \in C^\infty(S^1 \times \hat{X})$ be linear at $\infty$ and $a_H \notin \Spec(X,\lambda)$.
\begin{enumerate}
\item For any $x \in \HF_*^{<a_H,\alpha}(X,\lambda) \setminus \{0\}$, $\rho(H:x) \in \Spec^\alpha(H)$.
\item For any $K \in C^\infty(S^1 \times \hat{X})$ such that $\supp (H-K)$ is compact and for any $x \in \HF_*^{<a_H}(X,\lambda) \setminus \{0\}$, 
$|\rho(H:x) - \rho(K:x)| \le \| H- K\|$.
\item Suppose that $2a_H \notin \Spec(X,\lambda)$. If $K \in C^\infty(S^1 \times \hat{X})$ satisfies 
$\partial_t^rH|_{t=0}=\partial_t^rK|_{t=0}$ for any integer $r \ge 0$, 
$\rho(H*K:x*y) \le \rho(H:x)+\rho(K:y)$ for any $x,y \in \HF^{<a_H}_*(X,\lambda)$. 
\end{enumerate}
\end{lem}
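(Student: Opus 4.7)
The plan is to reduce Lemma \ref{lem:spectral2} to Lemma \ref{lem:spectral} by taking limits along admissible approximations. For the entire proof, fix a sequence $(H_j)_j \subset \mca{H}_\ad(X,\lambda)$ with $\supp(H_j - H)$ compact and $\|H_j - H\| \to 0$; such a sequence exists since small $C^\infty$-perturbations supported in the bounded region $X(r_0)$ generically yield admissible Hamiltonians (note that the slopes $a_{H_j}$ can be chosen equal to $a_H$, which is not in $\Spec(X,\lambda)$, so admissibility only requires nondegeneracy of the finitely many $1$-periodic orbits in $X(r_0)$). For the limit to be useful I will also arrange $H_j \to H$ in $C^\infty_{\text{loc}}$.

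Parts (2) and (3) are then essentially formal. For (2), if $K$ also has $\supp(H-K)$ compact, pick $K_j \in \mca{H}_\ad$ approximating $K$ in exactly the same way, with $\|K_j - K\| \to 0$ and $\supp(H_j - K_j)$ compact. Lemma \ref{lem:spectral}(2) gives $|\rho(H_j:x) - \rho(K_j:x)| \le \|H_j - K_j\|$; the right side converges to $\|H-K\|$ by the triangle inequality, and the left side converges to $|\rho(H:x) - \rho(K:x)|$ by definition. Part (3) follows the same pattern: one perturbs $H_j, K_j$ slightly so that $(H_j, K_j)$ satisfies (P0), (P1), (P2) for every $j$ (this is a generic condition, and the assumption $2a_H \notin \Spec(X,\lambda)$ makes $H_j * K_j$ admissible for $j$ large since $a_{H_j * K_j} = 2a_{H_j} \to 2a_H$), apply Lemma \ref{lem:spectral}(3), and pass to the limit using the Hofer-norm continuity just established.

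Part (1) is the main obstacle and requires a compactness argument. By (2), $\rho(H_j : x) \to \rho(H : x)$; and by Lemma \ref{lem:spectral}(1), for each $j$ there exists $x_j \in \mca{P}^\alpha(H_j)$ with $\mca{A}_{H_j}(x_j) = \rho(H_j : x)$. I would then argue that the orbits $x_j$ remain in a fixed compact subset of $\hat{X}$. The point is that outside $X(r_0)$ the Hamiltonian $H_j$ is of the form $a_H r + b$, so $X_{H_j}$ is a multiple of the Reeb vector field, and $1$-periodic orbits in that region would produce Reeb orbits of period $a_H$ --- impossible since $a_H \notin \Spec(X,\lambda)$. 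Hence all $x_j$ lie in $X(r_0)$. Since $H_j \to H$ in $C^\infty_{\text{loc}}$, the $W^{1,\infty}$-bound on $x_j$ and Arzel\`a--Ascoli give a subsequence converging in $C^\infty$ to some $x_\infty : S^1 \to \hat{X}$, which is a $1$-periodic orbit of $H$ with $[x_\infty] = \alpha$, i.e.\ $x_\infty \in \mca{P}^\alpha(H)$. Then
\[
\rho(H:x) = \lim_{j} \mca{A}_{H_j}(x_j) = \mca{A}_H(x_\infty) \in \Spec^\alpha(H),
\]
which proves (1). The delicate step is the confinement of the $x_j$ to a compact region, but this uses only the standard fact that $1$-periodic orbits of a Hamiltonian linear at infinity with slope outside $\Spec(X,\lambda)$ must lie in the bounded part.
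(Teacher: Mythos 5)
Your proof is correct, and it is essentially the detailed version of what the paper asserts is "immediate from Lemma \ref{lem:spectral} and the above definition." Parts (2) and (3) follow as you describe by approximating $H$ and $K$ with admissible Hamiltonians, applying Lemma \ref{lem:spectral}, and passing to the limit using the definition of $\rho$ for degenerate Hamiltonians; part (1) genuinely requires the compactness/Arzel\`a--Ascoli argument you give (confine the orbits $x_j$ to $X(r_0)$ because $a_H \notin \Spec(X,\lambda)$, extract a $C^\infty$-convergent subsequence, and conclude $\rho(H:x)=\mca{A}_H(x_\infty) \in \Spec^\alpha(H)$), which is exactly the step the paper leaves to the reader. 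One minor remark: the convergence $\rho(H_j:x)\to\rho(H:x)$ in your argument for (1) is by the definition of $\rho(H:x)$ for non-admissible $H$, not really a consequence of part (2), but this does not affect correctness.
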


\subsection{Criterion for Hamiltonians to have nonconstant periodic orbits}

In this subsection, we prove the following criterion for Hamiltonians to have nonconstant periodic orbits. 
Recall that for any $H \in \mca{H}_\ad(X,\lambda)$,  $\rho(H:F_{a_H})$ is abbreviated as $\rho(H)$. 

\begin{prop}\label{prop:criterion}
Let $(X,\lambda)$ be a Liouville domain, 
$H \in C_0^\infty(\interior X)$, 
$\chi \in C^\infty(S^1)$, and 
$\nu \in C^\infty\bigl([1,\infty)\bigr)$.
Define $H_{\chi,\nu} \in C^\infty(S^1 \times \hat{X})$ by 
\[
H_{\chi,\nu}(t,x) := \begin{cases}
                   \chi(t) H(x) &( x \in X) \\
                   \nu(r) &\bigl(x=(z,r) \in \partial X \times [1,\infty) \bigr)
                   \end{cases}.
\]
Suppose that the following holds:
\begin{enumerate}
\item $\nu \in C^\infty\bigl([1,\infty)\bigr)$ satisfies the following properties:
\begin{enumerate}
\item There exists $1<r_0$ such that $\nu(r) \equiv 0$ on $[1,r_0]$.
\item There exist $1<r_1$ and $a_\nu \in (0,-\min H) \setminus \Spec(X,\lambda)$ 
such that $\nu'(r) \equiv a_\nu$ on $[r_1,\infty)$.
\item $S(\nu):=\sup_{r \ge 1}\,r\nu'(r)-\nu(r)< - \min H$.
\end{enumerate}
\item $\int_{S^1} \chi(t) dt=1$.
\item $F_{a_\nu} \ne 0$. 
\end{enumerate}
Under these assumptions, if  $\rho(H_{\chi,\nu}) \ne - \min H$, then there exists a 
nonconstant periodic orbit $\gamma$ of $X_H$ with $\Per(\gamma) \le 1$. 
\end{prop}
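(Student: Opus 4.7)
The plan is to prove the contrapositive: I will assume that every nonconstant periodic orbit of $X_H$ has period strictly greater than $1$ and deduce $\rho(H_{\chi,\nu}) = -\min H$. First I would enumerate the contractible $1$-periodic orbits of $X_{H_{\chi,\nu}}$. In $\interior X$, any such orbit $x(t)$ satisfies $\dot{x}(t) = \chi(t) X_H(x(t))$, so after the time reparametrization $\sigma(t) := \int_0^t \chi(u)\, du$ (well-defined modulo $\Z$ since $\int_{S^1}\chi = 1$) it corresponds to a $1$-periodic orbit of $X_H$; the hypothesis forces this to be a constant at some $x_0 \in \text{Crit}(H)$, with action $-H(x_0)$. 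On the conical end, $X_{H_{\chi,\nu}} = \nu'(r) R_\lambda$ with $R_\lambda$ the Reeb vector field on $\partial X$, so the $1$-periodic orbits there are either constants on $\partial X \times [1, r_0]$ (with action $0$) or reparametrized closed Reeb orbits on levels where $\nu'(r) \in \Spec(X,\lambda)$ (with actions $r\nu'(r) - \nu(r) \in (0, S(\nu)]$). Hence, invoking hypothesis (1c),
\[
\Spec^0(H_{\chi,\nu}) \subset \bigl\{ -H(x_0) \bigm| x_0 \in \text{Crit}(H) \bigr\} \cup [0, S(\nu)] \subset (-\infty, -\min H].
\]
Since $F_{a_\nu} \ne 0$ by hypothesis (3), Lemma~\ref{lem:spectral2}(1) gives $\rho(H_{\chi,\nu}) \in \Spec^0(H_{\chi,\nu})$, which yields the upper bound $\rho(H_{\chi,\nu}) \le -\min H$.

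The substantive step is the matching lower bound $\rho(H_{\chi,\nu}) \ge -\min H$. For this I would consider the one-parameter deformation $H^s_{\chi,\nu}(t,x) := \chi(t)\, s H(x)$ on $X$, $\nu(r)$ on the conical end, for $s \in [0,1]$. The hypothesis persists for every $s \in [0,1]$, since nonconstant $1$-periodic orbits of $X_{sH}$ correspond to orbits of $X_H$ whose primitive period divides $s \le 1$; hence the same enumeration yields $\Spec^0(H^s_{\chi,\nu}) \subset \{-s H(x_0)\} \cup [0, S(\nu)]$, a nowhere-dense union of continuously varying branches in $\R$: the linear branches $s \mapsto -s H(x_0)$ emanating from the origin, together with the horizontal Reeb branches at levels in $(0, S(\nu)]$. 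By Lemma~\ref{lem:spectral2}(2), $s \mapsto \rho(H^s_{\chi,\nu})$ is continuous. For $s > 0$ sufficiently small, $sH$ is $C^2$-small on $X$ and a standard Morse-theoretic computation of filtered Floer homology below the smallest positive Reeb action identifies $F_{a_\nu}$ with the Morse index-$0$ critical point of $sH$ (the global minimum), giving $\rho(H^s_{\chi,\nu}) = -s \min H$ for such $s$. My goal is then to propagate this equality along the top branch $s \mapsto -s\min H$ all the way up to $s = 1$.

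The hard part will be this propagation: at each crossing of the minimum branch $s \mapsto -s\min H$ with a horizontal Reeb branch (or with another critical-value branch), both values lie simultaneously in $\Spec^0(H^s_{\chi,\nu})$, and neither continuity nor the Lipschitz bound alone forces $\rho$ to remain on the minimum branch. The resolution I expect is a chain-level continuation argument: under the no-nonconstant-orbit hypothesis no new Floer trajectory from the minimum generator to a Reeb or other generator can appear along the deformation without producing a nonconstant $1$-periodic orbit in $\interior X$, so the continuation map preserves the minimum generator as the representative of $F_{a_\nu}$ throughout. This pins $\rho(H^s_{\chi,\nu})$ to the branch $s \mapsto -s\min H$ for all $s \in [0,1]$; evaluating at $s = 1$ then contradicts the assumption $\rho(H_{\chi,\nu}) \ne -\min H$ and completes the proof.
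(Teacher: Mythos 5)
Your overall strategy — contrapositive, compute the spectrum, establish $\rho(H_{\chi,\nu}) = -\min H$ by propagating the equality $\rho = -s\min H$ from small $s$ to $s=1$ — mirrors the structure of the paper's proof (Steps 1--4). However your single‑parameter deformation, which scales $H$ by $s$ but keeps $\nu$ fixed, breaks in two places, and you correctly sense the second of these but the fix you propose does not hold up.

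First, the base case. Lemma~\ref{lem:Morse} requires the Hamiltonian to have small $C^2$‑norm on the whole region $X(r)$ where it is not yet linear. With $\nu$ unscaled, $H^s_{\chi,\nu}$ restricted to $\partial X \times [1,r_1]$ is just $\nu$, whose $C^2$‑norm is of order $a_\nu$ and does not shrink with $s$. So the identification of the filtered Floer complex with a Morse complex is not available for small $s$, and your starting point $\rho(H^s_{\chi,\nu}) = -s\min H$ is not justified. The paper scales $\nu$ by a small $\delta$ precisely so that $H_{\varepsilon,\delta\nu}|_{X(r_1)}$ is $C^2$‑small and Lemma~\ref{lem:Morse} applies (its Step~1).

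Second, and more seriously, the branch crossings. Since you do not scale $\nu$, the constant Reeb branches fill $(0,S(\nu)]$ independently of $s$, while the minimum branch $-s\min H$ sweeps up from $0$ to $-\min H > S(\nu)$; crossings are unavoidable. You flag this and then appeal to a chain‑level continuation argument — that no Floer trajectory from the minimum generator to a Reeb generator can appear without producing a nonconstant $1$‑periodic orbit of $X_H$. That inference is not valid: Floer trajectories are solutions of the Floer PDE, not reparametrized Hamiltonian orbits, and once $-s\min H$ exceeds a Reeb action the energy obstruction $\mca{A}(x) - \mca{A}(y) \ge 0$ is satisfied, so such trajectories can in principle exist and contribute to the differential or continuation map. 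Nothing in the hypothesis on periods of $X_H$‑orbits forbids them. This is exactly the difficulty the paper's proof is organized to avoid: by taking $\delta$ small enough that $\delta S(\nu) < -\varepsilon_0\min H$, the minimum branch stays strictly above every Reeb branch throughout the $\varepsilon$‑propagation (Step~2), so the only branches it could swap onto are the rays $-\varepsilon c$, $c \in \CrV(H)$, which never cross one another for $\varepsilon > 0$ and are excluded by nowhere‑denseness. The price is that $\nu$ then has to be re‑scaled from $\delta\nu$ back to $\nu$, and this is done in Step~3 by a genuinely different argument: one uses Lemma~\ref{lem:convexity3} to show that all Floer trajectories relevant to the action window near $-\min H$ are confined to $X(r_0)$, where the two Hamiltonians agree, so the high‑action truncated Floer homologies are identified and the spectral invariant cannot drop. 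Without this extra step (or something replacing it), your propagation along the single $s$‑parameter cannot be completed.
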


First we prove the following lemma. 
For any $K \in C^\infty(\hat{X})$, we define $\bar{K} \in C^\infty(S^1 \times \hat{X})$ by 
$\bar{K}(t,x):=K(x)$. 

\begin{lem}\label{lem:Morse}
Fix $r>1$. 
Let $K$ be a Morse function on $\hat{X}$, such that 
$\bar{K} \in \mca{H}_\ad(X,\lambda)$ and 
$K$ is linear on $\partial X \times [r,\infty)$. 
If $C^2$-norm of $K|_{X(r)}$ is sufficiently small, 
$\rho(\bar{K})=-\min K$. 
\end{lem}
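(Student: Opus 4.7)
The plan is as follows. First I would identify $\mca{P}(\bar{K})$ and the action spectrum $\Spec(\bar{K})$. On the linear region $\partial X \times [r, \infty)$, where $K(z,r')=a_{\bar{K}}r'+b$, a direct computation with $\hat{\omega}=dr'\wedge\lambda+r'd\lambda$ gives $X_K=a_{\bar{K}}R$ (the lifted Reeb vector field), and admissibility of $\bar{K}$ forces $a_{\bar{K}}\notin\Spec(X,\lambda)$, so any $1$-periodic orbit there would be a Reeb orbit of period $a_{\bar{K}}$, ruling it out. On $X(r)$, the $C^2$-smallness of $K$ makes $X_K$ uniformly small, precluding any nonconstant closed orbit of period $\le 1$. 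Hence $\mca{P}(\bar{K})$ consists exactly of constant orbits at critical points of $K$, with $\mca{A}_{\bar{K}}(p)=-K(p)$ for each such $p$.

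Next I would invoke the standard Morse--Floer identification for $C^2$-small time-independent Morse Hamiltonians: $\CF_*(\bar{K})$ is canonically isomorphic to a Morse cochain complex of $K$ with $\ind_{\CZ}(p)=n-\ind_{\text{Morse}}(p)$, and the action filtration corresponds to the filtration by $-K$. Since $\CF_{n+1}(\bar{K})=0$ (no critical point has Morse index $-1$), cycles in $\CF_n(\bar{K})$ inject into $\HF_n(\bar{K})$ and every class admits a unique cycle representative in $\CF_n(\bar{K})$. Under this identification, $\Psi_{\bar{K}}^{-1}(F_{a_{\bar{K}}})\in\HF_n(\bar{K})$ corresponds to $1\in H^0(X)$, whose unique Morse cocycle representative is $\sigma:=\sum_{p} p$ summed over local minima of $K$.

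The maximum action of $\sigma$ equals $-\min K$, attained at a global minimum of $K$ (which lies in $X(r)$ since $a_{\bar{K}}>0$ forces $K\to+\infty$ at infinity). By uniqueness of the cycle representative, $\Psi_{\bar{K}}^{-1}(F_{a_{\bar{K}}})\in\Im(\HF^{<a}_n(\bar{K})\to\HF_n(\bar{K}))$ if and only if $\sigma\in\CF^{<a}_n(\bar{K})$, equivalently $a>-\min K$, yielding $\rho(\bar{K})=-\min K$. The hard part will be establishing the Morse--Floer identification compatibly with both the action filtration and the class $F_{a_{\bar{K}}}$: this rests on the classical Floer--Hofer--Salamon reduction of Floer trajectories to negative gradient flow lines for $C^2$-small time-independent Morse Hamiltonians (extended to Liouville domains by Viterbo), together with naturality of the monotonicity map $\iota_{a_{\bar{K}}}:\HF^{<\delta}_*(X,\lambda)\to\HF^{<a_{\bar{K}}}_*(X,\lambda)$ used to define $F_{a_{\bar{K}}}$, which transports the identification $1\in H^0(X)\cong\HF^{<\delta}_n(X,\lambda)$ at small slope to the actual $\bar{K}$.
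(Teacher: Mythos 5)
Your proposal is correct and follows essentially the same route as the paper: identify the Floer complex of the $C^2$-small $\bar K$ with the Morse cochain complex of $K$, observe that $F_{\bar K}$ corresponds to $1\in H^0(X)$ whose unique cycle representative (unique since $\CF_{n+1}(\bar K)=0$) is the sum of all index-$0$ critical points with coefficient $1$, and read off $\rho(\bar K)=\max_{q}(-K(q))=-\min K$. The paper's proof is terser but contains the same ingredients, in particular the same observation that the coefficients on local minima are forced to be $1$; your extra remarks on why $\mca{P}(\bar K)$ consists only of constant orbits and on the compatibility of $F_{a_{\bar K}}$ with the Morse--Floer identification are elaborations of points the paper takes as standard.
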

\begin{proof}
If $C^2$-norm of $K|_{X(r)}$ is sufficiently small,
the Floer complex of $\bar{K}$ is identified with 
the Morse complex of $K$, and it induces an isomorphism 
$\HF_*(\bar{K}) \cong H^{n-*}(X)$. 
Since $F_{\bar{K}} \in \HF_n(\bar{K})$ corresponds to $1 \in H^0(X)$ in this isomorphism, 
$\sum_{q \in \CrP_0(K) }a_q [q]$ represents $F_{\bar{K}}$ if and only if 
$a_q=1$ for any $q \in \CrP_0(K):=\{ \text{critical points of $K$ with Morse index $0$}\}$. 
Hence $\rho(\bar{K}) = \max_{q \in \CrP_0(K)} -K(q) = -\min K$. 
\end{proof}

Now we return to the proof of Proposition \ref{prop:criterion}.
It is enough to show the following claim: 
\begin{claim}
Suppose that assumptions (1), (2), (3) in Proposition \ref{prop:criterion} hold. 
If every nonconstant periodic orbit of $X_H$ has period strictly larger than $1$, 
then $\rho(H_{\chi,\nu})=-\min H$.
\end{claim}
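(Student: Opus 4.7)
The plan is to prove the matching bounds $\rho(H_{\chi,\nu}) \le -\min H$ and $\rho(H_{\chi,\nu}) \ge -\min H$. Both arguments begin from the same setup: $a_{H_{\chi,\nu}} = a_\nu$ is outside $\Spec(X,\lambda)$ by $(1b)$, and $F_{a_\nu} \ne 0$ by $(3)$, so Lemma \ref{lem:spectral2}(1) places $\rho(H_{\chi,\nu})$ inside $\Spec(H_{\chi,\nu})$; it then suffices to analyze the action spectrum and identify which element is achieved.

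For the upper bound I would classify the $1$-periodic orbits of $X_{H_{\chi,\nu}}$ and their actions. On $\interior X$ we have $X_{H_{\chi,\nu}} = \chi(t)X_H$; setting $s(t) := \int_0^t \chi$, a $1$-periodic orbit has $x(t) = \varphi_{s(t)}(x(0))$ where $\varphi_\tau$ denotes the flow of $X_H$, and $s(1) = 1$ from $(2)$ forces $x(0)$ to be fixed by $\varphi_1$. By the Claim's hypothesis such a fixed point is a critical point of $H$, so $x \equiv q$ is constant with action $-H(q) \le -\min H$. On $\partial X \times [1,\infty)$, $H_{\chi,\nu} = \nu(r)$ with $X_{\nu(r)} = \nu'(r)R$; conditions $(1a)$--$(1b)$ confine nontrivial $1$-periodic orbits to the shell $(r_0,r_1)$ at radii where $\nu'(r) \in \Spec(X,\lambda)$, with actions $r\nu'(r) - \nu(r) \le S(\nu) < -\min H$ by $(1c)$, while the degenerate constants on $[1,r_0]$ carry action $-\nu(r) = 0 < -\min H$. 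Hence $\sup\Spec(H_{\chi,\nu}) = -\min H$, giving $\rho(H_{\chi,\nu}) \le -\min H$.

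For the lower bound I would fix $\varepsilon > 0$ and construct a nondegenerate $K \in \mca{H}_{\ad}(X,\lambda)$ with $\|K - H_{\chi,\nu}\| < \varepsilon$ in two steps: (i) perturb $H$ to a Morse function $H'$ with $|H-H'|_{C^0} < \varepsilon$, small enough that nonconstant orbits of $X_{H'}$ still have period $>1$ and that $H'$ has a unique global minimum near $q_{\min}$; (ii) add a small time-dependent perturbation to resolve the degeneracies at the constant orbits above $\CrP(H')$ and on the flat region near $\partial X$, leaving $K \equiv \nu$ on the cone. The $1$-periodic orbits of $K$ are then one nondegenerate orbit per critical point of $H'$ with action near $-H'(q)$, together with cone orbits of action $\le S(\nu) + O(\varepsilon)$. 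Choosing $b > -\min H' + O(\varepsilon)$ and $a$ in the gap between $-\min H'$ and the next-largest action value, the truncation $\HF^{[a,b)}_n(K)$ is generated by the single orbit near $q_{\min}$. It remains to verify that $F_K := \Psi_K^{-1}(F_{a_\nu})$ maps nontrivially under the quotient $\HF^{<b}_n(K) \to \HF^{[a,b)}_n(K)$; this forces $\rho(K) \ge a$, and Lemma \ref{lem:spectral2}(2) then yields $\rho(H_{\chi,\nu}) \ge -\min H - O(\varepsilon)$. Letting $\varepsilon \to 0$ concludes.

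The main obstacle is this last nonvanishing: the cohomological unit $1 \in H^0(X)$ must appear at the top action level in the Floer representative of $F_K$. Morally this follows from the PSS-type identification $\HF^{<\delta}_*(X,\lambda) \cong H^{n-*}(X)$ from \cite{Viterbo}, since in a Morse-theoretic model for $\CF(K)$ restricted to the action window $[a,b)$ the unit is represented by a sum of global minima of $H'$, and the unique such minimum contributes the generator of the truncation. Rigorously establishing this for $K$ that is not $C^2$-small --- so Lemma \ref{lem:Morse} does not directly apply --- is the real technical point, and would likely require either a filtered continuation argument from a small Morse Hamiltonian $K_0$ up to $K$ (controlling the action filtration throughout so that the image of $F_{K_0}$ tracks into the window $[a,b)$), or a direct PSS computation carried out within the Liouville setup.
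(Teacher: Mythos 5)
Your overall strategy — establish the matching bounds $\rho(H_{\chi,\nu})\le -\min H$ and $\rho(H_{\chi,\nu})\ge -\min H$ — is correct, and the upper-bound half is carried out correctly: classifying the $1$-periodic orbits of $X_{H_{\chi,\nu}}$ and using Lemma \ref{lem:spectral2}(1) together with assumption (1c) does give $\rho(H_{\chi,\nu})\le -\min H$. But the lower bound is where all the content lies, and you stop short of it: you explicitly note that showing $F_K$ survives to the top action window $[a,b)$ is ``the real technical point,'' and your proposed remedy --- ``a filtered continuation argument from a small Morse Hamiltonian $K_0$ up to $K$, controlling the action filtration throughout'' --- is precisely what remains to be done. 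Naming the technique is not the same as performing it, and there is nothing routine about it.

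The paper's proof is exactly such a filtered continuation, executed in three nontrivial stages. First (Step~1) it treats $H_{\varepsilon,\delta\nu}$ with \emph{both} $\varepsilon$ and $\delta$ small, so that the whole Hamiltonian is $C^2$-small on $X(r_1)$ and Lemma \ref{lem:Morse} applies directly; crucially, scaling $H$ to $\varepsilon H$ (rather than perturbing $H$ as you propose) automatically preserves the hypothesis that nonconstant orbits have period $>1$, since the periods only get longer. Your step (i) --- perturb $H$ in $C^0$ to a Morse $H'$ so that nonconstant orbits of $X_{H'}$ still have period $>1$ --- is not guaranteed to be achievable: small $C^0$ or even $C^1$ perturbations can introduce new short periodic orbits, and you would need a separate argument that this can be avoided. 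Second (Step~2), the paper increases $\varepsilon$ to $1$ via a connectedness argument, relying on the specific structure $\Spec(H_{\varepsilon,\delta\nu}) \subset (-\infty,\delta S(\nu)] \cup -\varepsilon\,\CrV(H)$ and on keeping $\delta$ small so that $\delta S(\nu)$ stays strictly below $-\varepsilon_0\min H$; this spectral gap is what makes the set $\{\varepsilon : \rho(H_{\varepsilon,\delta\nu})=-\varepsilon\min H\}$ open. Third (Step~3), it increases $\delta$ to $1$ by constructing Hamiltonians $G^\pm$ that agree on $X(r_0)$ and invoking Lemma \ref{lem:convexity3} to show that the truncated Floer groups at the top action window coincide, so that nonvanishing passes from $G^-$ to $G^+$. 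Only then (Step~4) is $\chi$ deformed from the constant $1$. Each of these steps uses a specific structural input (the spectral gap, the convexity lemma confining trajectories, the scaling trick) that a single direct perturbation argument does not supply. So what you have is a correct skeleton with the load-bearing lemma --- the propagation of the Morse-level computation up to the full Hamiltonian --- left unproved.
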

The proof consists of $4$ steps. 
In the course of the proof, we use the following notation: 
for any $a \in \R$, we also denote the constant function $S^1 \to \R; t \mapsto a$ by $a$. 
For instance, $H_{a,\nu}$ denotes the function on $S^1 \times \hat{X}$ defined as
\[
H_{a,\nu}(t,x):=\begin{cases} aH(x) &(x \in X) \\ \nu(r) &\bigl(x=(z,r) \in \partial X \times [1,\infty)\bigr) \end{cases}.
\]

\textbf{Step 1:}
There exist $\varepsilon_0, \delta_0 \in (0,1)$ such that: for any 
$\varepsilon \in (0,\varepsilon_0]$ and $\delta \in (0,\delta_0]$ such that $\delta a_\nu \notin \Spec(X,\lambda)$, 
$\rho(H_{\varepsilon,\delta\nu})= - \varepsilon \min H$. 

For any $\varepsilon,\delta >0$, there exists a sequence of Morse functions $(H_j)_{j=1,2,\ldots}$ on $\hat{X}$ such that 
$\supp (H_{\varepsilon,\delta\nu}-\bar{H_j})$ is contained in $X(r_1)$, 
and $\lim_{j \to \infty} \bar{H_j}= H_{\varepsilon,\delta\nu}$ in $C^2$-norm.
When $\varepsilon$ and $\delta$ are sufficiently small,
$C^2$-norm of $H_{\varepsilon,\delta\nu}|_{X(r_1)}$ is sufficiently small. 
Hence
\[
\rho(H_{\varepsilon,\delta\nu}) = \lim_{j \to \infty} \rho(\bar{H_j}) = \lim_{j \to \infty} - \min H_j = - \min H_{\varepsilon,\delta\nu}
=-\varepsilon \min H,
\]
where the second equality follows from Lemma \ref{lem:Morse}.

\textbf{Step 2:} For any $0<\delta<\min \biggl\{ \delta_0, \frac{-\varepsilon_0 \min H}{S(\nu)} \biggr\}$ such that $\delta a_\nu \notin \Spec(X,\lambda)$, 
 $\rho(H_{1,\delta\nu})=-\min H$.

For any $\varepsilon \in (0,1]$, $\mca{P}(\varepsilon H)$ consists of only constant loops at critical points of $H$, 
since every nonconstant periodic orbit of $X_H$ has period $>1$. 
On the otherhand, for any $x \in \mca{P}(H_{\varepsilon,\delta\nu})$ which is not contained in $X$, 
$\mca{A}_{H_{\varepsilon,\delta\nu}}(x) \le \delta S(\nu)$.
Hence $\Spec (H_{\varepsilon,\delta\nu}) \subset \bigl(-\infty,\delta S(\nu)\bigr] \cup -\varepsilon \CrV(H)$, where 
$\CrV(H)$ denotes the set of critical values of $H$. 
Since $\delta a_\nu \le a_\nu$ and $F_{a_\nu} \ne 0$, $F_{\delta a_\nu} \ne 0$. 
Hence Lemma \ref{lem:spectral2} (1) shows that 
$\rho(H_{\varepsilon,\delta\nu}) \in \bigl(-\infty,\delta S(\nu)\bigr] \cup -\varepsilon \CrV(H)$.

Let $I:=\{\varepsilon \in [\varepsilon_0,1] \mid \rho(H_{\varepsilon,\delta\nu})=-\varepsilon \min H\}$. 
Step 1 shows $\varepsilon_0 \in I$. 
Lemma \ref{lem:spectral2} (2) shows that $\rho(H_{\varepsilon,\delta\nu})$ depends continuously on $\varepsilon$, hence $I$ is closed. 
Moreover, since $\delta S(\nu) < -\varepsilon_0 \min (H)$ and $\CrV(H)$ is nowhere dence, $I$ is open.
Hence $I=[\varepsilon_0,1]$. In particular, $\rho(H_{1,\delta\nu})=-\min H$.

\textbf{Step 3:} $\rho(H_{1,\nu})=-\min H$.

Since $F_{a_\nu} \ne 0$, 
\[
\rho(H_{1,\nu}) \in \Spec (H_{1,\nu}) \subset  \bigl(-\infty,S(\nu)\bigr] \cup -\CrV(H) \subset (-\infty, -\min H].
\]
Hence $\rho(H_{1,\nu}) \le -\min H$.
We prove the opposite inequality.
Take $\delta>0$ so that $\delta a_\nu \notin \Spec(X,\lambda)$ and $0<\delta <\min \biggl\{ \delta_0, \frac{-\varepsilon_0 \min H}{S(\nu)} \biggr\}$.
For sufficiently small $c>0$, there exist $G^-, G^+ \in \mca{H}_\ad(X,\lambda)$ with the following properties:
\begin{enumerate}
\item[(a):] $G^+ - H_{1,\nu}$ and $G^- - H_{1,\delta\nu}$ are compactly supported. 
\item[(b):] $\| G^+ - H_{1,\nu}\|, \| G^- - H_{1,\delta\nu}\| <c$.
\item[(c):] $G^+ = G^-$ on $S^1 \times X(r_0)$.
\item[(d):] $G^+(t,x) \ge G^-(t,x)$ for any $t \in S^1$, $x \in \hat{X}$.
\item[(e):] There exist $a,b \in \R$ such that $G^\pm(t,z,r)=ar+b$ for any $(t,z,r) \in S^1 \times \partial X \times [1,r_0]$.
\item[(f):] Any $x \in \mca{P}(G^\pm)$ satisfying $\mca{A}_{G^\pm}(x) \ge -\min H-c$ is contained in $X$. 
\end{enumerate}
Consider the following commutative diagram:
\[
\xymatrix{
\HF_*(G^-) \ar[r]\ar[d] &\HF_*^{\ge -\min H - c}(G^-) \ar[d] \\
\HF_*(G^+) \ar[r] & \HF_*^{\ge -\min H-c}(G^+) 
},
\]
where vertical arrows are monotonicity homomorphisms. 
(d), (e), (f) and Lemma \ref{lem:convexity3} imply that all Floer trajectories involved in the 
right arrow are contained in $X$.
Hence (c) shows that the right arrow is an isomorphism. 

By Step 2, $\rho(H_{1,\delta\nu})=-\min H$. 
Hence (a), (b) and Lemma \ref{lem:spectral2} (2) show $\rho(G^-) > -\min H - c$. 
It implies that $F_{G^-}$ does not vanish by the top arrow. 
Since the right arrow is isomorphic, 
$F_{G^+}$ does not vanish by the bottom arrow, i.e. $\rho(G^+) \ge -\min H-c$.
Finally, (a), (b) and Lemma \ref{lem:spectral2} (2) show $\rho(H_{1,\nu}) > -\min H - 2c$.
Since $c>0$ is arbitrarily small, 
$\rho(H_{1,\nu}) \ge -\min H$.

\textbf{Step 4:} $\rho(H_{\chi,\nu})=-\min H$.

For any $s \in [0,1]$, define $\chi_s$ by 
$\chi_s:= (1-s) + s\chi$.
Then, it is easy to check that $\Spec(H_{\chi_s,\nu})$ does not depend on $s$. 
Moreover, Lemma \ref{lem:spectral2} (2) shows that $\rho(H_{\chi_s,\nu})$ depends continuously on $s$.
Since $\Spec(H_{\chi_s,\nu})$ is nowhere dence, $\rho(H_{\chi_s,\nu})$ does not depend on $s$. 
Hence $\rho(H_{\chi,\nu}) = \rho(H_{1,\nu}) = -\min H$.
\qed

\section{Proof of Theorem \ref{thm:main}}
\subsection{Key lemma}
First we introduce a few notations. 
For any loop $\gamma: S^1 \to M$, we define 
$\bar{\gamma}:S^1 \to M$ by 
$\bar{\gamma}(t):=\gamma(1-t)$. 
Since $\gamma \sim \gamma' \implies \bar{\gamma} \sim \bar{\gamma'}$, 
one can define $\bar{\alpha}$ for any homotopy class $\alpha$ of free loops. 

\begin{lem}\label{lem:key}
Let $M$ be a closed Riemannian manifold.
Suppose that $\ev: \Lambda_M \to M; \gamma \mapsto \gamma(0)$ has a smooth section $s$. 
Then, setting $\bar{s}: M \to \Lambda_M$ and $c: M \to \Lambda_M$ by 
\[
\bar{s}(p):= \overline{s(p)}, \qquad 
c(p):= \text{constant loop at $p$} \qquad( \forall p \in M),
\]
there holds $s_*[M] \circ \bar{s}_*[M] = c_*[M]$, where $\circ$ denotes the loop product on $H_*(\Lambda_M)$.
\end{lem}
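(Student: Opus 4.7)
The plan is to unpack the loop product applied to $s_*[M] \otimes \bar{s}_*[M]$ and reduce everything to an explicit null-homotopy of $\gamma \cdot \bar{\gamma}$ for $\gamma = s(p)$. Since $M$ is compact and $s, \bar{s}$ are smooth, I can choose $a > 0$ large enough that both $s$ and $\bar{s}$ factor through $\Lambda^{<a}_M$. By the definition of the loop product, $s_*[M] \circ \bar{s}_*[M] = \Gamma_* \, e_! \bigl( s_*[M] \times \bar{s}_*[M] \bigr)$, and the cross product equals $f_*[M \times M]$ where $f := s \times \bar{s} : M \times M \to \Lambda^{<a}_M \times \Lambda^{<a}_M$.

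The key observation is that $\ev \circ \bar{s}(p) = \bar{s}(p)(0) = s(p)(1) = s(p)(0) = p$, so $(\ev \times \ev) \circ f = \id_{M \times M}$. Because $\ev \times \ev$ is a submersion and $\Theta^{a,a}_M = (\ev \times \ev)^{-1}(\Delta_M)$, this single identity forces both $f \pitchfork \Theta^{a,a}_M$ (since $\id_{M \times M} \pitchfork \Delta_M$) and $f^{-1}(\Theta^{a,a}_M) = \Delta_M$. Lemma \ref{lem:umkehr} therefore yields $e_! \, f_*[M \times M] = f_*[\Delta_M] = (s, \bar{s})_*[M] \in H_n(\Theta^{a,a}_M)$, where $(s, \bar{s}) : M \to \Theta^{a,a}_M$ denotes $p \mapsto (s(p), \bar{s}(p))$.

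It then remains to show that $\varphi := \Gamma \circ (s, \bar{s}) : M \to \Lambda^{<2a}_M$ is homotopic to the constant-loop map $c$. Geometrically, $\varphi(p) = s(p) \cdot \bar{s}(p)$ traces $s(p)$ forward on $[0, 1/2]$ and backward on $[1/2, 1]$, which canonically contracts to $c(p)$ by shrinking both halves simultaneously. Explicitly, I would define
\[
h(p, u)(t) := \begin{cases} s(p)\bigl(2t(1-u)\bigr) & 0 \le t \le 1/2, \\ s(p)\bigl(2(1-t)(1-u)\bigr) & 1/2 \le t \le 1, \end{cases}
\]
which at $u = 0$ recovers $\Gamma(s(p), \bar{s}(p))$, at $u = 1$ gives the constant loop at $p$, and for every $u$ is a loop based at $p$; continuity of $h$ into $\Lambda_M$ is routine. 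The main obstacle in this plan is purely notational: both the transversality statement and the identification $f^{-1}(\Theta^{a,a}_M) = \Delta_M$ are forced by the identity $(\ev \times \ev) \circ f = \id$, and the homotopy $h$ is the classical contraction of $\gamma \cdot \bar{\gamma}$ to a point, so Lemma \ref{lem:umkehr} does essentially all the work.
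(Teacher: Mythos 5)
Your proof is correct and follows essentially the same route as the paper: express the loop product via the cross product, Umkehr map, and concatenation, use $(\ev \times \ev)\circ(s\times\bar s)=\id$ together with Lemma \ref{lem:umkehr} to identify $s_*[M]\circ\bar s_*[M]$ with $\Gamma(s,\bar s)_*[M]$, then contract $\gamma\cdot\bar\gamma$ to the constant loop. The only cosmetic differences are that you work in $\Lambda^{<a}_M$ for large finite $a$ rather than directly in $\Lambda_M$, and your homotopy parameter runs in the opposite direction from the paper's.
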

\begin{proof}
Recall that the loop product is the compsition of the following three homomorphisms
(we use same notations as in section 3.1):
\[
\xymatrix{
H_i(\Lambda_M) \otimes H_j(\Lambda_M) \ar[r]^{\times} & H_{i+j}(\Lambda_M \times \Lambda_M) \ar[r]^{e_!} & H_{i+j-n}(\Theta_M) \ar[r]^{\Gamma_*}&H_{i+j-n}(\Lambda_M).
}
\]
It is clear that $s_*[M] \times \bar{s}_*[M] = (s \times \bar{s})_*[M \times M]$.
Moreover, since $(\ev \times \ev) \circ (s \times \bar{s}) = \id_{M \times M}$ and 
$\Theta_M=(\ev \times \ev)^{-1}(\Delta_M)$, 
$s \times \bar{s}: M \times M \to \Lambda_M \times \Lambda_M$ is transversal to $\Theta_M$.
Then Lemma \ref{lem:umkehr} shows that 
$s_*[M] \circ \bar{s}_* [M] = \Gamma(s,\bar{s})_*[M]$.

Hence it is enough to show that two continuous maps
$\Gamma(s,\bar{s}), c: M \to \Lambda_M$ are homotopic.
Define 
$K: M \times [0,1] \to \Lambda_M$ by 
\[
K(p,t)(\tau):= \begin{cases}
                     s(p)(2t\tau) &( 0 \le \tau \le 1/2) \\
                     s(p)(2t-2t\tau) &(1/2 \le \tau \le 1)
                     \end{cases}.
\]
Then $K$ is a homotopy between $\Gamma(s,\bar{s})$ and $c$. 
\end{proof}

Lemma \ref{lem:key}, Theorem \ref{thm:loop} and Corollary \ref{cor:loop} imply the following:

\begin{cor}\label{cor:key}
Let $M$ be a closed, oriented Riemannian manifold, 
$\alpha$ be a homotopy class of free loops on $M$. 
Suppose that the evaluation map $\Lambda^{\alpha}_M \to M; \gamma \mapsto \gamma(0)$ has a smooth section. 
Then, there exist $x \in \HF_n^{\alpha}(DT^*M)$, $y \in \HF_n^{\bar{\alpha}}(DT^*M)$ such that 
$x*y = F_\infty \in \HF_n(DT^*M)$.
\end{cor}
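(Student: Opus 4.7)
The plan is to transport the loop-product identity of Lemma \ref{lem:key} through the isomorphism of Theorem \ref{thm:loop}, using Corollary \ref{cor:loop} to recognize $F_\infty$ on the Floer side. Since the hypothesis gives a smooth section $s\colon M\to \Lambda_M^\alpha$ of the evaluation map, the cycle $s_*[M]$ naturally lives in $H_n(\Lambda_M^\alpha)$. The reversal map $\bar{s}\colon M\to \Lambda_M$, $p\mapsto \overline{s(p)}$, factors through $\Lambda_M^{\bar\alpha}$, so $\bar{s}_*[M]\in H_n(\Lambda_M^{\bar\alpha})$.

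First I would define $x\in \HF_n^{\alpha}(DT^*M)$ and $y\in \HF_n^{\bar\alpha}(DT^*M)$ to be the images of $s_*[M]$ and $\bar{s}_*[M]$, respectively, under the isomorphisms
\[
\HF_n^{\alpha}(DT^*M)\cong H_n(\Lambda_M^\alpha),\qquad \HF_n^{\bar\alpha}(DT^*M)\cong H_n(\Lambda_M^{\bar\alpha}),
\]
provided by Theorem \ref{thm:loop} (1) (with $a=\infty$). By Theorem \ref{thm:loop} (3), the pair-of-pants product on $\HF_*(DT^*M)$ corresponds to the loop product on $H_*(\Lambda_M)$, so $x*y\in \HF_n(DT^*M)$ corresponds to $s_*[M]\circ \bar{s}_*[M]\in H_n(\Lambda_M)$.

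Next I would invoke Lemma \ref{lem:key}, whose proof goes through verbatim in our setting since $s$ takes values in $\Lambda_M^\alpha\subset \Lambda_M$: it yields
\[
s_*[M]\circ \bar{s}_*[M]=c_*[M]\in H_n(\Lambda_M),
\]
where $c\colon M\to \Lambda_M$ sends a point to the constant loop at it. Finally, Corollary \ref{cor:loop} (again with $a=\infty$) identifies $c_*[M]$ with $F_\infty$ under the isomorphism $\HF_n(DT^*M)\cong H_n(\Lambda_M)$, hence $x*y=F_\infty$, as desired.

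No step is genuinely difficult, since all the hard work has been done in Theorem \ref{thm:loop} and Lemma \ref{lem:key}; the only point to be careful about is bookkeeping of homotopy classes, namely that $s_*[M]$ and $\bar{s}_*[M]$ land in the $\alpha$ and $\bar\alpha$ components respectively, while the product $c_*[M]$ is supported on constant (hence trivial) loops and therefore matches $F_\infty\in \HF_n(DT^*M)=\bigoplus_\beta \HF_n^\beta(DT^*M)$ in the trivial component, consistent with $\alpha\cdot\bar\alpha$ being the trivial class of free loops.
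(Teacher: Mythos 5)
Your proposal is correct and is essentially the same argument the paper intends: the paper states that Corollary~\ref{cor:key} follows from Lemma~\ref{lem:key}, Theorem~\ref{thm:loop} and Corollary~\ref{cor:loop}, and you have simply spelled out that deduction, with the correct observation that a section of $\ev\colon\Lambda_M^\alpha\to M$ gives, after composing with the inclusion $\Lambda_M^\alpha\hookrightarrow\Lambda_M$, a section of $\ev\colon\Lambda_M\to M$ as required by Lemma~\ref{lem:key}.
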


\subsection{Proof of Theorem \ref{thm:main}}
Finally we prove our main result, Theorem \ref{thm:main}.
Since $\HF_*(DT^*M)= \varinjlim_{a \to \infty} \HF_*^{<a}(DT^*M)$, 
Corollary \ref{cor:key} shows that, for sufficiently large $0<a<\infty$, there exist
$x \in \HF_n^{<a,\alpha}(DT^*M)$, 
$y \in \HF_n^{<a,\bar{\alpha}}(DT^*M)$ such that 
$x*y = F_{2a} \in \HF_n^{<2a}(DT^*M)$. 
Corollary \ref{cor:loop2} shows that $F_{2a} \ne 0$, hence $x,y \ne 0$. 

We show the following claim: 
\begin{claim}\label{claim:main}
For any $H \in C_0^\infty(\interior DT^*M)$ such that $\min H <-2a$, 
there exists a nonconstant periodic orbit $\gamma$ of $X_H$ such that $\Per(\gamma) \le 1$.
\end{claim}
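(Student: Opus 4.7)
The plan is to argue by contraposition using Proposition \ref{prop:criterion}. Assume every nonconstant $1$-periodic orbit of $X_H$ has period strictly greater than $1$; I will derive $\rho(H_{\chi,\nu}) \le 2a$ for a suitable pair $(\chi,\nu)$, contradicting the equality $\rho(H_{\chi,\nu}) = -\min H > 2a$ forced by the contrapositive of Proposition \ref{prop:criterion}.

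First I set up the proposition. Since $\Spec(DT^*M,\lambda_M)$ is nowhere dense and $-\min H > 2a$, pick $a_\nu \in (2a, -\min H) \setminus \Spec(DT^*M,\lambda_M)$. I construct $\nu \in C^\infty([1,\infty))$ with $\nu \equiv 0$ on $[1, r_0]$ for some $r_0 > 1$ close to $1$, $\nu'(r) \equiv a_\nu$ on $[r_1,\infty)$, and convex in between, so that $S(\nu)$ is close to $a_\nu r_0$ and in particular $S(\nu) < -\min H$. I take $\chi \in C^\infty(S^1)$ nonnegative, supported in $[1/8, 3/8]$, with all derivatives vanishing at $0$ and $1/2$, and $\int_{S^1}\chi = 1$. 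Hypotheses (1) and (2) of Proposition \ref{prop:criterion} then hold, and hypothesis (3) is Corollary \ref{cor:loop2}. The standing contrapositive assumption therefore forces $\rho(H_{\chi,\nu}) = -\min H$.

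The opposite inequality comes from the product structure. The natural map $\HF_n^{<2a}(DT^*M) \to \HF_n^{<a_\nu}(DT^*M)$ carries $F_{2a}$ to $F_{a_\nu}$, and $F_{2a} = x*y$ with $x, y$ as supplied just before the claim (via Corollary \ref{cor:key}). Define $H_1, H_2 \in C^\infty(S^1 \times T^*M)$ piecewise by
\[
H_1(s, x) = \tfrac{1}{2}\chi(s/2)H(x), \quad H_1(s, z, r) = \tfrac{1}{2}\nu(r), \quad H_2|_{DT^*M} \equiv 0, \quad H_2(s,z,r) = \tfrac{1}{2}\nu(r),
\]
(for $x \in DT^*M$ and $(z,r) \in ST^*M \times [1,\infty)$), so that both are linear at infinity with slope $a_\nu/2$. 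The support condition on $\chi$ gives $H_1 * H_2 = H_{\chi,\nu}$, while flatness of $\chi$ at $0$ and $1/2$ guarantees both (P0) and smoothness across $S^1$. Since $2 a_{H_1} = a_\nu \notin \Spec(DT^*M,\lambda_M)$, Lemma \ref{lem:spectral2}(3) yields
\[
\rho(H_{\chi,\nu}) \le \rho(H_1 : x) + \rho(H_2 : y).
\]

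The main obstacle is to bound $\rho(H_1 : x) \le a$ and $\rho(H_2 : y) \le a$. Since $\alpha$ is nontrivial, constant orbits at critical points of $H$ do not contribute to $\CF^\alpha(H_1)$; and by the contrapositive assumption no nonconstant period-$1$ interior orbit exists either, so every generator of $\CF^\alpha(H_1)$ is a Reeb-like orbit on the cone $ST^*M \times (1,\infty)$, corresponding (via Reeb flow $=$ geodesic flow) to a closed geodesic in class $\alpha$ on $M$. By arranging $\nu$ to rise sharply only in a small interval near $r_1$, the action $\tfrac{1}{2}(r\nu'(r)-\nu(r))$ of such a cone orbit representing a geodesic of length $L$ is close to $rL \approx L$, and hence $< a$ whenever $L < a$. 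Invoking the filtered Abbondandolo--Schwarz isomorphism of Theorem \ref{thm:loop}, which matches the length filtration on $H_*(\Lambda^\alpha_M)$ with the action filtration on $\HF^\alpha(DT^*M)$, the class $x \in \HF^{<a,\alpha}$ is then represented in $\HF(H_1)$ by cone orbits of action less than $a$, giving $\rho(H_1:x) \le a$. The same argument with $\bar\alpha$ in place of $\alpha$ handles $\rho(H_2:y) \le a$. The resulting bound $\rho(H_{\chi,\nu}) \le 2a < -\min H$ contradicts the second paragraph and establishes the claim.
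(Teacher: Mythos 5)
Your overall framing mirrors the paper's: split a Hamiltonian of the form $H_{\chi',\nu'}$ as a pair-of-pants product, use Lemma \ref{lem:spectral2}(3) to bound its spectral invariant by the sum of two spectral invariants, pin each summand strictly below $-\tfrac{1}{2}\min H$, and feed the result into Proposition \ref{prop:criterion}. Your choices of $\nu$, of $\chi$ vanishing near $0=1$, and of Corollary \ref{cor:loop2} to supply hypothesis (3) all match the paper up to inessential normalization: the paper writes $H_{\chi,\nu}*H_{0,\nu}=H_{\eta,2\nu}$ with $\nu$ of slope $a$, whereas you write $H_1*H_2 = H_{\chi,\nu}$ with $\nu$ of slope $a_\nu>2a$. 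The place where your argument breaks down is the bound on the two summands.

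You assert $\rho(H_1:x)\le a$ by ``invoking the filtered Abbondandolo--Schwarz isomorphism of Theorem \ref{thm:loop}, \ldots the class $x\in\HF^{<a,\alpha}$ is then represented in $\HF(H_1)$ by cone orbits of action less than $a$.'' This step does not follow from Theorem \ref{thm:loop}. That theorem identifies $\HF^{<a,\alpha}(DT^*M)$, which is a direct limit over Hamiltonians, with $H_*(\Lambda^{<a,\alpha}_M)$; it says nothing about how $\Psi_{H_1}^{-1}(x)$ sits inside the action-filtered complex $\CF^{<\cdot}_*(H_1)$ of the specific Hamiltonian $H_1$. The spectral invariant $\rho(H_1:x)$ is governed by the filtration induced by $\mca{A}_{H_1}$ on $\CF_*(H_1)$, and translating a length filtration bound on loop-space homology into a bound on $\mca{A}_{H_1}$ requires a compatibility statement that Theorem \ref{thm:loop} does not supply. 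The paper is explicit that a quantitative refinement of this kind is left open, as a Conjecture in Section~7; it deliberately avoids any such filtered transfer in the proof of this Claim.

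The good news is that you do not need the sharp bound $\rho(H_1:x)\le a$, and you already assembled everything needed for the correct, weaker bound. Since $F_{a_\nu}=x*y\ne 0$ by Corollary \ref{cor:loop2}, the images of $x$ and $y$ in $\HF_*^{<a_\nu/2}(DT^*M)$ are nonzero, so Lemma \ref{lem:spectral2}(1) gives $\rho(H_1:x)\in\Spec^\alpha(H_1)$ and $\rho(H_2:y)\in\Spec^{\bar\alpha}(H_2)$. You correctly observed that, under the contrapositive hypothesis and the nontriviality of $\alpha$, $\bar\alpha$, the only orbits contributing to these spectra are cone orbits with action $\tfrac{1}{2}\bigl(r\nu'(r)-\nu(r)\bigr)\le \tfrac{1}{2}S(\nu)<-\tfrac{1}{2}\min H$. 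This gives $\rho(H_{\chi,\nu})\le\rho(H_1:x)+\rho(H_2:y)<-\min H$, contradicting the equality $\rho(H_{\chi,\nu})=-\min H$ that the contrapositive of Proposition \ref{prop:criterion} forces. This is precisely the mechanism the paper uses; your invocation of a filtered transfer is both unjustified and unnecessary.
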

Claim \ref{claim:main} immediately implies that $c_{\HZ}(DT^*M) \le 2a$. In particular, $c_{\HZ}(DT^*M)$ is finite. 
Suppose that Claim \ref{claim:main} does not hold: 
i.e. there exists $H \in C_0^\infty(\interior\,DT^*M)$ such that 
$\min H < -2a$ and any nonconstant periodic orbit $\gamma$ of $X_H$ satisfies $\Per(\gamma)>1$. 

Since $\Spec (DT^*M, \lambda_M)$ is nowhere dence, we may assume that $a, 2a \notin \Spec(DT^*M, \lambda_M)$. 
Take $\nu \in C^\infty\bigl([1,\infty)\bigr)$ such that:
\begin{enumerate}
\item[(a)] There exists $1<r_0$ such that $\nu(r) \equiv 0$ on $[1,r_0]$.
\item[(b)] There exists $1<r_1$ such that
such that $\nu'(r) \equiv a$ on $[r_1,\infty)$.
\item[(c)] $S(\nu):=\sup_{r \ge 1}\,r\nu'(r)-\nu(r)< - \min H/2$.
\end{enumerate}
Moreover, take $\chi \in C^\infty(S^1)$ such that $\int_{S^1} \chi(t) dt=1$ and $\chi \equiv 0$ near $0=1$.

Consider $H_{\chi,\nu}, H_{0,\nu} \in C^\infty(S^1 \times T^*M)$. Obviously, 
$H_{\chi,\nu} * H_{0,\nu}= H_{\eta,2\nu}$, where $\eta \in C^\infty(S^1)$ is defined as 
\[
\eta(t) = \begin{cases}
          2\chi(2t) &( 0 \le t \le 1/2) \\
           0       &( 1/2 \le t \le 1)
          \end{cases}.
\]
By the same arguments as in Step 4 in the proof of Proposition \ref{prop:criterion}, 
$\rho(H_{\chi,\nu}:x) = \rho(H_{1,\nu}:x)$. 
If there exists $\gamma \in \mca{P}(H_{1,\nu})$ with $[\gamma]=\alpha$ and $\gamma(S^1) \subset DT^*M$,  
$\gamma$ is a nonconstant (since $\alpha$ is nontrivial) periodic orbit of $X_H$, and obviously $\Per(\gamma)=1$. 
It contradicts our assumption. 
Hence any $\gamma \in \mca{P}(H_{1,\nu})$ with $[\gamma]=\alpha$ is not contained in $DT^*M$, and 
(c) shows that 
$\Spec^\alpha(H) \subset (-\infty,-\min H/2)$.
Hence $\rho(H_{\chi,\nu}:x)= \rho(H_{1,\nu}:x) < - \min H/2$.

On the otherhand, (c) shows that $\Spec (H_{0,\nu}) \subset (-\infty, -\min H/2)$.
Hence $\rho(H_{0,\nu}:y)<- \min H/2$.
Therefore we conclude
\[
\rho(H_{\eta,2\nu})= \rho(H_{\eta,2\nu}:F_{2a}) \le \rho(H_{\chi,\nu}:x) + \rho(H_{0,\nu}:y) < -\min H.
\]
Since $2a, S(2\nu) < -\min H$, 
$\int_{S^1} \eta(t) \,dt=1$ and $F_{2a} \ne 0$, 
Proposition \ref{prop:criterion} shows that there exists a nonconstant periodic orbit of $X_H$ with 
period $\le 1$. This contradicts our assumption. 
\qed

\section{Proof of Theorem \ref{thm:loop}}

The aim of this section is to prove Theorem \ref{thm:loop}.
Although it is essentially established in \cite{AS1}, \cite{AS2}, 
we have to overcome the following technical matters to deduce it from results in \cite{AS1}, \cite{AS2}:
\begin{itemize}
\item In section 2, we define truncated Floer homology of Liouville domains, by using Hamiltonians which grow linearly at ends. 
On the otherhand, in \cite{AS1} and \cite{AS2}, the authors study Floer homology of Hamiltonians on cotangent bundles which grow quadratically at ends. 
Hence we have to understand how to define truncated Floer homology of unit disk cotangent bundles,
 by using Hamiltonians on cotangent bundles which grow quadratically at ends. 
\item In \cite{AS1}, the main result is stated as 
\[
\HF^{<a}_*(H) \cong H_*\bigl(\{\mca{A}_L<a\}\bigr),
\]
where $H$ is a time-dependent Hamiltonian on a cotangent bundle $T^*M$, 
$L$ is its Fenchel dual, and 
$\mca{A}_L$ is a functional on $\Lambda_M$ which is defined as 
\[
\mca{A}_L(\gamma):=\int_{S^1} L(t,\gamma(t),\gamma'(t))\, dt.
\]
On the otherhand, Theorem \ref{thm:loop} deals with truncated Floer homology of unit disk cotangent bundles,
which is defined by taking a limit of Hamiltonians. 
Hence to prove Theorem \ref{thm:loop}, we have to choose an appropriate sequence of Hamiltonians, and take a limit of the above isomorphism.
\end{itemize}
Section 6.1 concerns the first matter, and  
the goal of this subsection is to define truncated Floer homology of unit disk cotangent bundles 
by using Hamiltonians on cotangent bundles  which grow quadratically at ends (Proposition \ref{prop:HFinAS4}).
In section 6.2, we state main results in \cite{AS1}, \cite{AS2} in a rigorous manner (Theorem \ref{thm:AS}, Theorem \ref{thm:AS3}).
In section 6.3, we prove Theorem \ref{thm:loop}, 
by choosing an appropriate sequence of Hamiltonians (Lemma \ref{lem:sequence}) and 
taking a direct limit.

\subsection{Floer homology of cotangent bundles}
Let $M$ be a compact Riemannian manifold.
For each $(q,p) \in T^*M$, 
$T_{(q,p)}^{\hor}(T^*M)$ denotes the horizontal subspace of $T_{(q,p)}(T^*M)$ with respect to the Levi-Civita connection, and 
$T_{(q,p)}^{\ver}(T^*M):=T_{(q,p)}(T_q^*M)$. 
$T(T^*M)$ naturally splits as $T^\hor(T^*M) \oplus T^\ver(T^*M)$.
Then, natural isomorphisms of vector bundles 
\[
\xymatrix{
T^\ver(T^*M)\ar[r]^{\cong}&\pi_M^*(T^*M)\ar[r]^{\cong}&\pi_M^*(TM)\ar[r]^{\cong}&T^\hor(T^*M)
}
\]
define an almost complex structure on $T(T^*M)$ (the second arrow is defined by the Riemannian metric on $M$).
We denote it as $J_M$. 
It is easy to check that $J_M$ is compatible with $\omega_M$. 
Hence it defines a Riemannian metric $g_{J_M}$ on $T^*M$. 
Simple computations show the following lemma: 

\begin{lem}
Identifying $(T^*M,\lambda_M)$ with the completion of $(DT^*M,\lambda_M)$ (see example \ref{ex:TM}), 
$J_M$ is of contact type on $T^*M \setminus M = ST^*M \times (0,\infty)$.
\end{lem}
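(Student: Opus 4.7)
The plan is to verify $dr \circ J_M = -\lambda$ on $ST^*M \times (0, \infty)$ by a direct computation in the horizontal-vertical splitting of $T(T^*M)$. First I would unpack $J_M$: the composite isomorphism $T^{\ver}(T^*M) \to T^{\hor}(T^*M)$ sends a vertical vector $w$, viewed as an element of $T_q^*M$, to the horizontal lift of its metric dual $w^\sharp \in T_qM$. The identity $J_M^2 = -\id$ then forces $J_M$ on the horizontal summand to send the horizontal lift of $v \in T_qM$ to minus the vertical vector $v^\flat$. I would verify in canonical coordinates at the zero section that this sign convention is the one making $g_{J_M}$ positive definite, so that $J_M$ indeed lies in $\mca{J}(T^*M, \hat{\lambda_M})$.

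Next I would compute $dr$ and $\lambda_M$ on the horizontal and vertical summands. Since parallel transport of $p$ preserves $|p|$, the function $r = |p|$ is constant along horizontal curves, so $dr$ vanishes on $T^{\hor}$; on $T^{\ver}$, differentiating $|p|$ along the fibre yields $dr(w) = g^*(p, w)/|p|$ for $w \in T_q^*M$. Dually, the defining formula $\lambda_M(\xi) = p(d\pi_M(\xi))$ immediately forces $\lambda_M$ to vanish on verticals and to equal $p(v)$ on the horizontal lift of $v$. Assembling these, for a general $\xi = \xi^{\hor} + \xi^{\ver}$ with $\xi^{\hor}$ the horizontal lift of $v$ and $\xi^{\ver} = w$, only $J_M\xi^{\hor} = -(v^\flat)^{\ver}$ contributes to $dr(J_M\xi)$, which gives
\[
dr(J_M \xi) = -g^*(p, v^\flat)/|p| = -p(v)/|p| = -\lambda_M(\xi)/|p|.
\]

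To translate this identity into the paper's contact type condition, I would invoke Example \ref{ex:TM} and the formula $I^*\lambda = r\lambda$ recalled in section 2.1.1: under the identification $\widehat{DT^*M} \cong T^*M$, the coordinate $r = |p|$ on $T^*M \setminus M$ matches the Liouville-flow coordinate on $\partial X \times (0, \infty)$, and $\lambda_M$ pulls back to $r\lambda$, where $\lambda$ denotes the contact form on $\partial X = ST^*M$. Thus $\lambda_M/|p|$ is exactly the pullback of $\lambda$ to the cylindrical end, and the displayed identity becomes $dr \circ J_M = -\lambda$ in the sense of the paper's definition. The entire argument is a routine local computation; the only mildly delicate point is keeping the two models of the cylindrical end consistent (radial coordinate $|p|$ on $T^*M \setminus M$ versus the Liouville coordinate on $\partial X \times (0, \infty)$) and fixing the sign of $J_M$ so that $g_{J_M}$ is Riemannian rather than negative definite.
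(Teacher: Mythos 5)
Your computation is correct, and it fills in precisely what the paper leaves to the reader: the paper states this lemma with the remark ``simple computations show'' and gives no written proof, and your direct verification in the horizontal--vertical splitting is the intended computation. The key points — the sign of $J_M$ on the horizontal summand, $dr$ vanishing on horizontals with $dr(w^{\ver}) = g^*(p,w)/|p|$, $\lambda_M$ vanishing on verticals with $\lambda_M(v^{\hor}) = p(v)$, and the identification $r = |p|$ with $\lambda_M = r\lambda$ on the cylindrical end — are all handled correctly.
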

%\begin{proof}
%Recall that diffeomorphism $I: ST^*M \times (0,\infty) \to T^*M \setminus M$ is defined so that $I(z,1)=z$, and $\partial_rI(z,r)=r^{-1}Z_M(I(z,r))$.
%Since $Z_M(q,p)=p\partial_p$, $r(q,p)=|p|$. 
%
%Let $(q_1,\ldots,q_n)$ be a local chart on $M$ such that 
%$\langle \partial_{q_i}, \partial_{q_j} \rangle = \delta_{ij}$ and $\nabla_{\partial_{q_i}}\partial_{q_j}=0$ at $(0,\ldots,0)$.
%Let $(p_1,\ldots,p_n)$ be a chart on fibers associated with $(q_1,\ldots,q_n)$. 
%
%Then, $dr= r^{-1} \sum_j p_j dp_j$ on $T_{(0,\ldots,0)}^*M$.
%On the other hand, 
%\[
%\xymatrix{
%\pi_{ST^*M}: T^*M \setminus M \ar[r]^-{\cong}& ST^*M \times (0,\infty) \ar[r]^-{\text{proj}} & ST^*M
%}
%\]
%is computed as $\pi_{ST^*M}(q,p)=\bigl(q, r(q,p)^{-1}p \bigr)$. 
%Hence $\pi_{ST^*M}^*\lambda_M(q,p)=r(q,p)^{-1} \sum_j p_j dq_j$. 
%Since $J_M(\partial_{q_j})=-\partial_{p_j}$, we get $dr \circ J_M =- r^{-1} \sum_j p_j dq_j= -\pi_{ST^*M}^*\lambda_M$
%on $T_{(0,\ldots,0)}^*M$. 
%\end{proof}

Let $H \in C^\infty(S^1 \times T^*M)$.
For any $t \in S^1$, $H_t \in C^\infty(T^*M)$ is defined as $H_t(q,p):=H(t,q,p)$.
In \cite{AS1}, the authors introduce the following conditions on $H$:
\begin{enumerate}
\item [(H0):] All orbits in $\mca{P}(H)$ are nondegenerate.
\item [(H1):] $dH_t(q,p)(Z_M) - H_t(q,p) \ge h_0|p|^2-h_1$ for some constants $h_0>0$ and $h_1 \ge 0$.
\item [(H2):] $|\nabla_q H_t(q,p)| \le h_2(1+|p|^2)$, $|\nabla_pH_t(q,p)| \le h_2(1+|p|)$ for some constant $h_2 \ge 0$.
\end{enumerate}

In (H1), $Z_M$ denotes the Liouville vector field of $(T^*M,\lambda_M)$, as we have introduced at the beginning of this paper. 
In (H2), $\nabla_qH_t$, $\nabla_pH_t$ denote horizontal and vertical components of the gradient of $H_t$ with respect to $g_{J_M}$. 

For any $-\infty \le a \le \infty$ and $H \in C^\infty(S^1 \times T^*M)$ satisfying (H0), (H1), (H2), 
one can define the Floer chain complex $(\CF^{<a}_*(H), \partial_{J_M,H})$ in the usual manner, 
and its homology group is denoted as $\HF^{<a}_*(H)$ (see section 3.1 in \cite{AS2}).
The crusial step is to prove a $C^0$-estimate for Floer trajectories, and it is carried out in section 1.5 in \cite{AS1}
(see also section 6.1 in \cite{AS2}).
\begin{rem}
In \cite{AS2}, the authors fix the almost complex structure $J_M$, and achieve transversalities by perturbing Hamiltonians 
(see Remark 3.3 in \cite{AS2}).
\end{rem}
%\begin{rem}
%In \cite{AS1}, the following fact is also proved (Theorem 1.20 in \cite{AS1}):
%suppose that $H, K \in C^\infty(S^1 \times T^*M)$ satisfy (H0), (H1) and (H2).
%then, there exists a natural isomorphism $\HF_*(H) \to \HF_*(K)$.
%However, we don't use this fact in this paper. 
%\end{rem}

We check the existence of the monotonicity homomorphism in this case: 

\begin{prop}\label{prop:HFinAS3}
Suppose that $H, K \in C^\infty(S^1 \times T^*M)$ satisfy (H0), (H1), (H2), and 
$H_t(q,p) \le  K_t(q,p)$ for any $t \in S^1$ and $(q,p) \in T^*M$. 
Then, there exists a natural homomorphism $\HF^{<a}_*(H) \to \HF^{<a}_*(K)$ for any $-\infty \le a \le \infty$. 
\end{prop}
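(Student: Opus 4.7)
The plan is to imitate the standard construction of the monotonicity homomorphism, the only non-trivial point being the $C^0$-estimate for Floer trajectories, which we will import from \cite{AS1}.

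First, I would choose a smooth family $(H_{s,t})_{(s,t)\in\R\times S^1}$ of Hamiltonians on $T^*M$ interpolating between $H$ and $K$, in the sense that $H_{s,t}=H_t$ for $s\le -s_0$, $H_{s,t}=K_t$ for $s\ge s_0$, and $\partial_s H_{s,t}(q,p)\ge 0$ for all $(s,t,q,p)$; such an interpolation exists since $H_t\le K_t$ pointwise. By arranging the interpolation as a convex combination $H_{s,t}=(1-\chi(s))H_t+\chi(s)K_t$ with a monotone cutoff $\chi$, the family satisfies (H1) and (H2) uniformly in $s$ (with possibly worse constants $h_0,h_1,h_2$ than for $H$ and $K$ individually). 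For a generic such choice together with the almost complex structure $J_M$ perturbed as in \cite{AS2} (cf.\ the remark preceding the statement), the parametrized Floer moduli spaces
\[
\mca{M}(x,y):=\bigl\{u:\R\times S^1\to T^*M\bigm{|}\partial_s u-J_M(\partial_t u-X_{H_{s,t}}(u))=0,\ u(-\infty)=x,\ u(+\infty)=y\bigr\}
\]
for $x\in\mca{P}(H)$, $y\in\mca{P}(K)$ are smooth manifolds of the expected dimension $\ind_\CZ(x)-\ind_\CZ(y)$.

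Next, the key analytic input is the $C^0$-estimate: any $u\in\mca{M}(x,y)$ has image contained in a compact subset of $T^*M$, with a bound depending only on the constants $h_0,h_1,h_2$ of the interpolating family and on $\max(\|x\|_{C^0},\|y\|_{C^0})$. This is the direct $s$-dependent analogue of the $C^0$-estimate in section 1.5 of \cite{AS1}, which is based on the Aleksandrov maximum principle applied to the subharmonic function $|p(u(s,t))|^2$; the argument goes through because $\partial_s H_{s,t}\ge 0$ and (H1), (H2) hold uniformly in $s$. Granted this estimate, the usual compactness, transversality, and gluing arguments show that $0$-dimensional components of $\mca{M}(x,y)$ are finite, and that counting them modulo $2$ defines a chain map $\Phi:\CF_*(H)\to\CF_*(K)$.

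The critical step for truncation preservation is the action identity: a standard computation using $\partial_s u=J_M(\partial_t u-X_{H_{s,t}}(u))$ and the compatibility of $J_M$ with $\omega_M$ yields
\[
\mca{A}_H(x)-\mca{A}_K(y)=\int_{\R\times S^1}|\partial_s u|_{J_M}^2\,dsdt+\int_{\R\times S^1}\partial_s H_{s,t}(u)\,dsdt\ge 0
\]
for every $u\in\mca{M}(x,y)$, thanks to $\partial_s H_{s,t}\ge 0$. Hence $\mca{M}(x,y)\ne\emptyset$ implies $\mca{A}_H(x)\ge\mca{A}_K(y)$, so $\Phi$ restricts to a chain map $\CF^{<a}_*(H)\to\CF^{<a}_*(K)$ for every $-\infty\le a\le\infty$. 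Passing to homology gives the desired map.

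Finally, naturality and independence of the choice of interpolation are verified by the standard homotopy-of-homotopies argument: given two monotone interpolations, a two-parameter family connecting them produces a chain homotopy, again after establishing the corresponding $C^0$-estimate uniformly in the second parameter. The hardest step is precisely this uniform $C^0$-estimate for solutions of the parametrized Floer equation; all the other ingredients are formal adaptations of the constructions already performed in \cite{AS1}, \cite{AS2} and in section 2.2 of the present paper.
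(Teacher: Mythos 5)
Your proposal follows essentially the same route as the paper: a monotone convex interpolation $H_{s,t}=(1-\chi(s))H_t+\chi(s)K_t$, the action identity giving $\int|\partial_s u|^2\,dsdt\le\mca{A}_H(x)-\mca{A}_K(y)$ and $\mca{A}_{H_s}(u(s))\in[\mca{A}_K(y),\mca{A}_H(x)]$, and an appeal to the $C^0$-estimates of \cite{AS1} (the paper cites Theorem 1.14(i) there, observing that although it is stated for $s$-independent Hamiltonians its proof only uses these two bounds). Your proof is correct and in substance identical, with a bit more said about naturality and homotopy invariance.
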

\begin{proof}
We take $\chi \in C^\infty(\R)$ so that $\chi'(s) \ge 0$ for any $s \in \R$, 
$\chi(s)=0$ for $s \le -1$, $\chi(s)=1$ for $s \ge 1$. 
Set $H_{s,t}:=\chi(s)K_t + (1-\chi(s))H_t$.
Then we define a chain map 
$\varphi:\CF^{<a}_*(H) \to \CF^{<a}_*(K)$ by 
\[
\varphi [x]:=\sum_y \varphi_{x,y}  \cdot [y] 
\]
where $\varphi_{x,y}$ is the (mod 2) number of 
$u: \R \times S^1 \to T^*M$ which satisfies the Floer equation 
\[
\partial_s u - J_M(\partial_t u - X_{H_{s,t}}(u))=0, \qquad
\lim_{s \to -\infty} u(s)=x, \qquad
\lim_{s \to \infty} u(s)=y.
\]
The only delicate point is a $C^0$ estimate for solutions of the above Floer equation, where we cannot apply 
Lemma \ref{lem:convexity}, since $H_s$ are not linear at $\infty$. 
The key fact is that any solution $u:\R \times S^1 \to T^*M$ of the above Floer equation satisfies 
\[
\partial_s\bigl(\mca{A}_{H_s}\bigl(u(s)\bigr)\bigr) = -\int_{S^1} \big\lvert \partial_su(s,t) \big\rvert_{J_M}^2 + \partial_sH_{s,t}\bigl(u(s,t)\bigr)\, dt.
\]
Since $\partial_s H_{s,t}(q,p) \ge 0$ for any $(s,t) \in \R \times S^1$ and $(q,p) \in T^*M$, we get 
\begin{align*}
\int_{\R \times S^1} |\partial_s u(s,t)|_{J_M}^2 \, dsdt &\le \mca{A}_H(x) - \mca{A}_K(y), \\
\mca{A}_{H_s}\bigl(u(s)\bigr) &\in \bigl[ \mca{A}_K(y), \mca{A}_H(x) \bigr] \quad (\forall s \in \R).
\end{align*}
Then, Theorem 1.14 (i) in \cite{AS1} shows that 
$u(\R \times S^1)$ is contained in some compact set, which depends on $x$ and $y$
(in that theorem $s$-independent Hamiltonians are considered, however its proof makes use of only the above inequalities).
\end{proof}
We call the homomorphism in Proposition \ref{prop:HFinAS3} the \text{monotonicity homomorphism}.

One can define $\varinjlim_{H|_{S^1 \times DT^*M}<0} \HF^{<a}_*(H)$ for any $-\infty \le a \le \infty$, by taking a direct limit 
with respect to the monotonicity homomorphism in Proposition \ref{prop:HFinAS3}.
We have to check that it coincides with trucated Floer homology defined in section 2.2.2:

\begin{prop}\label{prop:HFinAS4}
For any $-\infty \le a \le \infty$, there exists a natural isomorphism 
\[
\HF^{<a}_*(DT^*M) \cong \varinjlim_{H|_{S^1 \times DT^*M}<0} \HF^{<a}_*(H),
\]
where the left hand side is truncated Floer homology defined in section 2.2.2, and 
the right hand side is a direct limit with respect to the monotonicity homomorphism.
\end{prop}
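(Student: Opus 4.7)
The plan is to prove Proposition \ref{prop:HFinAS4} by showing that the two directed systems appearing on the left and right can be linked by a natural construction $H \mapsto \tilde{H}$ that induces a chain-level isomorphism on truncated Floer complexes, and that this construction is cofinal.

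First, I would define, for each admissible $H \in \mca{H}^\neg_\ad(DT^*M, \lambda_M)$ with $H_t(z,r) = a_Hr+b_H$ on $\partial X \times [r_0,\infty)$, an auxiliary Hamiltonian $\tilde{H}$ that agrees with $H$ on $X(r_0)$ and is modified beyond $r_0$ by adding a bump $\phi(r-r_0)$ which vanishes on $[0,\varepsilon]$ and grows like $\alpha(r-r_0)^2$ for large $r$. After a small generic perturbation outside $X(r_0)$, $\tilde{H}$ satisfies (H0); conditions (H1) and (H2) follow at once from the quadratic growth. The essential claim is that the inclusion of generators induces a chain isomorphism $\CF^{<a}_*(H) \xrightarrow{\cong} \CF^{<a}_*(\tilde{H})$ for every $-\infty \le a \le \infty$.

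To prove the essential claim I would argue separately on generators and differentials. On generators, orbits of $H$ all lie in $X(r_0)$ since $a_H \notin \Spec(X,\lambda)$, and coincide there with the orbits of $\tilde{H}$; for any $1$-periodic orbit $x$ of $\tilde{H}$ whose image meets $\partial X \times (r_0,\infty)$, direct evaluation of $\mca{A}_{\tilde{H}}$ using the quadratic Ansatz yields a bound $\mca{A}_{\tilde{H}}(x) \le -C r(x)^2 + \text{const}$, so after enlarging $r_0$ in terms of $a$ no such orbit enters $\CF^{<a}_*(\tilde{H})$. On differentials, Floer trajectories of $H$ stay in $X(r_0)$ by Lemma \ref{lem:convexity}; for $\tilde{H}$, the $C^0$-estimate of \cite{AS1} Theorem 1.14(i) forces trajectories between two orbits of action $<a$ to remain in a compact set, while on the linear slab $\partial X \times [1,r_0]$ the contact-type property of $J_M$ together with Lemma \ref{lem:convexity} prevents any trajectory from crossing $\partial X \times \{r_0\}$ outwards. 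Hence the two chain complexes are identical. A straightforward check shows $H \mapsto \tilde{H}$ is compatible with the monotonicity maps on both sides (up to a homotopy of the interpolating family), so one obtains a well-defined comparison map $\HF^{<a}_*(DT^*M) \to \varinjlim_K \HF^{<a}_*(K)$.

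For the reverse direction, I would verify cofinality: given any $K$ satisfying (H0)(H1)(H2) with $K|_{DT^*M}<0$, one can find an admissible $H$ whose associated $\tilde{H}$ dominates $K$ everywhere (take $H$ with slope $a_H$ so large that $\tilde{H}$ dwarfs the quadratic $K$ beyond a sufficiently large $X(R)$ containing all orbits and trajectories of $K$ relevant to the action window $<a$). The resulting monotonicity map $\HF^{<a}_*(K) \to \HF^{<a}_*(\tilde{H}) \cong \HF^{<a}_*(H)$ provides an inverse on direct limits, completing the proof. The main obstacle is the bookkeeping in the $C^0$-estimate for $\tilde{H}$: one has to choose the slope $a_H \notin \Spec(X,\lambda)$, the transition radius $r_0$, and the quadratic coefficient $\alpha$ consistently so that Lemma \ref{lem:convexity} controls trajectories on the linear slab while the \cite{AS1} quadratic estimate controls them beyond, and then verify that these choices vary functorially with $H$ so that the induced maps on direct limits are genuinely well-defined.
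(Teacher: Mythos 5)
Your approach runs the paper's construction in reverse: the paper starts from a cofinal family of quadratic Hamiltonians $\mca{H}_a$, linear on a slab $\partial X \times [2,3]$, and \emph{linearizes} each one to $H^{\lin} \in \mca{H}_{\ad}$; you instead start from admissible linear Hamiltonians and \emph{quadratify} them to $\tilde{H}$. These are mirror images of the same idea, so the overall strategy is sound, but two of your steps contain errors that must be fixed.

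First, the action estimate has the wrong sign. For a radially symmetric Hamiltonian $h(r)$ on $\partial X \times (r_0,\infty)$, the $1$-periodic orbits sit at levels $r$ with $h'(r) \in \Spec(X,\lambda)$ and have action $\mca{A}(x) = r h'(r) - h(r)$, which is an \emph{increasing} function of $r$ whenever $h'' \ge 0$. For a quadratic tail $h(r) = \alpha r^2 + \cdots$ this grows like $+\alpha r^2$, not $-C r^2$. Your stated bound $\mca{A}_{\tilde{H}}(x) \le -C r(x)^2 + \const$, taken literally, implies that every orbit outside $X(r_0)$ satisfies $\mca{A}_{\tilde{H}}(x) < a$ and hence \emph{does} generate $\CF^{<a}_*(\tilde{H})$, which is the opposite of what you need. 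The correct argument: since $a_H \notin \Spec(X,\lambda)$ there is a spectral gap, and the lowest such orbit outside $X(r_0)$ has action $\ge r_0 \cdot \bigl(\min\{T \in \Spec(X,\lambda) : T > a_H\} - a_H\bigr) - b_H$, which tends to $+\infty$ as $r_0 \to \infty$; this, not your bound, is what justifies enlarging $r_0$ in terms of $a$.

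Second, and more seriously, your ``essential claim'' that $\CF^{<a}_*(H) \cong \CF^{<a}_*(\tilde{H})$ holds for \emph{every} $-\infty \le a \le \infty$ is false for $a = \infty$. A quadratic Hamiltonian has $1$-periodic orbits at unboundedly many levels $r$, whereas the linear $H$ has all of $\mca{P}(H)$ inside a compact set; the two chain complexes have different generating sets, and no choice of $r_0$ helps. The paper sidesteps this at the outset with the one-line remark that it suffices to treat $a < \infty$, since both sides commute with the direct limit $\varinjlim_{a < \infty}$. You must make the same reduction before anything else you write can go through.

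Two smaller points: the convexity lemma that controls Floer trajectories crossing the slab $\partial X \times [r_0 - \varepsilon, r_0]$ should be Lemma \ref{lem:convexity3}, not Lemma \ref{lem:convexity}; the latter requires linearity at infinity, which $\tilde{H}$ does not have. And the cofinality/functoriality check you defer to the end is exactly the place where the paper's opposite direction pays off: by modifying the \emph{quadratic} Hamiltonians, the paper's directed system is already the full one on the right-hand side and cofinality is automatic, whereas in your direction you must produce, for each quadratic $K$, a linear $H$ whose associated $\tilde{H}$ dominates $K$ everywhere and whose auxiliary parameters $(r_0, \alpha)$ are compatible with the ones already chosen --- this is doable but requires bookkeeping you have only gestured at.
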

\begin{proof}
In this proof, we denote $(DT^*M,\lambda_M)$ as $(X,\lambda)$. 
Let $\mca{H}$ denote the set of $H \in C^\infty(S^1 \times \hat{X})$, which satisfies 
(H0), (H1), (H2) and $H|_{S^1 \times X}<0$.
Then the right hand side in the statement is written as $\varinjlim_{H \in \mca{H}} \HF^{<a}_*(H)$.
Notice that it is enough to consider the case $a<\infty$. 
Let $\mca{H}_a$ be the set of $H \in \mca{H}$, which satisfies the following properties:
\begin{itemize}
\item There exist $a_H \in (0,\infty) \setminus \Spec(X,\lambda)$, $b_H \in \R$ such that 
$H_t(z,r) = a_Hr+b_H$ for any $(z,r) \in \partial X \times [2,3]$.
\item Any $x \in \mca{P}(H)$ satisfying $\mca{A}_H(x)<a$ is contained in $X(2)$.
\end{itemize}
It is easy to see that $\mca{H}_a$ is cofinal in $\mca{H}$. 
Hence the natural homomorphism $\varinjlim_{H \in \mca{H}_a} \HF^{<a}_*(H) \to \varinjlim_{H \in \mca{H}} \HF^{<a}_*(H)$ is isomorphic.

For each $H \in \mca{H}_a$, we define $H^{\lin} \in \mca{H}_\ad(X,\lambda)$ by
\[
H^\lin_t(x):= \begin{cases}
               H_t(x) &\bigl( x \in X(2) \bigr), \\
               a_Hr+b_H   &\bigl( x=(z,r) \in \partial X \times [2,\infty) \bigr).
              \end{cases}
\]
Since any $x \in \mca{P}(H)$ with $\mca{A}_H(x)<a$ is contained in $X(2)$, 
there exists a natural isomorphism of modules $\CF^{<a}_*(H) \cong \CF^{<a}_*(H^\lin)$.
Since $J_M$ is of contact type on $\partial X \times (0,\infty)$ (hence on $\partial X \times [2,3]$), 
Lemma \ref{lem:convexity3} shows that the above isomorphism of modules is an isomorphism of complexes between
$(\CF^{<a}_*(H), \partial_{H,J_M})$ and $(\CF^{<a}_*(H^\lin), \partial_{H^\lin,J_M})$.
Hence we get an isomorphism $\HF^{<a}_*(H) \cong \HF^{<a}_*(H^\lin)$.

Suppose that $H, K \in \mca{H}_a$ satisfy $H_t(x) \le K_t(x)$ for any $t \in S^1$, $x \in \hat{X}$.
Then, Lemma \ref{lem:convexity3} shows that, if $b_H \ge  b_K$ 
\[
\xymatrix{
\HF^{<a}_*(H) \ar[r]^-{\cong}\ar[d] & \HF^{<a}_*(H^\lin) \ar[d] \\
\HF^{<a}_*(K) \ar[r]_-{\cong} & \HF^{<a}_*(K^\lin) 
}
\]
commutes, where vertical arrows are monotone homomorphisms.

It is easy to see that there exists a cofinal sequence $(H^i)_{i=1,2,\ldots}$ in $\mca{H}_a$, such that 
$H^1 \le H^2  \le \cdots$ and $b_{H^1} \ge b_{H^2} \ge \cdots$. 
Hence $\varinjlim_{H \in \mca{H}_a} \HF_*^{<a}(H) \cong \varinjlim_{H \in \mca{H}_a} \HF_*^{<a}(H^\lin)$.
On the otherhand, since $\{H^\lin\}_{H \in \mca{H}_a}$ is cofinal in $\{ H \in \mca{H}_{\ad}(X,\lambda) \mid H|_{S^1 \times X}<0\}$, 
the natural homomorphism $\varinjlim_{H \in \mca{H}_a} \HF_*^{<a}(H^\lin) \to \HF_*^{<a}(X)$ is isomorphic. 
Hence we end up with an isomorphism $\varinjlim_{H \in \mca{H}} \HF_*^{<a}(H) \cong \HF_*^{<a}(X)$.
\end{proof}

Suppose that $H,K \in C^\infty(S^1 \times T^*M)$ satisfy the following conditions:
\begin{itemize}
\item $\partial_t^rH|_{t=0}= \partial_t^rK|_{t=0}$ for any integer $r \ge 0$.
\item $H, K, H*K$ satisfy (H0), (H1) and (H2).
\end{itemize}
Under these assumptions, for any $-\infty \le a,b \le \infty$ 
one can define the pair-of-pants product
\[
\HF^{<a}_i(H) \otimes \HF^{<b}_j(K) \to \HF^{<a+b}_{i+j-n}(H*K)
\]
in the same manner as in section 2.3
(see section 3.3 in \cite{AS2}).
This product commutes with the monotonicity homomorphism defined in Proposition \ref{prop:HFinAS3}.
(The only delicate point is a $C^0$ estimate for solutions of the Floer equation, and it is proved by arguments 
similar to the proof of Proposition \ref{prop:HFinAS3}. See section 6.1 in \cite{AS2}.)
Hence one can define a product 
\[
\varinjlim_{H|_{S^1 \times DT^*M}<0} \HF^{<a}_i(H) \otimes
\varinjlim_{H|_{S^1 \times DT^*M}<0} \HF^{<b}_j(H) \to
\varinjlim_{H|_{S^1 \times DT^*M}<0} \HF^{<a+b}_{i+j-n}(H).
\]
The isomorphism in Proposition \ref{prop:HFinAS4} interwines this product with 
the pair-of-pants product on truncated Floer homology, which was defined in section 2.3.

\subsection{Main results in \cite{AS1}, \cite{AS2}}
In section 6.2.1, we construct a Morse complex of a Lagrangian action functional on a loop space, and see that 
its homology group is naturally isomorphic to singular homology of the loop space. 
In section 6.2.2,  we state the main results in \cite{AS1}, \cite{AS2} in a rigorous term. 

\subsubsection{Morse complexes on loop spaces}

Let $M$ be a Riemannian manifold, and $L \in C^\infty(S^1 \times TM)$.
For any $t \in S^1$, $L_t \in C^\infty(TM)$ is defined as $L_t(q,v):=L(t,q,v)$. 
In \cite{AS1}, the authors introduce the following conditions on $L$:
\begin{enumerate}
\item[(L1):] $\nabla_{vv}L_t(q,v) \ge l_1$ for some $l_1>0$. 
\item[(L2):] $|\nabla_{qq}L_t(q,v)| \le l_2(1+|v|^2)$, $|\nabla_{qv} L_t(q,v)| \le l_2(1+|v|)$, $|\nabla_{vv}L_t(q,v)| \le l_2$ for some $l_2 \ge 0$.
\end{enumerate}
The Lagrangian action functional $\mca{A}_L$ on $\Lambda_M$  is defined as 
\[
\mca{A}_L(\gamma):= \int_{S^1} L_t\bigl(\gamma(t), \gamma'(t) \bigr) \, dt, 
\]
and the corresponding Euler-Lagrange equation is 
\[
\frac{d}{dt} \partial_vL_t(\gamma(t), \gamma'(t))= \partial_qL_t(\gamma(t), \gamma'(t)).
\]
Let $\mca{P}(L)$ denote the set of solutions of the above Euler-Lagrange equation, and consider the following condition on $L$:
\begin{enumerate}
\item[(L0):] All orbits in $\mca{P}(L)$ are nondegenerate. 
\end{enumerate}

In \cite{AS3} (Theorem 4.1), it is proved that if $L \in C^\infty(S^1 \times TM)$ satisfies (L0), (L1) and (L2), then 
there exists a smooth vector field $X$ on $\Lambda_M$ with the following properties
(for terminologies, see \cite{AS3}):
\begin{enumerate}
\item[(X1):] $X$ is complete.
\item[(X2):] $\mca{A}_L$ is a Lyapnov function for $X$.
\item[(X3):] $X$ is Morse and all its singular points have finite Morse index.
\item[(X4):] The pair $(X, \mca{A}_L)$ satisfies the Palais-Smale condition. 
\item[(X5):] $X$ satisfies the Morse-Smale condition up to every order.
\end{enumerate}
Let $X$ be a smooth vector field on $\Lambda_M$ which satisfies (X1)-(X5). 
$\varphi_X: \Lambda_M \times \R \to \Lambda_M$ denotes the flow generated by $X$.
For any $x \in \mca{P}(L)$, $i(x:L)$ denotes the Morse index of $\mca{A}_L$ at $x$.
The \textit{stable} and \textit{unstable} mainifolds at $x \in \mca{P}(L)$ are defined as
\begin{align*}
W^s(x:X)&:=\{p \in \Lambda_M \mid \lim_{t \to \infty} \varphi_X(t,p)= x \}, \\
W^u(x:X)&:=\{p \in \Lambda_M \mid \lim_{t \to-\infty} \varphi_X(t,p)= x \}.
\end{align*}
They are submanifolds of $\Lambda_M$, and $\dim W^u(x:X)=i(x:L)$ (see Theorem 1.20 in \cite{AM}).
The following fact follows from results in \cite{AM} (see Corollary 4.1 in \cite{AS3}):
\begin{itemize}
\item For any $a \in \R$, let $\CM_*^{<a}(L)$ be a free $\Z_2$ module generated over $\bigl\{ x \in \mca{P}(L) \bigm{|} \mca{A}_L(x)<a, i(x:L)=* \bigr\}$.
Define $\partial_{L,X}: \CM_*^{<a}(L) \to \CM_{*-1}^{<a}(L)$ by 
\[
\partial_{L,X}[x]:= \sum_y \partial_{x,y} \cdot [y], 
\]
where $\partial_{x,y}$ is the (mod $2$) number of a compact $0$-dimensional manifold $W^u(x:X) \cap W^s(y:X)/\R$. 
Then, $(\CM_*^{<a}(L), \partial_{L,X})$ is a chain complex.
Its homology group $\HM_*^{<a}(L,X)$ is called \textit{Morse homology}.
\item $\HM_*^{<a}(L,X)$ is naturally isomorphic to singular homology $H_*\bigl(\{\mca{A}_L < a\}\bigr)$.
\end{itemize}

Suppose that $L^1, L^2 \in C^\infty(S^1 \times TM)$ 
satisfy (L0), (L1), (L2) and 
$L^1_t(q,v) \ge L^2_t(q,v)$ for any $t \in S^1$ and $(q,v) \in TM$. 
Then obviously $\{\mca{A}_{L^1}<a\} \subset \{\mca{A}_{L^2}<a\}$. We will describe a natural homomorphism 
$H_*\bigl(\{\mca{A}_{L^1}<a\}\bigr) \to H_*\bigl(\{\mca{A}_{L^2}<a\}\bigr)$ in terms of Morse homology, following 
Appendix A.2 in \cite{AS2}.

Take smooth vector fields $X^1, X^2$ on $\Lambda_M$ so that $X^i$ satisfies (X1)-(X5) with $L^i$, and assume that 
$\mca{P}(L^1) \cap \mca{P}(L^2) \cap \{\mca{A}_{L^1}<a\} = \emptyset$. 
Then, up to $C^\infty$-small perturbations of $X^1$ and $X^2$, we may assume that
\begin{quote}
For any $x \in \mca{P}(L^1) \cap \{\mca{A}_{L^1}<a\}$ and 
$y \in \mca{P}(L^2)$, 
$W^u(x:X^1)$ is transversal to $W^s(y,X^2)$.
\end{quote}
Then, one can define a chain map $\varphi: \CM_*^{<a}(L^1,X^1) \to \CM_*^{<a}(L^2,X^2)$ by 
\[
\varphi [x]: = \sum_y \varphi_{x,y} \cdot [y], 
\]
where $\varphi_{x,y}$ is the (mod $2$) number of $W^u(x:X^1) \cap W^s(y:X^2)$.
$\varphi$ induces a homomorphism on homology $\Phi: \HM_*^{<a}(L^1,X^1) \to \HM_*^{<a}(L^2,X^2)$, 
and $\Phi$ satisfies the following commutative diagram
(for proofs, see Appendix A.2 in \cite{AS2}): 
\[
\xymatrix{
\HM^{<a}_*(L^1,X^1) \ar[r]^-{\cong}\ar[d]_{\Phi} & H_*\bigl(\{\mca{A}_{L^1}<a\}\bigr) \ar[d] \\
\HM^{<a}_*(L^2,X^2) \ar[r]_-{\cong} & H_*\bigl(\{\mca{A}_{L^2}<a\}\bigr)
}.
\]

\subsubsection{Computations of Floer homology of cotangent bundles}

Suppose that $L \in C^\infty(S^1 \times TM)$ satisfies (L1), (L2).
(L1) implies that $L_t$ is strictly convex on each tangent fiber for any $t \in S^1$, 
hence one can define its Fenchel dual $H=(H_t)_{t \in S^1}$, namely 
\[
H_t(q,p):= \max_{v \in T_qM} p(v) - L_t(q,v) \qquad \bigl( (q,p) \in T^*M \bigr).
\]
There exists a $1:1$ correspondence between $\mca{P}(L)$ and $\mca{P}(H)$, and $L$ satisfies (L0) if and only if $H$ satisfies (H0).
The main result in \cite{AS1} is the following:
\begin{thm}\label{thm:AS}
Let $M$ be a compact oriented Riemannian manifold.
Suppose that $L \in C^\infty(S^1 \times TM)$ satisfies (L0), (L1), (L2), and
its Fenchel dual $H \in C^\infty(S^1 \times T^*M)$ satisfies (H0), (H1), (H2). 
\begin{enumerate}
\item For any $a>0$, there exists a natural isomorphism 
$H_*\bigl(\{\mca{A}_L<a\}\bigr) \cong \HF^{<a}_*(H)$. 
\item For any $a,b>0$ such that $a \le b$, the following diagram commutes:
\[
\xymatrix{
H_*\bigl(\{\mca{A}_L<a\}\bigr)\ar[r]^-{\cong}\ar[d] & \HF_*^{<a}(H) \ar[d] \\
H_*\bigl(\{\mca{A}_L<b\}\bigr)\ar[r]_-{\cong}& \HF_*^{<b}(H)
}.
\]
\end{enumerate}
\end{thm}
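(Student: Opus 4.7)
The plan is to reproduce, at the level of an outline, the chain-level isomorphism of Abbondandolo--Schwarz relating Morse homology of $\mca{A}_L$ on $\Lambda_M$ to Floer homology of its Fenchel dual $H$ on $T^*M$. First I would identify the generators. The Legendre transform $(q,v) \mapsto (q, \partial_v L_t(q,v))$ gives a bijection $\mca{P}(L) \to \mca{P}(H)$ which preserves the action, in the sense that $\mca{A}_L(\gamma) = \mca{A}_H(x)$ whenever $\gamma$ corresponds to $x$. Under the oriented hypothesis on $M$, one checks (as in Lemma~1.2 of \cite{AS1}) that the Morse index $i(\gamma:L)$ of a critical point of $\mca{A}_L$ coincides with the Conley--Zehnder index $\ind_\CZ(x)$ of the corresponding Hamiltonian orbit. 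Thus, for each $a>0$ and each homotopy class $\alpha$ there is a tautological grading- and filtration-preserving $\Z_2$-linear bijection between the underlying modules of $\CM_*^{<a}(L)$ and $\CF_*^{<a}(H)$.

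Next I would construct a chain-level comparison via hybrid moduli spaces. Fix a Morse--Smale vector field $X$ on $\Lambda_M$ satisfying (X1)--(X5). For $\gamma \in \mca{P}(L)$ and $x \in \mca{P}(H)$ with $i(\gamma:L) = \ind_\CZ(x)$, consider half-cylinders $u:[0,\infty)\times S^1 \to T^*M$ solving the Floer equation with $J_M$ and $H$, asymptotic to $x$ at $+\infty$, and whose boundary loop $t \mapsto \pi_M(u(0,t))$ lies in $W^u(\gamma:X)$. Counting these modulo the natural $\R$-free flow defines a chain map
\[
\Theta:\CM_*^{<a}(L,X)\to \CF_*^{<a}(H).
\]
The filtration claim $\mca{A}_L(\gamma) \ge \mca{A}_H(x)$ along such a half-cylinder follows from the fact that the Legendre transform makes $\mca{A}_L$ dominate the Floer action along the boundary loop, together with the usual energy identity. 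Compactness in $T^*M$ of the relevant moduli spaces is the crucial analytic point, and is exactly where (H1), (H2) (and the Lagrangian growth conditions (L1), (L2)) are used, via the $C^0$-estimate of section~1.5 of \cite{AS1}.

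To show $\Theta$ is a quasi-isomorphism I would use a filtration argument. The action filtrations on both sides are compatible, and for Hamiltonians $H$ sufficiently $C^2$-small on a compact set and quadratic outside (or for short action windows), both complexes reduce to a Morse complex of a function on $M$, where the identification is immediate. One then argues by a five-lemma/exhaustion argument that $\Theta$ induces isomorphisms on all truncated homologies, and that these isomorphisms commute with the maps $H_*(\{\mca{A}_L<a\}) \to H_*(\{\mca{A}_L<b\})$ and $\HF_*^{<a}(H) \to \HF_*^{<b}(H)$ induced by inclusion of action sublevels (this gives part~(2)). For part (2), commutativity comes from the fact that a generator at action level $<a$ is tautologically sent to a generator at action level $<a$, and the comparison chain map used to go from $a$ to $b$ on the Morse side and the inclusion of subcomplexes on the Floer side fit into compatible hybrid moduli problems.

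The main obstacle will be the analytic groundwork: establishing the $C^0$-estimate for Floer half-cylinders with unstable-manifold boundary condition, ruling out breaking or escape to infinity, and verifying transversality for generic choices of $X$ and perturbation of $H$. These estimates, together with the Morse index versus Conley--Zehnder index identification, are the genuinely nontrivial ingredients; everything else is a formal consequence once the chain-level map $\Theta$ and its inverse (constructed analogously by swapping the roles of the two ends) are shown to be well-defined and to preserve the action filtration.
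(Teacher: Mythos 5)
Your proposal follows essentially the same strategy as the paper: the comparison map from the Morse complex of $\mca{A}_L$ to the Floer complex of $H$ is defined by counting Floer half-cylinders $u:[0,\infty)\times S^1\to T^*M$ asymptotic to $x$ at $+\infty$ whose boundary loop $\pi_M\circ u(0,\cdot)$ lies in $W^u(\gamma:X)$, and the action inequality $\mca{A}_L(\gamma)\ge\mca{A}_H(x)$ makes this filtration-preserving. That part of your outline matches the paper's construction.

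There are, however, two problems. First, you propose to count these half-cylinders ``modulo the natural $\R$-free flow.'' There is no such $\R$-action: the domain $[0,\infty)\times S^1$ is not translation-invariant, and when $i(\gamma:L)=\ind_\CZ(x)$ the moduli space $\mca{M}(\gamma,x)$ is already zero-dimensional, so one counts it directly. Quotienting would give the wrong dimension.

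Second, and more substantively, the way you propose to show that the chain map is an isomorphism on homology (reduce to $C^2$-small Hamiltonians, then a five-lemma/exhaustion argument) is vague and does not obviously apply to the fixed pair $(L,H)$ in the statement. The argument in the paper, following \cite{AS1}, is sharper and more direct: $\psi^{<a}:\CM^{<a}_*(L,X)\to\CF^{<a}_*(H)$ is a \emph{chain-level} isomorphism, not merely a quasi-isomorphism, because its matrix in the bases of critical points/orbits ordered by action is upper triangular with $1$'s on the diagonal. Upper triangularity is exactly the vanishing $\mca{M}(\gamma,x)=\emptyset$ when $\mca{A}_H(x)>\mca{A}_L(\gamma)$, and the diagonal entries equal $1$ because at equal action the only solution is the $s$-independent half-cylinder $u(s,t)=x_\gamma(t)$, $x_\gamma$ being the Legendre transform of $\gamma$. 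You actually record the action-preserving bijection of generators in your first paragraph, but then do not use it to conclude triangularity; that is the missing step. Relatedly, constructing an inverse ``by swapping the roles of the two ends'' is not what \cite{AS1} do, and the Fredholm and compactness theory for that reverse moduli problem is considerably more delicate than you suggest. Once triangularity is in hand, part (2) is immediate, as you note.
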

\begin{proof}
We only describe a construction of a chain level isomorphism to prove (1). 
Let $X$ be a smooth vector field on $\Lambda_M$ satisfying (X1)-(X5) with $L$. 
For $\gamma \in \mca{P}(L)$ and $x \in \mca{P}(H)$, consider the following moduli space:
\begin{align*}
\mca{M}(\gamma,x):=
\bigl\{ u: [0,\infty) \times S^1 \to T^*M &\bigm{|} \partial_s u- J_M(\partial_t u- X_H(u))=0, \\
& \qquad  \lim_{s \to \infty} u(s)=x, \,
\pi_M\bigl(u(0)\bigr) \in W^u(\gamma:X) \bigr\}.
\end{align*}
Following facts are proved in \cite{AS1}:
\begin{itemize}
\item Up to perturbations of $H$, $\mca{M}(\gamma,x)$ is a smooth manifold of dimension $i(\gamma:L)- \ind_{\CZ}(x)$. 
\item When $i(\gamma:L)-\ind_{\CZ}(x)=0$, $\mca{M}(\gamma,x)$ is compact (hence is a finite point set). We define $\psi:\CM_*(L,X) \to \CF_*(H,J_M)$
by
\[
\psi[\gamma]:= \sum_x \sharp \mca{M}(\gamma,x) \cdot [x].
\]
\item If $\mca{A}_H(x)>\mca{A}_L(\gamma)$ then $\mca{M}(\gamma,x)=\emptyset$.
Hence for any $a \in \R$, $\psi$ maps $\CM^{<a}_*(L:X)$ to $\CF^{<a}_*(H,J_M)$.
\item $\psi^{<a}: \CM^{<a}_*(L:X) \to \CF^{<a}_*(H,J_M)$ is isomorphic, and is a chain map. 
Hence it induces an isomorphism $\Psi^{<a}: \HM^{<a}_*(L:X) \to \HF_*^{<a}(H)$.
\end{itemize}
Combined with the natural isomorphism $\HM_*^{<a}(L:X) \cong H_*\bigl(\{\mca{A}_L<a\}\bigr)$, 
this proves the assertion (1). 
(2) is obvious from the construction. 
\end{proof}
\begin{rem}\label{rem:alpha}
It is clear from the above construction that for any homotopy class $\alpha$ of free loops on $M$, there exists a natural isomorphism 
\[
H_*\bigl(\{\gamma \in \Lambda_M \mid [\gamma]=\alpha, \mca{A}_L(\gamma)<a\}\bigr) \cong \HF_*^{<a,\alpha}(H).
\]
\end{rem}

It is easy to prove the following proposition from
the construction of the chain level isomorphsim in the proof of Theorem \ref{thm:AS} and 
results in section 6.2.1:

\begin{prop}\label{prop:AS2}
Let $M$ be a compact oriented Riemannian manifold.
Let $L^0, L^1 \in C^\infty(S^1 \times TM)$ satisfy (L0), (L1), (L2) and 
their Fenchel dual $H^0, H^1 \in C^\infty(S^1 \times T^*M)$ satisfy (H0), (H1), (H2). 
If $L^0_t(q,v) > L^1_t(q,v)$ for any $(t,q,v) \in S^1 \times TM$, then
$H^0_t(q,p) < H^1_t(q,p)$ for any $(t,q,p) \in S^1 \times T^*M$. 
Moreover, the following diagram commutes for any $a \in \R$: 
\[
\xymatrix{
H_*\big(\{\mca{A}_{L^0}<a\}\bigr)\ar[r]^-{\cong}\ar[d] & \HF_*^{<a}(H^0) \ar[d] \\
H_*\big(\{\mca{A}_{L^1}<a\}\bigr)\ar[r]_-{\cong} & \HF_*^{<a}(H^1)
}.
\]
The horizontal arrows are isomorphisms in Theorem \ref{thm:AS}, 
the left arrow is induced by the inclusion $\{\mca{A}_{L^0}<a\} \to \{\mca{A}_{L^1}<a\}$, and 
the right arrow is the monotonicity homomorphism defined in Proposition \ref{prop:HFinAS3}.
\end{prop}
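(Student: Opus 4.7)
The first assertion is a quick consequence of Fenchel duality. For any $(t,q,p)$, let $v_0 \in T_qM$ be a point at which the maximum in $H^0_t(q,p) = \max_v [p(v) - L^0_t(q,v)]$ is attained; such a $v_0$ exists by (L1). Then
\[
H^0_t(q,p) = p(v_0) - L^0_t(q,v_0) < p(v_0) - L^1_t(q,v_0) \le H^1_t(q,p),
\]
where the strict inequality uses the pointwise strict inequality $L^0 > L^1$, and the last estimate is the definition of $H^1_t(q,p)$ as a supremum.

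For the commutativity of the diagram, my plan is to upgrade the chain-level isomorphism $\psi^{<a}$ from the proof of Theorem \ref{thm:AS} to a chain-level homotopy. Pick smooth vector fields $X^0, X^1$ on $\Lambda_M$ satisfying (X1)--(X5) with $L^0$ and $L^1$, respectively, and in general position with respect to each other in the sense reviewed in section 6.2.1. The left vertical arrow in the diagram is then represented by the Morse chain map $\varphi\colon \CM_*^{<a}(L^0,X^0) \to \CM_*^{<a}(L^1,X^1)$ counting transverse intersections $W^u(\gamma_0:X^0)\cap W^s(\gamma_1:X^1)$, while the right vertical arrow is represented on the Floer side by counting solutions of the $s$-dependent Floer equation with a monotone homotopy $H^{s,t}$ from $H^0$ to $H^1$, as in Proposition \ref{prop:HFinAS3}.

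To prove $\psi^1 \circ \varphi$ and the Floer monotonicity map composed with $\psi^0$ agree on homology, I would introduce a parametrized moduli space: for $\gamma \in \mathcal{P}(L^0)$ and $x\in \mathcal{P}(H^1)$, consider pairs consisting of a half-cylinder $u\colon [0,\infty)\times S^1 \to T^*M$ satisfying
\[
\partial_s u - J_M(\partial_t u - X_{H^{s,t}}(u)) = 0, \qquad \lim_{s\to\infty} u(s) = x,
\]
together with the boundary constraint $\pi_M(u(0)) \in W^u(\gamma:X^0)$. For generic $H^{s,t}$ this is a smooth manifold of dimension $i(\gamma:L^0) - \ind_{\CZ}(x)$; its zero-dimensional part defines the composed chain map, and the boundary of its one-dimensional part decomposes into exactly two strata -- one where the half-cylinder breaks off a Floer trajectory at $s=+\infty$ (giving $\Phi^{\mathrm{mon}}\circ \psi^0$) and one where a Morse flow line breaks off at $s=0$ through an intermediate critical point of $\mathcal{A}_{L^1}$ (giving $\psi^1 \circ \varphi$). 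The action estimate
\[
\mathcal{A}_{H^1}(x) \le \mathcal{A}_{L^0}(\gamma) - \int |\partial_s u|^2 - \int \partial_s H^{s,t}(u)\, ds\, dt \le \mathcal{A}_{L^0}(\gamma)
\]
(using $\partial_s H^{s,t} \ge 0$, which follows from the monotone choice $H^{s,t}$ interpolating $H^0$ and $H^1$ together with assertion (1) of this proposition) shows the moduli spaces respect the $<a$ truncation.

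The main technical obstacle will be the $C^0$-estimate for solutions of the $s$-dependent Floer equation on the half-cylinder, since $H^{s,t}$ grows quadratically at infinity and Lemma \ref{lem:convexity} does not apply. This is handled by the same strategy as in section 1.5 of \cite{AS1} and section 6.1 of \cite{AS2}: combine the monotonicity $\partial_s H^{s,t}\ge 0$ with (H1), (H2) to bound $\mathcal{A}_{H^s}(u(s))$ uniformly in $s$, and invoke Theorem 1.14(i) of \cite{AS1} to obtain a compact containing the image of $u$. With this in hand, the standard gluing/cobordism argument outlined above produces the required chain homotopy, and passage to homology gives the desired commutative square.
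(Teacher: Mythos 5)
The Fenchel-duality part is correct: picking $v_0$ realizing the maximum for $H^0$ gives $H^0_t(q,p)=p(v_0)-L^0_t(q,v_0)<p(v_0)-L^1_t(q,v_0)\le H^1_t(q,p)$, and the action estimate and $C^0$-bound reductions you invoke for the hybrid half-cylinders are the right tools. The paper itself offers no written proof (it says the proposition is ``easy'' from the construction of the chain-level isomorphism and the Morse-homology comparison in section 6.2.1), so the question is whether your sketch actually closes the diagram.

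Here there is a genuine gap, in the claimed decomposition of the one-dimensional moduli space. Your moduli space of half-cylinders $u\colon[0,\infty)\times S^1\to T^*M$ with an $s$-dependent interpolation $H^{s,t}$ from $H^0$ to $H^1$, constrained by $\pi_M(u(0))\in W^u(\gamma:X^0)$ and $\lim_{s\to\infty}u(s)=x\in\mca{P}(H^1)$, defines \emph{one} chain map, call it $\chi\colon \CM_*^{<a}(L^0,X^0)\to\CF_*^{<a}(H^1)$. The boundary of its one-dimensional part gives exactly the chain-map identity for $\chi$: a Floer cylinder breaking off at $s=+\infty$ is a trajectory of the $s$-independent Hamiltonian $H^1$, contributing $\partial_{H^1}\circ\chi$, and a Morse breaking at $s=0$ happens through a critical point of $\mca{A}_{L^0}$ (since the only boundary constraint you imposed involves $W^u(\,\cdot\,:X^0)$), contributing $\chi\circ\partial_{L^0,X^0}$. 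Neither of these is $\Phi^{\mathrm{mon}}\circ\psi^0$ nor $\psi^1\circ\varphi$. In particular a break at $s=0$ cannot pass through a critical point of $\mca{A}_{L^1}$ because no $X^1$-flow appears anywhere in your boundary condition, and the broken configuration giving $\Phi^{\mathrm{mon}}\circ\psi^0$ (a $\psi^0$-half-cylinder at $H^0$ glued to a full interpolating cylinder) is not a codimension-one boundary stratum of the fixed-$H^{s,t}$ moduli space — it only appears after one introduces an extra gluing/translation parameter.

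To make the argument work, after defining $\chi$ you must supply two further one-parameter families: a family sliding the support of the interpolation $H^{s,t}$ out to $s=+\infty$, whose compactified boundary at the stretched limit produces the broken configurations of $\Phi^{\mathrm{mon}}\circ\psi^0$ and hence a chain homotopy $\chi\simeq\Phi^{\mathrm{mon}}\circ\psi^0$; and a family that, with $H^{s,t}\equiv H^1$, replaces the constraint $\pi_M(u(0))\in W^u(\gamma:X^0)$ by the constraint $\pi_M(u(0))\in\varphi_{X^1}\bigl(T,W^u(\gamma:X^0)\bigr)$ for $T\in[0,\infty)$, whose breaking as $T\to\infty$ passes through $W^u(\gamma':X^0)\cap W^s(\gamma'':X^1)$ and yields $\psi^1\circ\varphi$. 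Each of these steps needs its own transversality and $C^0$-estimate discussion, and the second also needs the action inequality $\mca{A}_{L^1}\le\mca{A}_{L^0}$ (which you do have) to keep everything in the $<a$-sublevel set. Without these two additional cobordisms, the single moduli space you describe does not compare the two compositions at all.
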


Finally we state the main result in \cite{AS2}:

\begin{thm}\label{thm:AS3}
Let $M$ be a closed Riemannian manifold. 
Let $H, K \in C^\infty(S^1 \times T^*M)$ such that 
$\partial_t^rH|_{t=0}=\partial_t^rK|_{t=0}$ for any integer $r \ge 0$, and 
$H, K, H*K$ satisfy (H0), (H1), (H2). 
Then, the following diagram commutes: 
\[
\xymatrix{
H_i(\Lambda_M) \otimes H_j(\Lambda_M) \ar[r]\ar[d]_{\cong} & H_{i+j-n}(\Lambda_M) \ar[d]^{\cong}\\
\HF_i(H) \otimes \HF_j(K) \ar[r]& \HF_{i+j-n}(H*K).
}
\]
The vertical arrows are isomorphisms in Theorem \ref{thm:AS} (1) with $a=\infty$. 
The top arrow is the loop product, and the bottom arrow is the pair-of-pants product. 
\end{thm}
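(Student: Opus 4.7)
The strategy is to verify the diagram by constructing a chain-level intertwiner built from a hybrid Morse/Floer moduli space. By Theorem \ref{thm:AS}, the vertical isomorphisms are induced by explicit chain maps $\psi^H : \CM_*(L^H, X^H) \to \CF_*(H, J_M)$, and analogously for $K$ and $H*K$, where $L^H, L^K, L^{H*K}$ are the Fenchel duals of $H, K, H*K$ and $X^H, X^K, X^{H*K}$ are pseudo-gradient vector fields on $\Lambda_M$ satisfying (X1)--(X5). It therefore suffices to produce a chain-level homotopy on $\CM_*(L^H, X^H) \otimes \CM_*(L^K, X^K)$ between (a) the composition of the Morse-theoretic loop product with $\psi^{H*K}$ and (b) the composition of $\psi^H \otimes \psi^K$ with the Floer-theoretic pair-of-pants product.

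The intertwiner is defined by counting rigid elements of the following moduli space. Given $\gamma_H \in \mca{P}(L^H)$, $\gamma_K \in \mca{P}(L^K)$, and $z \in \mca{P}(H*K)$, consider triples $(\eta_H, \eta_K, u)$ in which $\eta_H \in W^u(\gamma_H : X^H)$, $\eta_K \in W^u(\gamma_K : X^K)$ satisfy the figure-eight condition $\eta_H(0) = \eta_K(0)$, and $u$ is a Floer solution on the pair-of-pants $\Pi$ for $H*K$ (with $J = J_M$) asymptotic to $z$ at its outgoing end, whose base projection at the incoming ends matches the loops $\eta_H$ and $\eta_K$. Analyzing the ends of the $1$-dimensional components of the compactified moduli space identifies both products: degenerating the Morse flow lines into pure Floer half-cylinders in each factor produces $(\psi^H \otimes \psi^K)$ followed by the pair-of-pants product, while collapsing the Floer surface $u$ onto the concatenation $\Gamma(\eta_H, \eta_K)$ (followed by a residual $X^{H*K}$-gradient flow absorbed into $\psi^{H*K}$) produces the Morse loop product followed by $\psi^{H*K}$. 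Transversality of $\ev \times \ev$ to $\Delta_M$ plays the role of Lemma \ref{lem:umkehr} in identifying the Umkehr factor.

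The main obstacle is almost entirely analytic and is the central content of \cite{AS2}. First, $C^0$-estimates on these hybrid pair-of-pants Floer trajectories are nontrivial: the convexity arguments of Lemma \ref{lem:convexity2} do not apply directly in the quadratic-growth regime, and one must instead use the action-energy estimates of \cite{AS1} adapted to the pair-of-pants geometry. Second, transversality and Fredholm theory for the mixed Morse/Floer problem must be set up on the Hilbert manifold $\Lambda_M$, with careful treatment of the special point $P \in \Pi$ at which the complex structure is nonstandard. Third, and most delicately, the gluing analysis identifying the two boundary strata of the compactified $1$-dimensional moduli space with the asserted two compositions is intricate and constitutes the bulk of the work in \cite{AS2}.
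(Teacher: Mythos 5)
The paper does not give a proof of this theorem. Immediately after stating it, the text says it is ``the main result in \cite{AS2}'' and moves on to section 6.3; the result is treated as a black-box citation to Abbondandolo--Schwarz, \emph{Floer homology of cotangent bundles and the loop product}. There is therefore no internal proof in this paper against which to compare your proposal.

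That said, your sketch is a reasonable high-level description of the strategy that \cite{AS2} actually carries out, and it is consistent with the way the paper sets up the vertical isomorphisms: the chain map $\psi$ of Theorem \ref{thm:AS} is defined by counting half-cylinders $u : [0,\infty) \times S^1 \to T^*M$ whose boundary circle projects into an unstable manifold $W^u(\gamma : X)$, and your proposal extrapolates this to a pair-of-pants domain with the figure-eight constraint $\eta_H(0) = \eta_K(0)$ supplying the Umkehr factor. Two minor imprecisions are worth flagging. First, you write that $u$ is defined on all of $\Pi$ and is ``asymptotic to $z$ at its outgoing end'' while also requiring its ``base projection at the incoming ends'' to match $\eta_H$, $\eta_K$; for this to make sense the incoming ends must be finite boundary circles (a right half of $\Pi$, with boundary at $s=0$), not punctures --- otherwise the asymptotic limits at the incoming ends would be orbits in $\mca{P}(H)$, $\mca{P}(K)$ rather than arbitrary loops in the unstable manifolds, and one cannot impose a pointwise matching condition. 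Second, the phrase ``collapsing the Floer surface $u$ onto the concatenation $\Gamma(\eta_H,\eta_K)$'' glosses over the actual degeneration: the relevant boundary stratum of the $1$-dimensional moduli space should arise when the length of the finite cylindrical part tends to zero, producing the Morse-theoretic loop product followed by the length-$0$ half-pair-of-pants, which is then identified with $\psi^{H*K}$ via a separate (and non-trivial) gluing/cobordism argument in \cite{AS2}. You correctly identify the analytic obstacles ($C^0$-estimates in the quadratic-growth regime, Fredholm/transversality for the mixed problem, gluing) as the actual content of \cite{AS2}; since the paper under review simply cites that work, none of this analysis is reproduced here, and your proposal is best read as a summary of \cite{AS2} rather than as a self-contained proof.
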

\subsection{Proof of theorem \ref{thm:loop}}
First we prove the following lemma:

\begin{lem}\label{lem:sequence}
Let $M$ be a closed Riemannian manifold.
Then, there exists a sequence of elements in $C^\infty(S^1 \times TM)$: 
$L^0>L^1> \cdots$, with the following properties:
\begin{enumerate}
\item For any integer $i$, $L^i$ satisfies (L0), (L1), (L2) and its Fenchel dual $H^i$ satisfies (H0), (H1), (H2).
\item For any $a>0$, $\bigcup_{i=1}^\infty \{\mca{A}_{L^i}<a\} = \Lambda^{<a}_M$.
\item For any $H \in C^\infty(S^1 \times T^*M)$ 
which satisfies (H1), (H2) and $H|_{S^1 \times DT^*M}<0$, 
$H<H^i$ for sufficiently large $i$. 
\end{enumerate}
\end{lem}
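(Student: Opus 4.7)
The plan is to construct $L^i$ as a small, generic, time-dependent perturbation of an autonomous base Lagrangian $L^i_0$ chosen so that $\mathcal{A}_{L^i_0}(\gamma)$ decreases pointwise to $\len(\gamma)$ from above.

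Let $(\delta_i),(\epsilon_i)$ be strictly decreasing positive sequences tending to $0$, and set
\[
L^i_0(q,v) := \sqrt{|v|_q^2 + \delta_i^2} + \frac{\epsilon_i}{2}\,|v|_q^2.
\]
This is smooth, strictly convex in $v$, quadratic at infinity, and strictly decreasing in $i$ at every $(q,v)$. The vertical Hessian satisfies $\nabla_{vv}L^i_0 \geq \epsilon_i g$ (the square-root term contributes a positive semidefinite Hessian, as one checks by Cauchy--Schwarz), and all first and second derivatives are bounded in terms of $i$ and the Riemannian metric, yielding (L1) and (L2). From the Legendre relation $p = v/\sqrt{|v|^2+\delta_i^2} + \epsilon_i v$ one reads off $|p| \sim \epsilon_i|v|$ for $|v|$ large, whence $H^i_0(q,p) \sim |p|^2/(2\epsilon_i)$ as $|p| \to \infty$; applying the identity $p\cdot\nabla_p H - H = L\circ\nabla H$ converts (L1) into (H1) (with $h_0 \simeq 1/(3\epsilon_i)$ at infinity and compact control otherwise), and differentiating the Legendre relation converts (L2) into (H2).

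For (2), one has
\[
\mathcal{A}_{L^i_0}(\gamma) = \int_{S^1}\sqrt{|\gamma'|^2+\delta_i^2}\,dt + \frac{\epsilon_i}{2}E(\gamma) \geq \len(\gamma),
\]
and by dominated convergence (using $|\gamma'|+\delta_0$ as dominating integrand and $E(\gamma)<\infty$) the right-hand side tends to $\len(\gamma)$ from above, so for $\gamma\in\Lambda^{<a}_M$ one has $\mathcal{A}_{L^i_0}(\gamma)<a$ for $i$ large. For (3), any $H$ satisfying (H1)--(H2) obeys $H(q,p) \leq C(1+|p|^2)$ for some $C=C(H)$; the quadratic coefficient of $H^i_0$ is $1/(2\epsilon_i)\to\infty$, so $H^i_0 > H$ off $DT^*M$ for $i$ large, while on $DT^*M$ we have $\inf H^i_0 = -\delta_i \to 0$ and $\sup_{DT^*M} H < 0$ by compactness, so $H^i_0 > H$ on $DT^*M$ once $\delta_i$ is small enough.

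The main obstacle is (L0): since $L^i_0$ is autonomous, every nonconstant $1$-periodic orbit sits in an $S^1$-family of reparametrizations and is degenerate. I would fix this by setting $L^i := L^i_0 + \tau_i(t,q)$ for a small generic $\tau_i \in C^\infty(S^1\times M)$ making $L^i$ nondegenerate; since $\tau_i$ is $v$-independent, (L1)--(L2) are preserved, and the Fenchel dual is $H^i_0(q,p)-\tau_i(t,q)$, inheriting (H0)--(H2). The delicate point is preserving strict monotonicity: the gap $g_i := \inf_{(q,v)}(L^i_0 - L^{i+1}_0)$ is strictly positive (it equals $\delta_i - \delta_{i+1}$ at $v=0$, tends to $+\infty$ as $|v|\to\infty$, and is continuous and positive in between), so requiring $\|\tau_i\|_\infty + \|\tau_{i+1}\|_\infty < g_i$ forces $L^i > L^{i+1}$. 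Because nondegeneracy is a $C^\infty$-generic condition on $\tau_i$, such $\tau_i$ can be chosen inductively satisfying both smallness constraints and genericity.
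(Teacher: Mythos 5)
Your proposal is correct in spirit and takes a genuinely different, more explicit route than the paper. The paper constructs the radial profile abstractly: it posits a decreasing sequence $l^1>l^2>\cdots$ of smooth convex functions on $[0,\infty)$, each agreeing with $l^i(0)+ct^2$ near $0$ and with a fixed quadratic outside a compact set, satisfying $l^i(t)>t$, and cofinal from above for the pointwise order on all eventually-quadratic functions $>t$; then $L^i(t,q,v):=l^i(|v|)$. Your formula $L^i_0(q,v)=\sqrt{|v|^2+\delta_i^2}+\tfrac{\epsilon_i}{2}|v|^2$ realizes the same idea concretely without invoking cofinality. It is smooth at $v=0$ (no need for the quadratic-near-$0$ condition), and although the $\sqrt{\cdot}$ term is not eventually quadratic, it has bounded second derivative, so (L2) still holds. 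Both proofs then wave at the nondegeneracy step, and your explicit choice of perturbation $\tau_i(t,q)$ is a reasonable way to be more concrete there.

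There are two places where your argument is stated too loosely, though both are easily repaired. First, in verifying (3) you split $T^*M$ into $DT^*M$ and its complement: the $\sup_{DT^*M}H<0$ vs.\ $\inf H^i_0=-\delta_i$ comparison covers the closed disk bundle, and the quadratic-coefficient comparison covers $|p|$ large, but neither as written covers a collar $\{1<|p|<1+\eta\}$, where $H^i_0$ is still near $-\delta_i$ and $C(1+|p|^2)$ is positive. The fix is to note that $H<0$ on a fixed neighbourhood $\{|p|\le\rho\}$, $\rho>1$, of $DT^*M$ (continuity and compactness), use the $\inf/\sup$ argument there, and apply the quadratic comparison only for $|p|\ge\rho$ where $(|p|-1)^2\ge(1-1/\rho)^2|p|^2$ gives a uniform lower bound $H^i_0\ge\tfrac{(1-1/\rho)^2}{2\epsilon_i}|p|^2-\delta_i$. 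Second, in (2) you verify $\mathcal{A}_{L^i_0}(\gamma)\searrow\len(\gamma)$ but the claim needs $L^i$, not $L^i_0$, and the equality $\bigcup_i\{\mathcal{A}_{L^i}<a\}=\Lambda_M^{<a}$ requires both inclusions. The inclusion $\subseteq$ needs $\mathcal{A}_{L^i}\ge\len$, i.e.\ $L^i\ge|v|$, which survives the perturbation because your constraint $\|\tau_i\|_\infty<g_i=\inf(L^i_0-L^{i+1}_0)\le\inf(L^i_0-|v|)$; the inclusion $\supseteq$ needs $\|\tau_i\|_\infty\to 0$, which you should impose explicitly (it is free, since genericity allows arbitrarily small $\tau_i$). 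Neither point is a real gap, but both should be said.

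Finally, you assert that a generic time-dependent potential $\tau_i\in C^\infty(S^1\times M)$ renders all $1$-periodic solutions of the Euler--Lagrange equation nondegenerate. This is the right kind of perturbation to break the $S^1$-symmetry of an autonomous system, and I believe it is true in the class at hand, but it is a nontrivial transversality statement and deserves a reference (or at least a pointer to the corresponding fact for Hamiltonian perturbations). The paper is equally terse here, simply invoking that (L0), (H0) are achieved by $C^\infty$-small perturbations, so this is a shared omission rather than a defect unique to your argument.
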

\begin{proof}
Since the nondegenerate conditions (L0), (H0) are achieved by $C^\infty$ small perturbations, 
it is enough to construct a sequence $(L^i)_i$ which satisfies (2), (3) and 
\begin{enumerate}
\item[(1)':] For any integer $i$, $L^i$ satisfies (L1), (L2) and its Fenchel dual $H^i$ satisfies (H1), (H2).
\end{enumerate}
Take a sequence $l^1>l^2> \cdots $ of $C^\infty([0,\infty))$ with the following properties:
\begin{itemize}
\item Each $l^i$ is an increasing and stricly convex function.
\item For each $l^i$, there exists $c>0$ such that $l^i(t)=l^i(0)+ ct^2$ for sufficiently small $t \ge 0$. 
\item For each $l^i$, there exists a quadratic function $q^i$ such that $\supp\,(l^i-q^i)$ is compact. 
\item For each $l^i$, $l^i(t) >t$ for any $t \in [0,\infty)$, 
\item If $l \in C^\infty([0,\infty))$ satisfies $l(t) > t$ for any $t \in [0,\infty)$ and 
there exists a quadratic function $q$ such that $\supp\,(l-q)$ is compact, 
$l>l_i$ for sufficently large $i$. 
\end{itemize}
Then, it is easy to check that a sequence $(L^i)_i$, defined by $L^i(t,q,v):= l^i(|v|)$ satisfies the condition.
\end{proof} 

Finally we prove Theorem \ref{thm:loop}.
Take a sequence $(L^i)_i$ as in Lemma \ref{lem:sequence} and let $(H^i)_i$ be its Fenchel dual. 
Then, Theorem \ref{thm:AS} and Proposition \ref{prop:AS2} imply that there exsits a natural isomorphism
\[
\varinjlim_{i \to \infty} H_*\bigl(\{\mca{A}_{L^i}<a\}\bigr) \cong \varinjlim_{ i \to \infty} \HF_*^{<a}(H^i).
\]
(2) in Lemma \ref{lem:sequence} shows that the left hand side is isomorphic to $H_*(\Lambda^{<a}_M)$. 
(3) in Lemma \ref{lem:sequence}  and Proposition \ref{prop:HFinAS4} show that the right hand side is isomorphic to $\HF_*^{<a}(DT^*M)$.
Hence we have obtained an isomorphism $H_*(\Lambda^{<a}_M) \cong \HF_*^{<a}(DT^*M)$.
Remark \ref{rem:alpha} shows that there exists an isomorphism 
$H_*(\Lambda^{<a,\alpha}_M) \cong \HF_*^{<a,\alpha}(DT^*M)$ for any homotopy class $\alpha$ of free loops on $M$. 
Theorem \ref{thm:loop} (2) follows from Theorem \ref{thm:AS} (2).
Theorem \ref{thm:loop} (3) follows from Theorem \ref{thm:AS3}. \qed

\section{Quantitative refinement of the main result}
It is likely that Theorem \ref{thm:loop} (3) can be refined in the following form: 
\begin{conj}
For any closed oriented Riemannian manifold $M$ and $0<a,b \le \infty$, the following diagram commutes:
\[
\xymatrix{
\HF^{<a}_i(DT^*M) \otimes \HF^{<b}_j(DT^*M)\ar[r]\ar[d]_{\cong}&\HF^{<a+b}_{i+j-n}(DT^*M)\ar[d]^{\cong} \\
H_i(\Lambda^{<a}_M) \otimes H_j(\Lambda^{<b}_M) \ar[r]& H_{i+j-n}(\Lambda^{<a+b}_M)
}.
\]
The vertical arrows are isomorphisms in Theorem \ref{thm:loop} (1), 
the top arrow is the pair-of-pants product, 
and the bottom arrow is the loop product. 
\end{conj}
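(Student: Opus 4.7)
The plan is to establish a filtered refinement of Theorem \ref{thm:AS3} at the chain level for each Hamiltonian in a suitable cofinal sequence, then pass to the direct limit as in section 6.3.

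Fix a sequence $L^0 > L^1 > \cdots$ as in Lemma \ref{lem:sequence}, with Fenchel duals $H^0, H^1, \ldots$. I would first arrange, possibly after a further small perturbation, that $H^i * H^i$ also satisfies (H0), (H1), (H2); this is achievable by imposing a mild smallness condition on $\partial_t H^i$ near $t = 0, 1$ (so that (P0) holds in the sense of section 2.3) combined with generic transversality. Theorem \ref{thm:AS3} is proved in \cite{AS2} by counting maps from a Y-shaped Riemann surface into $T^*M$ solving the Floer equation, with two half-infinite Morse-trajectory inputs (through $\pi_M \circ u$ meeting unstable manifolds) and one Floer orbit output. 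This defines a chain map
\[
\Upsilon^i : \CM_*(L^i) \otimes \CM_*(L^i) \to \CF_*(H^i * H^i)
\]
which is shown in \cite{AS2} to be chain homotopic to the composition of a chain-level loop product with the Abbondandolo--Schwarz isomorphism $\psi^i$ of Theorem \ref{thm:AS}.

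The first key step is to verify that $\Upsilon^i$ respects the action filtration. Concretely, for each $u$ in the Y-shape moduli space with inputs $\gamma_1, \gamma_2$ and output $z$, the standard energy identity (using Stokes and the Fenchel duality $L^i(t,q,v) + H^i(t,q,p) = p(v)$ at Legendre-dual pairs) gives
\[
\mca{A}_{L^i}(\gamma_1) + \mca{A}_{L^i}(\gamma_2) - \mca{A}_{H^i * H^i}(z) \;=\; E(u) \;\ge\; 0.
\]
This is the same kind of identity already used in the proof of Theorem \ref{thm:AS} and in Lemma \ref{lem:convexity2}. Granting it, $\Upsilon^i$ restricts to a chain map on filtered complexes $\CM^{<a}_*(L^i) \otimes \CM^{<b}_*(L^i) \to \CF^{<a+b}_*(H^i * H^i)$. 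The chain homotopy that realizes the compatibility with the loop product in \cite{AS2} is built from analogous parametrized moduli spaces, whose energy identities provide the same action bound; hence the homotopy itself is filtered, yielding a filtered analogue of Theorem \ref{thm:AS3} at each level $i$.

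The second step is to take the direct limit over $i$. By Proposition \ref{prop:AS2} and its counterpart on the Floer side (via Proposition \ref{prop:HFinAS3}), the filtered chain-level comparisons are compatible with the Morse monotonicity maps induced by $L^i > L^{i+1}$ and the Floer monotonicity maps induced by $H^i < H^{i+1}$, and one checks directly that the Y-shape chain maps intertwine these (reusing the energy identity for the parameter-dependent moduli space interpolating between $(L^i,H^i)$ and $(L^{i+1},H^{i+1})$). Passing to $\varinjlim_i$ and invoking Lemma \ref{lem:sequence} (2) and Proposition \ref{prop:HFinAS4} identifies the resulting commutative diagram with the one in the conjecture, after matching the pair-of-pants product from section 2.3 (through the identification in Proposition \ref{prop:HFinAS4}) and the loop product from section 3.1.

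The main obstacle will be the first step. The chain homotopy of \cite{AS2} between the loop-product/$\psi$-composition and $\Upsilon^i$ passes through several intermediate moduli spaces, notably those obtained by degenerating the conformal structure near the singular point $P$ of the pair-of-pants surface and by inserting Morse flow segments on the $M$-factor. Each such intermediate complex must be endowed with a compatible action filtration and the corresponding energy identity checked. A secondary technical point is ensuring that $\{H^i * H^i\}_i$ remains cofinal and monotone in the admissible class of section 6.1, so that the direct-limit identification with $\HF^{<a+b}_*(DT^*M)$ goes through exactly as in Proposition \ref{prop:HFinAS4}.
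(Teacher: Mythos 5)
The paper does not prove this statement: it appears in section~7 under the header ``Quantitative refinement of the main result'' and is explicitly labelled a \textbf{Conjecture}, with the surrounding text reading ``It is likely that Theorem~\ref{thm:loop}~(3) can be refined in the following form\dots'' and ``Once this is established, same arguments as the proof of Theorem~\ref{thm:main} imply\dots.'' So there is no proof in the paper to compare your proposal against; what you have written is an attempt at an open problem the author leaves unresolved.

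As for the proposal itself: the outline (produce filtered chain-level versions of the maps and homotopies in \cite{AS2} for each $H^i$ in the cofinal family of Lemma~\ref{lem:sequence}, then pass to the direct limit as in section~6.3) is the natural route, and is presumably what the author had in mind when posing the conjecture. But the proposal does not close the gap that makes this a conjecture rather than a theorem. Two issues are substantive. First, your displayed ``energy identity''
\[
\mca{A}_{L^i}(\gamma_1)+\mca{A}_{L^i}(\gamma_2)-\mca{A}_{H^i*H^i}(z)=E(u)\ge 0
\]
is not an identity. The Legendre--Fenchel inequality $p(v)\le L^i(t,q,v)+H^i(t,q,p)$ is strict off the Legendre graph, and the boundary values $u(0,\cdot)$ of a Y-shape solution need not lie on that graph; what one actually obtains is a chain of inequalities (monotonicity of $\mca{A}_{H^i*H^i}$ along the Floer half-cylinder, then Fenchel at $s=0$, then monotonicity of $\mca{A}_{L^i}$ along the Morse flow). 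The sign happens to be the right one, so this is a curable imprecision, but it matters because the second and more serious point depends on exactly how these inequalities behave for the \emph{intermediate} moduli spaces. Second, you assert that ``the chain homotopy that realizes the compatibility with the loop product in \cite{AS2} is built from analogous parametrized moduli spaces, whose energy identities provide the same action bound; hence the homotopy itself is filtered.'' This is precisely the missing content. The chain homotopies in \cite{AS2} pass through moduli spaces with varying conformal structure near the nodal point $P$, and through hybrid configurations containing finite Morse flow segments on $\Lambda_M$; the action estimate for these does \emph{not} follow formally from the Y-shape estimate and must be established separately, and it is exactly here that the Fenchel inequality must be used with care (the unfiltered comparison in \cite{AS2} itself was later found to require nontrivial repair on this point). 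Flagging this as ``the main obstacle'' and then assuming it is not a proof; it is the statement of what remains to be proved, which is why the paper records the claim as a conjecture.

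A lesser but real gap: Lemma~\ref{lem:sequence} produces autonomous $L^i(t,q,v)=l^i(|v|)$, which are then perturbed to achieve (L0)/(H0). You need the perturbed $H^i$ to simultaneously satisfy (P0)--(P2) for $H^i*H^i$ \emph{and} remain cofinal and monotone in the class $\mca{H}$ of Proposition~\ref{prop:HFinAS4}; the sentence about ``mild smallness of $\partial_t H^i$ near $t=0,1$ combined with generic transversality'' gestures at this but would have to be checked, since the monotonicity $H^i<H^{i+1}$ and the condition (P2) on distinct $1$-periodic orbits of $H^i$ and $H^i$ are not automatic after independent generic perturbations.
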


Once this is established, same arguments as the proof of Theorem \ref{thm:main} imply the following quantitative refinement of Theorem \ref{thm:main}:

\begin{conj}
Let $M$ be a closed oriented Riemannian manifold, $\alpha$ be a nontrivial homotopy class of free loops on $M$, and $a>0$. 
If $\ev: \Lambda^{<a,\alpha} \to M$ has a smooth section, then $c_{\HZ}(DT^*M,\omega_M) \le 2a$. 
\end{conj}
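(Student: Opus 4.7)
My plan is to imitate the proof of Theorem \ref{thm:main}, upgrading each use of Corollary \ref{cor:key} to a filtered analogue whose existence follows from the assumed Conjecture. The first step is a filtered version of Lemma \ref{lem:key}: if $s : M \to \Lambda^{<a,\alpha}_M$ is a smooth section of $\ev$, then $s_*[M] \circ \bar{s}_*[M] = c_*[M]$ in $H_n(\Lambda^{<2a}_M)$. The transversality of $s \times \bar{s}$ to $\Theta^{a,a}_M$ is unchanged, and the explicit null-homotopy $K(p,t)$ constructed in the proof of Lemma \ref{lem:key} runs through loops of length at most $2\len(s(p)) < 2a$, so it stays inside $\Lambda^{<2a}_M$. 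Combining this identity with the assumed Conjecture and Corollary \ref{cor:loop}, I obtain $x \in \HF^{<a,\alpha}_n(DT^*M)$ and $y \in \HF^{<a,\bar{\alpha}}_n(DT^*M)$ with $x * y = F_{2a} \in \HF^{<2a}_n(DT^*M)$; by Corollary \ref{cor:loop2} the target is nonzero, so $x, y \ne 0$ as well.

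For the contradiction argument, suppose $c_{\HZ}(DT^*M,\omega_M) > 2a$. Since $\Spec(DT^*M,\lambda_M)$ is nowhere dense I may enlarge $a$ infinitesimally and assume $a, 2a \notin \Spec$; pick $H \in C_0^\infty(\interior DT^*M)$ with $\min H < -2a$ such that every nonconstant orbit of $X_H$ has period $> 1$. I would then choose $\nu \in C^\infty([1,\infty))$ with asymptotic slope $a_\nu$ slightly larger than $a$ (and $a_\nu, 2a_\nu \notin \Spec$) so that $S(\nu) < -\min H/2$; this is feasible because $-\min H/2 > a$, and $S(\nu)$ can be made arbitrarily close to $a_\nu$ by placing the transition of $\nu$ from $0$ to slope $a_\nu$ on a short interval near $r = 1$. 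With a bump $\chi \in C^\infty(S^1)$ supported away from $0 = 1$ and $\int_{S^1}\chi = 1$, I consider $H_{\chi,\nu}$ and $H_{0,\nu}$, both of asymptotic slope $a_\nu$.

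Applying the Step 4 homotopy argument of Proposition \ref{prop:criterion} to the class $x$ in place of $F_{a_H}$ yields $\rho(H_{\chi,\nu}:x) = \rho(H_{1,\nu}:x)$. By the choice of $H$, any $\gamma \in \mca{P}(H_{1,\nu})$ with $[\gamma] = \alpha$ and $\gamma(S^1) \subset DT^*M$ would be a nonconstant period-$1$ orbit of $X_H$, a contradiction; any such orbit in the cone has action at most $S(\nu) < -\min H/2$. Hence $\rho(H_{\chi,\nu}:x) < -\min H/2$, and symmetrically $\rho(H_{0,\nu}:y) < -\min H/2$ (no constant loop can represent the nontrivial class $\bar{\alpha}$, so every relevant orbit sits in the cone). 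Lemma \ref{lem:spectral2}(3) then gives
\[
\rho\bigl(H_{\chi,\nu} * H_{0,\nu} : x * y\bigr) \le \rho(H_{\chi,\nu}:x) + \rho(H_{0,\nu}:y) < -\min H.
\]
Since $H_{\chi,\nu} * H_{0,\nu} = H_{\eta, 2\nu}$ for a suitable $\eta$ with $\int_{S^1}\eta = 1$, and $x * y = F_{2a}$ pushes to $F_{2a_\nu}$ under the monotonicity map $\HF^{<2a}_n \to \HF^{<2a_\nu}_n$, the left hand side equals $\rho(H_{\eta, 2\nu})$; but Proposition \ref{prop:criterion} applied to $H$ and $2\nu$ forces $\rho(H_{\eta, 2\nu}) = -\min H$, a contradiction.

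The main obstacle is the input Conjecture itself, whose proof would require a careful chain-level refinement of the Abbondandolo--Schwarz isomorphisms of Section 6 tracking the action filtration through the pair-of-pants / loop-product comparison in \cite{AS2}. Granting it, the only mildly subtle points in the argument above are the filtered Lemma \ref{lem:key} (verifying that the canonical null-homotopy stays within the correct length bound) and the calibration of $a_\nu$ and $S(\nu)$ so that both $\rho(H_{\chi,\nu}:x)$ and $\rho(H_{0,\nu}:y)$ remain strictly below $-\min H/2$ per factor.
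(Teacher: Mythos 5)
Your proposal is correct and matches what the paper intends: for this statement the paper offers no separate argument beyond the sentence that, granting the first Conjecture, it follows ``by the same arguments as the proof of Theorem \ref{thm:main}.'' You have spelled out those arguments, and the key new ingredient you isolate --- the filtered version of Lemma \ref{lem:key}, i.e.\ that $s_*[M]\circ \bar s_*[M]=c_*[M]$ already in $H_n(\Lambda^{<2a}_M)$ because the explicit null-homotopy $K(p,t)$ traverses loops of length $\le 2\,\len(s(p))<2a$ --- is exactly what is needed and you verify it correctly. Combined with the filtered isomorphism of Theorem \ref{thm:loop}(1) (which is what really supplies $x\in\HF^{<a,\alpha}_n$ and $y\in\HF^{<a,\bar\alpha}_n$ from $s_*[M]$ and $\bar s_*[M]$; the input Conjecture by itself only gives commutativity with the filtered product) and Corollary \ref{cor:loop}, you get $x*y=F_{2a}$, and the rest is Claim \ref{claim:main} with the numerology shifted to the given $a$.

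Two small points worth tidying rather than changing: (i) once you replace $a$ by a nearby $a'$ with $a',2a'\notin\Spec(DT^*M,\lambda_M)$ (legitimate, since $\Lambda^{<a,\alpha}_M\subset\Lambda^{<a',\alpha}_M$ and it suffices to bound $c_{\HZ}$ by $2a'$ for all such $a'$), there is no further need to take $a_\nu$ strictly larger than $a'$ --- you can take the asymptotic slope equal to $a'$, exactly as in the proof of Theorem \ref{thm:main}, and thereby avoid having to push $x,y$ forward to $\HF^{<a_\nu}$ and re-check nonvanishing; (ii) if you do keep $a_\nu>a$, you should say explicitly that the images of $x,y$ in $\HF^{<a_\nu}$ are nonzero because their product maps to $F_{2a_\nu}\ne 0$. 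Neither affects correctness.
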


\end{document}